\crefname{equation}{}{}
\newtheorem{theorem}{Theorem}[section]
\newtheorem{lemma}[theorem]{Lemma}
\newtheorem{conjecture}[theorem]{Conjecture}
\newtheorem{proposition}[theorem]{Proposition}
\newtheorem{corollary}[theorem]{Corollary}
\theoremstyle{definition}\newtheorem{definition}[theorem]{Definition}
\theoremstyle{definition}\newtheorem{example}[theorem]{Example}
\theoremstyle{definition}\newtheorem{remark}[theorem]{Remark}
\theoremstyle{definition}
\DeclareMathOperator{\rank}{rank}
\newcommand{\diff}{\mathrm{d}}
\colorlet{colbg}{white}
\colorlet{colfg}{black}
\colorlet{colgraphv}{colfg!75!colbg}
\colorlet{colgraphe}{colfg!55!colbg}
\colorlet{colG}{DarkSeaGreen}
\definecolor{colR}{HTML}{CC6677}
\definecolor{colO}{HTML}{DDCC77}
\definecolor{colB}{HTML}{6699CC}
\colorlet{colY}{Gold!90!black}
\colorlet{colGray}{white!60!black}
\colorlet{col1}{colR}
\colorlet{col2}{colG}
\colorlet{col3}{colGray}
\colorlet{col4}{colO}
\colorlet{col5}{colB}
\colorlet{colr1}{colR}
\colorlet{colr2}{colB}
\colorlet{colr3}{colO}
\colorlet{colr4}{colG}
\colorlet{colr5}{colGray}
\colorlet{colr6}{Tan}
\colorlet{colr7}{Plum}
\colorlet{colr8}{LightSkyBlue}
\colorlet{colr9}{LightGreen}
\colorlet{cola}{colfg!75!colbg}
\tikzstyle{vertex}=[fill=colgraphv,circle,inner sep=0pt, minimum size=4pt]
\tikzstyle{edge}=[line width=1.5pt,colgraphe]
\tikzstyle{axes}=[draw=cola,-{Latex[round,width=3pt]}]
\tikzstyle{aline}=[draw=cola]
\tikzstyle{bline}=[draw=cola!10!colbg]
\tikzstyle{alabelsty}=[cola,font=\scriptsize]
\tikzstyle{labelsty}=[font=\scriptsize]
\tikzstyle{slabelsty}=[font=\tiny]
\newcommand{\bnr}[2]{\tikz[baseline=3pt]{\fill[#2] (0,0)rectangle(#1,0.45);\node[anchor=west] at (0,0.225) {#1};}}
\newcommand{\bnrsm}[3]{\tikz[baseline=3pt]{\fill[#3] (0,0)rectangle(#1*#2-#2,0.45);\node[anchor=west] at (0,0.225) {#1};}}
\begin{document}

\title{The number of realisations of a rigid graph in Euclidean and spherical geometries}

\date{October 11, 2023}

\author{Sean Dewar\thanks{
School of Mathematics, University of Bristol, Bristol, UK. E-mail: \texttt{sean.dewar@bristol.ac.uk}}\,  and
Georg Grasegger\thanks{
Johann Radon Institute for Computational and Applied Mathematics (RICAM), Austrian Academy of Sciences, Linz, Austria. E-mail: \texttt{georg.grasegger@ricam.oeaw.ac.at}}
}

\maketitle

\begin{abstract}
	A graph is \emph{$d$-rigid} if for any generic realisation of the graph in $\mathbb{R}^d$ (equivalently, the $d$-dimensional sphere $\mathbb{S}^d$), there are only finitely many non-congruent realisations in the same space with the same edge lengths.
	By extending this definition to complex realisations in a natural way,
	we define $c_d(G)$ to be the number of equivalent $d$-dimensional complex realisations of a $d$-rigid graph $G$ for a given generic realisation,
	and $c^*_d(G)$  to be the number of equivalent $d$-dimensional complex spherical realisations of $G$ for a given generic spherical realisation.
	Somewhat surprisingly, these two realisation numbers are not always equal.
	Recently developed algorithms for computing realisation numbers determined that the inequality $c_2(G) \leq c_2^*(G)$ holds for any minimally 2-rigid graph $G$ with 12 vertices or less.
	In this paper we confirm that, for any dimension $d$, the inequality $c_d(G) \leq c_d^*(G)$ holds for every $d$-rigid graph $G$.
	This result is obtained via new techniques involving coning,
	the graph operation that adds an extra vertex adjacent to all original vertices of the graph.
\end{abstract}

\section{Introduction}\label{sec:intro}

A (finite simple) graph is said to be \emph{$d$-rigid} if every generic realisation of the graph in $d$-dimensional Euclidean space is \emph{rigid},
i.e., shares edge-lengths with at most finitely many other realisations in the same space modulo isometries.
A $d$-rigid graph is \emph{minimally $d$-rigid} if removing any edge of the graph forms a graph that is not $d$-rigid.
Given a $d$-rigid graph, we would wish to know how many possible edge length equivalent realisations exist for any given generic realisation solely from the structural properties of the graph.
This is unfortunately not possible for most graphs as the number of equivalent realisations differs between different generic realisations;
for example, see \Cref{fig:real}.
As with many problems in algebraic geometry, the solution is to extend the problem to allow complex solutions.
By doing so,
we can concretely define the number of equivalent complex realisations (modulo congruence) of a generic $d$-dimensional complex realisation of a graph $G$.
We refer to this number as the \emph{$d$-realisation number of $G$} and denote it by $c_d(G)$.
(See \Cref{def:count} for a rigorous definition of the concept.)

Here we must make an important technical point:
\textbf{we use the definition of a graph's $d$-realisation number given by Jackson and Owen \cite{JacksonOwen}, and as such we consider reflections of a realisation to be congruent also};
for example, the complete graph with $d+1$ vertices has a $d$-realisation number of 1.
The variant of $d$-realisation number used by Borcea and Streinu \cite{BorceaStreinu} and Capco et al.~\cite{PlaneCount} does for algebraic reasons count reflections, and so is exactly double the $d$-realisation number used here.
We have opted for the former definition of a $d$-realisation number since it preserves an important property of congruence:
any two equal-size ordered sets with the same pairwise distances between points are congruent.
We urge any reader who is cross-referencing with multiple sources to be careful about this technical point,
especially since many algorithms for computing $d$-realisation numbers use the latter definition (e.g., \cite{PlaneCount,SphereCount}).

\begin{figure}[ht]
 \centering
 \begin{tikzpicture}
    \begin{scope}
        \node[vertex] (1) at (0,0) {};
        \node[vertex] (2) at (0,1) {};
        \node[vertex] (3) at (-0.75,1.5) {};
        \node[vertex] (4) at (1.25,1.5) {};
        \path[name path=circ1] (3) circle[radius=1.5cm];
        \path[name path=circ2] (4) circle[radius=1.25cm];
        \path[name intersections={of=circ1 and circ2, by=x}];
        \node[vertex] (5) at (x) {};
        \draw[edge] (1)edge(2) (1)edge(3) (1)edge(4) (2)edge(3) (2)edge(4) (3)edge(5) (4)edge(5);
    \end{scope}
    \begin{scope}[xshift=3cm]
        \node[vertex] (1) at (0,0) {};
        \node[vertex] (2) at (0,1) {};
        \node[vertex] (3) at (-0.75,1.5) {};
        \node[vertex] (4) at (-1.25,1.5) {};
        \path[name path=circ1] (3) circle[radius=1.5cm];
        \path[name path=circ2] (4) circle[radius=1.25cm];
        \path[name intersections={of=circ1 and circ2, by=x}];
        \node[vertex] (5) at (x) {};
        \draw[edge] (1)edge(2) (1)edge(3) (1)edge(4) (2)edge(3) (2)edge(4) (3)edge(5) (4)edge(5);
    \end{scope}
    \begin{scope}[xshift=4cm]
     \draw[dashed,white!75!black] (0,-0.25)--(0,3);
    \end{scope}
    \begin{scope}[xshift=6cm]
        \node[vertex] (1) at (0,0) {};
        \node[vertex] (2) at (0,1) {};
        \node[vertex] (3) at (-0.75,1.5) {};
        \node[vertex] (4) at (-1.25,1.5) {};
        \path[name path=circ1] (3) circle[radius=0.75cm];
        \path[name path=circ2] (4) circle[radius=1cm];
        \path[name intersections={of=circ1 and circ2, by=x}];
        \node[vertex] (5) at (x) {};
        \draw[edge] (1)edge(2) (1)edge(3) (1)edge(4) (2)edge(3) (2)edge(4) (3)edge(5) (4)edge(5);
    \end{scope}
    \begin{scope}[xshift=8cm]
        \node[vertex] (1) at (0,0) {};
        \node[vertex] (2) at (0,1) {};
        \node[vertex] (3) at (-0.75,1.5) {};
        \node[vertex] (4) at (1.25,1.5) {};
        \draw[white!50!black,dotted] (3) circle[radius=0.75cm];
        \draw[white!50!black,dotted] (4) circle[radius=1cm];
        \draw[edge] (1)edge(2) (1)edge(3) (1)edge(4) (2)edge(3) (2)edge(4);
        \draw[edge,dashed] (3)--++(0.75,0) node (x1) {} (4)--++(-1,0) node (x2) {};
        \node[colR,font=\tiny] at ($(x1)!0.5!(x2)$) {\faBolt};
    \end{scope}
 \end{tikzpicture}
 \caption{Two realisations of the same graph on 5 vertices with different numbers of equivalent real realisations. The realisation on the left has 4 non-congruent equivalent real realisations (two of which are shown), whilst the one on the right has only 2 due to the edge lengths.}
 \label{fig:real}
\end{figure}
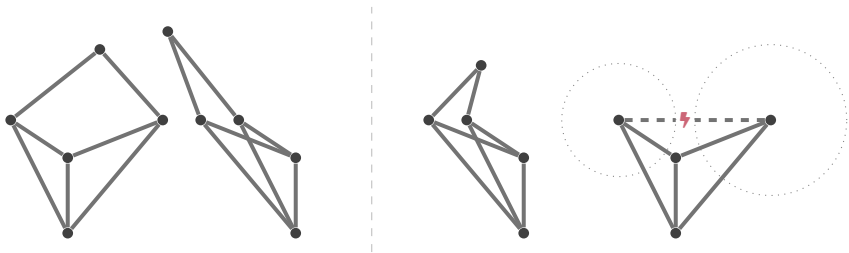

Via tools such as \emph{coning} --- adding a vertex to a graph adjacent to all other vertices, a process that preserves rigidity between dimensions --- Whiteley \cite{coning} proved that a graph is rigid when embedded generically on the $d$-dimensional sphere if and only if it is $d$-rigid.
Because of this equivalence, it is natural to ask how many equivalent spherical realisations exist for any given generic spherical realisation of a graph.
Yet again we are required to extend to complex solutions, where we also consider realisations on the complexification of the sphere.
We thus define the \emph{spherical $d$-realisation number} of a graph $G$, here denoted by $c^*_d(G)$, to be the number of equivalent complex spherical realisations (modulo congruence) of a generic $d$-dimensional complex spherical realisation of a graph $G$.
(See \Cref{def:countsphere} for a rigorous definition of the concept.)
As noted in \Cref{rem:hyperbolic},
this number is equal to the analogous realisation number for hyperbolic space,
and so can also be considered to be the non-Euclidean $d$-realisation number of a graph.

In recent years, deterministic algorithms have also been constructed for computing $c_2(G)$ \cite{PlaneCount} and $c_2^*(G)$ \cite{SphereCount} when the graph $G$ is a minimally 2-rigid.
It is also relatively easy to compute the values $c_1(G)$ and $c_1^*(G)$ for any graph $G$ (see \Cref{p:1d,p:1dsame}).
This is, unfortunately, where the good news stops:
when $d >2$, there exist no current feasible deterministic algorithms for computing either $c_d$ or $c_d^*$ for general graphs.
One probabilistic method for computing $c_d(G)$ and $c_d^*(G)$ when $G$ is $d$-rigid involves choosing a random realisation for the graph and applying Gr\"{o}bner basis computational techniques to the resulting algebraic solution set.
Whilst this algorithm can be used reliably to obtain a lower bound on the $d$-realisation number of a graph,
it is not deterministic and usually extremely slow (see \cite[Section 5]{PlaneCount} for computation speed comparisons).
Upper bounds can also be determined using mixed volume techniques \cite{Steffens2010} or multihomogenous B\'{e}zout bounds \cite{Bartzos2020}.

\subsection{Our contributions}

When computing $c_2$ and $c^*_2$ for various minimally 2-rigid graphs,
it was observed that the two numbers occasionally will differ;
see \Cref{fig:threeprismplane,fig:threeprismsphere} for the smallest 2-rigid graph where $c_2$ is strictly less than $c_2^*$.
We also observed that the opposite never held for any of the graphs whose realisation numbers were computed:
to be exact, for every minimally 2-rigid graph with at most 12 vertices, it was computed that the $2$-realisation number $c_2$ is never greater than the spherical $2$-realisation number $c_2^*(G)$:
this observation can be achieved by combining implementations of the algorithms \cite{ZenodoAlg,SphereAlg} and the data set of all minimally 2-rigid graphs with at most 12 vertices that can be found at \cite{ZenodoData}.
Similarly, randomized experiments with higher numbers of vertices also exhibited the exact same behaviour.
Our main contribution of this paper is proving that this observation is indeed true for any graph in any dimension.

  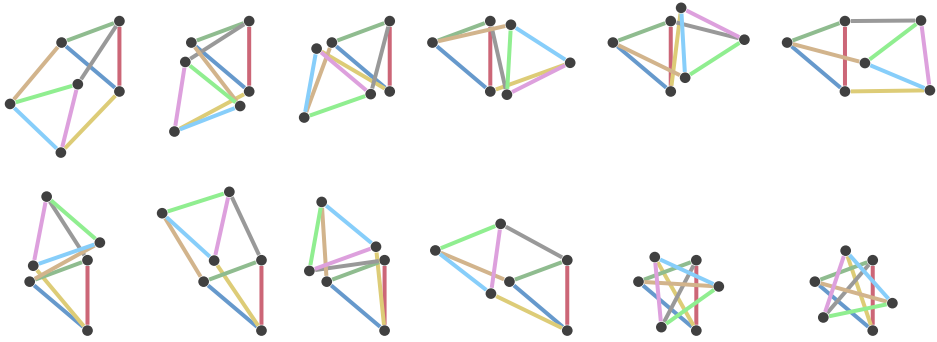
\begin{figure}[t]
    \centering
    \begin{tabular}{cccccc}
      \begin{tikzpicture}[scale=1]
    \draw[colbg] ($(0, -0.8098)-(0,0.3)$)--($(0, 1.1092)+(0,0.3)$);
	\node[vertex] (1) at (0., 0.) {};
	\node[vertex] (2) at (0., 0.93333) {};
	\node[vertex] (3) at (-0.76102, 0.64873) {};
	\node[vertex] (4) at (-0.77023, -0.80987) {};
	\node[vertex] (5) at (-0.54744, 0.09649) {};
	\node[vertex] (6) at (-1.43841, -0.16506) {};
	\draw[edge,colr1] (1)edge(2);
	\draw[edge,colr2] (1)edge(3);
	\draw[edge,colr3] (1)edge(4);
	\draw[edge,colr4] (2)edge(3);
	\draw[edge,colr5] (2)edge(5);
	\draw[edge,colr6] (3)edge(6);
	\draw[edge,colr7] (4)edge(5);
	\draw[edge,colr8] (4)edge(6);
	\draw[edge,colr9] (5)edge(6);
\end{tikzpicture}
&
\begin{tikzpicture}[scale=1]
	\draw[colbg] ($(0, -0.8098)-(0,0.3)$)--($(0, 1.1092)+(0,0.3)$);
	\node[vertex] (1) at (0., 0.) {};
	\node[vertex] (2) at (0., 0.93333) {};
	\node[vertex] (3) at (-0.76102, 0.64873) {};
	\node[vertex] (4) at (-0.98344, -0.53101) {};
	\node[vertex] (5) at (-0.84035, 0.39129) {};
	\node[vertex] (6) at (-0.1186, -0.19294) {};
	\draw[edge,colr1] (1)edge(2);
	\draw[edge,colr2] (1)edge(3);
	\draw[edge,colr3] (1)edge(4);
	\draw[edge,colr4] (2)edge(3);
	\draw[edge,colr5] (2)edge(5);
	\draw[edge,colr6] (3)edge(6);
	\draw[edge,colr7] (4)edge(5);
	\draw[edge,colr8] (4)edge(6);
	\draw[edge,colr9] (5)edge(6);
\end{tikzpicture}
&
\begin{tikzpicture}[scale=1]
	\draw[colbg] ($(0, -0.8098)-(0,0.3)$)--($(0, 1.1092)+(0,0.3)$);
	\node[vertex] (1) at (0., 0.) {};
	\node[vertex] (2) at (0., 0.93333) {};
	\node[vertex] (3) at (-0.76102, 0.64873) {};
	\node[vertex] (4) at (-0.96216, 0.56867) {};
	\node[vertex] (5) at (-0.25021, -0.03486) {};
	\node[vertex] (6) at (-1.12529, -0.34546) {};
	\draw[edge,colr1] (1)edge(2);
	\draw[edge,colr2] (1)edge(3);
	\draw[edge,colr3] (1)edge(4);
	\draw[edge,colr4] (2)edge(3);
	\draw[edge,colr5] (2)edge(5);
	\draw[edge,colr6] (3)edge(6);
	\draw[edge,colr7] (4)edge(5);
	\draw[edge,colr8] (4)edge(6);
	\draw[edge,colr9] (5)edge(6);
\end{tikzpicture}
&
\begin{tikzpicture}[scale=1]
	\draw[colbg] ($(0, -0.8098)-(0,0.3)$)--($(0, 1.1092)+(0,0.3)$);
	\node[vertex] (1) at (0., 0.) {};
	\node[vertex] (2) at (0., 0.93333) {};
	\node[vertex] (3) at (-0.76102, 0.64873) {};
	\node[vertex] (4) at (1.05091, 0.38043) {};
	\node[vertex] (5) at (0.21886, -0.04242) {};
	\node[vertex] (6) at (0.27118, 0.88467) {};
	\draw[edge,colr1] (1)edge(2);
	\draw[edge,colr2] (1)edge(3);
	\draw[edge,colr3] (1)edge(4);
	\draw[edge,colr4] (2)edge(3);
	\draw[edge,colr5] (2)edge(5);
	\draw[edge,colr6] (3)edge(6);
	\draw[edge,colr7] (4)edge(5);
	\draw[edge,colr8] (4)edge(6);
	\draw[edge,colr9] (5)edge(6);
\end{tikzpicture}
&
\begin{tikzpicture}[scale=1]
	\draw[colbg] ($(0, -0.8098)-(0,0.3)$)--($(0, 1.1092)+(0,0.3)$);
	\node[vertex] (1) at (0., 0.) {};
	\node[vertex] (2) at (0., 0.93333) {};
	\node[vertex] (3) at (-0.76102, 0.64873) {};
	\node[vertex] (4) at (0.13693, 1.10923) {};
	\node[vertex] (5) at (0.96907, 0.68656) {};
	\node[vertex] (6) at (0.18946, 0.18214) {};
	\draw[edge,colr1] (1)edge(2);
	\draw[edge,colr2] (1)edge(3);
	\draw[edge,colr3] (1)edge(4);
	\draw[edge,colr4] (2)edge(3);
	\draw[edge,colr5] (2)edge(5);
	\draw[edge,colr6] (3)edge(6);
	\draw[edge,colr7] (4)edge(5);
	\draw[edge,colr8] (4)edge(6);
	\draw[edge,colr9] (5)edge(6);
\end{tikzpicture}
&
\begin{tikzpicture}[scale=1]
	\draw[colbg] ($(0, -0.8098)-(0,0.3)$)--($(0, 1.1092)+(0,0.3)$);
	\node[vertex] (1) at (0., 0.) {};
	\node[vertex] (2) at (0., 0.93333) {};
	\node[vertex] (3) at (-0.76102, 0.64873) {};
	\node[vertex] (4) at (1.11754, 0.01527) {};
	\node[vertex] (5) at (0.99997, 0.94116) {};
	\node[vertex] (6) at (0.26236, 0.37709) {};
	\draw[edge,colr1] (1)edge(2);
	\draw[edge,colr2] (1)edge(3);
	\draw[edge,colr3] (1)edge(4);
	\draw[edge,colr4] (2)edge(3);
	\draw[edge,colr5] (2)edge(5);
	\draw[edge,colr6] (3)edge(6);
	\draw[edge,colr7] (4)edge(5);
	\draw[edge,colr8] (4)edge(6);
	\draw[edge,colr9] (5)edge(6);
\end{tikzpicture}
\\
\begin{tikzpicture}[scale=1]
	\node[vertex] (1) at (0., 0.) {};
	\node[vertex] (2) at (0., 0.93333) {};
	\node[vertex] (3) at (-0.76102, 0.64873) {};
	\node[vertex] (4) at (-0.71395, 0.85989) {};
	\node[vertex] (5) at (-0.53774, 1.77644) {};
	\node[vertex] (6) at (0.16251, 1.16661) {};
	\draw[edge,colr1] (1)edge(2);
	\draw[edge,colr2] (1)edge(3);
	\draw[edge,colr3] (1)edge(4);
	\draw[edge,colr4] (2)edge(3);
	\draw[edge,colr5] (2)edge(5);
	\draw[edge,colr6] (3)edge(6);
	\draw[edge,colr7] (4)edge(5);
	\draw[edge,colr8] (4)edge(6);
	\draw[edge,colr9] (5)edge(6);
\end{tikzpicture}
&
\begin{tikzpicture}[scale=1]
	\node[vertex] (1) at (0., 0.) {};
	\node[vertex] (2) at (0., 0.93333) {};
	\node[vertex] (3) at (-0.76102, 0.64873) {};
	\node[vertex] (4) at (-0.62275, 0.92807) {};
	\node[vertex] (5) at (-0.42254, 1.83968) {};
	\node[vertex] (6) at (-1.30675, 1.55608) {};
	\draw[edge,colr1] (1)edge(2);
	\draw[edge,colr2] (1)edge(3);
	\draw[edge,colr3] (1)edge(4);
	\draw[edge,colr4] (2)edge(3);
	\draw[edge,colr5] (2)edge(5);
	\draw[edge,colr6] (3)edge(6);
	\draw[edge,colr7] (4)edge(5);
	\draw[edge,colr8] (4)edge(6);
	\draw[edge,colr9] (5)edge(6);
\end{tikzpicture}
&
\begin{tikzpicture}[scale=1]
	\node[vertex] (1) at (0., 0.) {};
	\node[vertex] (2) at (0., 0.93333) {};
	\node[vertex] (3) at (-0.76102, 0.64873) {};
	\node[vertex] (4) at (-0.11333, 1.11189) {};
	\node[vertex] (5) at (-0.98985, 0.79121) {};
	\node[vertex] (6) at (-0.82741, 1.70547) {};
	\draw[edge,colr1] (1)edge(2);
	\draw[edge,colr2] (1)edge(3);
	\draw[edge,colr3] (1)edge(4);
	\draw[edge,colr4] (2)edge(3);
	\draw[edge,colr5] (2)edge(5);
	\draw[edge,colr6] (3)edge(6);
	\draw[edge,colr7] (4)edge(5);
	\draw[edge,colr8] (4)edge(6);
	\draw[edge,colr9] (5)edge(6);
\end{tikzpicture}
&
\begin{tikzpicture}[scale=1]
	\node[vertex] (1) at (0., 0.) {};
	\node[vertex] (2) at (0., 0.93333) {};
	\node[vertex] (3) at (-0.76102, 0.64873) {};
	\node[vertex] (4) at (-1.00434, 0.49035) {};
	\node[vertex] (5) at (-0.87642, 1.41487) {};
	\node[vertex] (6) at (-1.73559, 1.06263) {};
	\draw[edge,colr1] (1)edge(2);
	\draw[edge,colr2] (1)edge(3);
	\draw[edge,colr3] (1)edge(4);
	\draw[edge,colr4] (2)edge(3);
	\draw[edge,colr5] (2)edge(5);
	\draw[edge,colr6] (3)edge(6);
	\draw[edge,colr7] (4)edge(5);
	\draw[edge,colr8] (4)edge(6);
	\draw[edge,colr9] (5)edge(6);
\end{tikzpicture}
&
\begin{tikzpicture}[scale=1]
	\node[vertex] (1) at (0., 0.) {};
	\node[vertex] (2) at (0., 0.93333) {};
	\node[vertex] (3) at (-0.76102, 0.64873) {};
	\node[vertex] (4) at (-0.54683, 0.97474) {};
	\node[vertex] (5) at (-0.46006, 0.04545) {};
	\node[vertex] (6) at (0.29587, 0.58472) {};
	\draw[edge,colr1] (1)edge(2);
	\draw[edge,colr2] (1)edge(3);
	\draw[edge,colr3] (1)edge(4);
	\draw[edge,colr4] (2)edge(3);
	\draw[edge,colr5] (2)edge(5);
	\draw[edge,colr6] (3)edge(6);
	\draw[edge,colr7] (4)edge(5);
	\draw[edge,colr8] (4)edge(6);
	\draw[edge,colr9] (5)edge(6);
\end{tikzpicture}
&
\begin{tikzpicture}[scale=1]
	\node[vertex] (1) at (0., 0.) {};
	\node[vertex] (2) at (0., 0.93333) {};
	\node[vertex] (3) at (-0.76102, 0.64873) {};
	\node[vertex] (4) at (-0.35606, 1.05941) {};
	\node[vertex] (5) at (-0.65033, 0.17369) {};
	\node[vertex] (6) at (0.25864, 0.36343) {};
	\draw[edge,colr1] (1)edge(2);
	\draw[edge,colr2] (1)edge(3);
	\draw[edge,colr3] (1)edge(4);
	\draw[edge,colr4] (2)edge(3);
	\draw[edge,colr5] (2)edge(5);
	\draw[edge,colr6] (3)edge(6);
	\draw[edge,colr7] (4)edge(5);
	\draw[edge,colr8] (4)edge(6);
	\draw[edge,colr9] (5)edge(6);
\end{tikzpicture}
    \end{tabular}
    \caption{12 equivalent real realisations of the 3-prism graph.}
    \label{fig:threeprismplane}
  \end{figure}
  \begin{figure}[t]
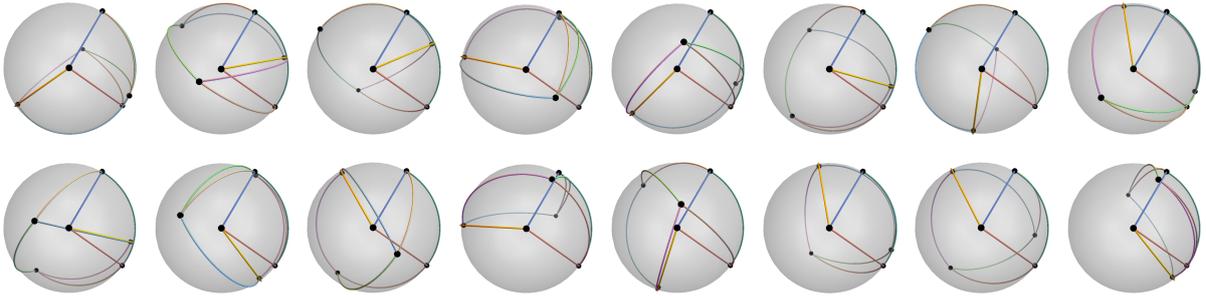

    \centering
      \foreach \i in {1,2,...,8} {\includegraphics[width=1.75cm]{RealSphereRealization\i.png}\hspace{0.25cm}}\\[2ex]
      \foreach \i in {9,10,...,16} {\includegraphics[width=1.75cm]{RealSphereRealization\i.png}\hspace{0.25cm}}\vspace{-0.25cm}
    \caption{16 equivalent real realisations of the 3-prism graph on the sphere.}
    \label{fig:threeprismsphere}
  \end{figure}

\begin{theorem}\label{t:sphereplane}
	For any graph $G$ and any positive integer $d$,
	we have $c_{d}(G) \leq c_d^*(G)$.
\end{theorem}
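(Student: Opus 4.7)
The strategy is to link the Euclidean and spherical realisation numbers through the cone graph $CG$, by establishing the identity $c_{d+1}(CG) = c_d^*(G)$ together with the inequality $c_d(G) \leq c_{d+1}(CG)$; combining these yields $c_d(G) \leq c_d^*(G)$. For the identity, I would start from a generic realisation of $CG$ in $\mathbb{C}^{d+1}$ and use translations to fix the cone vertex at the origin. The cone-edge equations then read $v_i \cdot v_i = r_i^2$ (with $r_i$ the cone-edge length) and the $G$-edge equations become $v_i \cdot v_j = (r_i^2 + r_j^2 - d_{ij}^2)/2$. Rescaling $w_i := v_i / r_i$ sends each $w_i$ onto the complex unit sphere and converts the $G$-edge data into prescribed inner products $w_i \cdot w_j$. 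Because the stabiliser of the origin in $\mathrm{Iso}(\mathbb{C}^{d+1})$ is $O(d+1, \mathbb{C})$, which is also the natural congruence group of the complex sphere, passing to the quotient gives a bijection between realisation classes of $CG$ in $\mathbb{C}^{d+1}$ and spherical realisation classes of $G$, yielding the identity.

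For the inequality $c_d(G) \leq c_{d+1}(CG)$, the plan is to exhibit a special edge-length assignment of $CG$ whose fibre contains at least $c_d(G)$ realisation classes, and then invoke upper semicontinuity of the number of distinct points in a flat family of finite schemes. I would construct a one-parameter family of $CG$-edge-lengths along which the cone vertex recedes to infinity along a fixed direction of $\mathbb{C}^{d+1}$: in the limit the spheres defined by the cone-edges flatten into parallel hyperplanes, and the constraints on the remaining vertices degenerate to the usual Euclidean edge conditions within a hyperplane isomorphic to $\mathbb{C}^d$. Each distinct equivalence class of Euclidean realisations of $G$ should then appear as a distinct class in the special fibre, giving at least $c_d(G)$ classes there; by semicontinuity this is bounded above by $c_{d+1}(CG)$.

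The main obstacle is making this specialisation argument algebraically precise. One must verify flatness of the family so that semicontinuity applies, confirm that distinct Euclidean realisations of $G$ do not collapse together in the special fibre, and track the isometry-group action, which itself degenerates as the cone vertex moves off to infinity. A natural framework is projective compactification inside $\mathbb{P}^{d+1}$: there the sphere is a smooth quadric, Euclidean $\mathbb{C}^d$ arises from a specific degeneration of this quadric at the hyperplane at infinity, and the coning identity together with the semicontinuity inequality can be assembled in a uniform algebraic setting.
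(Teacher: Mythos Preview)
Your overall architecture matches the paper exactly: reduce to $c_d(G)\le c_{d+1}(G*o)$ via the identity $c_{d+1}(G*o)=c_d^*(G)$ (your rescaling argument is essentially the paper's Lemma~\ref{kl:proj}), and prove the inequality by letting the cone vertex run off to infinity so that the cone-edge constraints degenerate to parallel hyperplane constraints. The difference lies in how the degeneration step is made rigorous. You propose flatness of a one-parameter family and semicontinuity of the number of geometric points; the paper instead builds an explicit polynomial map $g:Z_{G,d+1}\times\mathbb{C}\to\mathbb{C}^E\times\mathbb{C}^{V\setminus\{v_1\}}\times\mathbb{C}$ (the last coordinate is your parameter $r$, with $r=0$ the limit) and proves a purely local lower-semicontinuity lemma (Lemma~\ref{l:diffmaprealisations}) via the holomorphic inverse function theorem. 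The obstacle you flag---that the isometry action degenerates and that one must check nothing bad happens at the special fibre---is exactly what the paper handles: when $G$ is not minimally rigid the target of $g$ is too large and the image is a proper subvariety, so naive flatness fails. The paper's substitute is to verify that at the special point the Jacobian of $g$ has maximal rank \emph{and} that every point in the special fibre has the same left kernel (Lemma~\ref{l:gmapflat}\ref{l:gmapflat4},\ref{l:gmapflat5}); this guarantees that the image is locally a manifold near the special value and that all preimages deform independently. Your projective/flat-family framework would work, but you would need an equivalent check (reducedness of the special fibre inside the image variety), and the left-kernel condition is precisely the concrete form that check takes.
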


To prove this result, we first show an equivalence between the spherical $d$-realisation number of a graph $G$ and the $(d+1)$-realisation number of the coned graph $G*o$, the graph formed by adding a new vertex adjacent to all other vertices of $G$.

\begin{theorem}\label{t:conecount}
	Let $d$ be a positive integer and let $G*o$ be a coning of a graph $G$.
	Then $c^*_{d}(G) = c_{d+1}(G*o)$.
\end{theorem}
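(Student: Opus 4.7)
The plan is to relate generic realisations of $G*o$ in $\mathbb{C}^{d+1}$ to pairs consisting of a spherical realisation of $G$ together with a tuple of distances from the cone apex, and then to compare the generic degrees of the two edge-length maps. Writing $n := |V(G)|$, the first step is to place the cone vertex $o$ at the origin of $\mathbb{C}^{d+1}$, so that the reduced configuration space becomes $Y := (\mathbb{C}^{d+1})^n / O(d+1,\mathbb{C})$ and the edge-length map $\lambda \colon Y \to \mathbb{C}^{n+|E(G)|}$ has generic degree $c_{d+1}(G*o)$. Each point $p_v$ in a generic configuration admits a factorisation $p_v = r_v q_v$ with $q_v \in \mathbb{S}^d$ and $r_v \in \mathbb{C}$, uniquely determined up to simultaneously flipping the signs of $r_v$ and $q_v$; this yields a natural algebraic cover $\Phi \colon X \to Y$ of generic degree $2^n$, where $X := \bigl((\mathbb{S}^d)^n \times \mathbb{C}^n\bigr) / O(d+1,\mathbb{C})$ with the group acting only on the $q$-factor.

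In these coordinates the cone edge $ov$ has squared length $r_v^2$ and the graph edge $uv \in E(G)$ has squared length $r_v^2 + r_u^2 - 2 r_v r_u \langle q_v, q_u\rangle$. Accordingly, I would factor the composition $\lambda \circ \Phi$ through the variety $Z := \mathbb{C}^n \times \mathbb{C}^{|E(G)|}$ via the two maps
\[
\mu \colon \bigl((q_v), (r_v)\bigr) \longmapsto \bigl((r_v)_v, (\langle q_v, q_u\rangle)_{uv \in E(G)}\bigr)
\]
and
\[
\nu \colon \bigl((r_v), (\gamma_{uv})\bigr) \longmapsto \bigl((r_v^2)_v, (r_v^2 + r_u^2 - 2 r_v r_u \gamma_{uv})_{uv}\bigr),
\]
so that the identity $\nu \circ \mu = \lambda \circ \Phi$ holds by direct substitution. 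A short computation shows that $\nu$ has generic degree $2^n$: every target tuple $((\alpha_v), (\beta_{uv}))$ has exactly two square roots $r_v = \pm\sqrt{\alpha_v}$ per vertex, each choice uniquely determining the $\gamma_{uv}$ through $\gamma_{uv} = (\alpha_v + \alpha_u - \beta_{uv})/(2 r_v r_u)$. On the other hand, $\mu$ has generic degree $c^*_d(G)$, because the $r_v$-coordinates decouple completely and the residual map on $(\mathbb{S}^d)^n / O(d+1,\mathbb{C})$ sending $(q_v)$ to its tuple of edge inner products differs from the standard spherical edge-length map only by the affine reparametrisation $\beta_{uv} = 2 - 2\gamma_{uv}$.

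Multiplicativity of generic degrees under composition then yields $2^n \cdot c^*_d(G) = \deg(\mu)\deg(\nu) = \deg(\Phi)\deg(\lambda) = 2^n \cdot c_{d+1}(G*o)$, and cancelling $2^n$ gives the theorem. The principal obstacle will be justifying dominance and equidimensionality of all four maps so that ``generic degree'' unambiguously agrees with the size of a generic fibre, and checking that genericity propagates correctly across the diagram, so that a generic point of $Z$ corresponds to generic spherical edge lengths on one side and generic Euclidean edge lengths of $G*o$ on the other. These technicalities should reduce to the compatibility of dimensions enforced by Whiteley's coning theorem, which guarantees that $G$ is $d$-rigid if and only if $G*o$ is $(d+1)$-rigid, ensuring no spurious components arise in any of the four varieties involved.
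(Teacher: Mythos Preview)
Your approach is correct and rests on the same core observation as the paper: the scaling decomposition $p_v = r_v q_v$ with $q_v$ on the complex sphere, which expresses the projective invariance of the realisation count for coned frameworks. The paper packages this differently. Rather than quotienting by $O(d+1,\mathbb{C})$ and running a commutative-square degree argument, it works with explicit linear slices $X_{G*o,d+1}$ and $Y_{G,d}$ transverse to the group orbits (up to a known power of $2$, cf.\ Lemmas~\ref{l:xgd} and~\ref{l:ygd}) and proves a pointwise fibre bijection, Lemma~\ref{kl:proj}, showing $|\tilde f_{G*o,d+1}^{-1}(\tilde f_{G*o,d+1}(p'))| = 2\,|\tilde s_{G,d}^{-1}(\tilde s_{G,d}(p))|$ for \emph{any} $p\in Y_{G,d}$ and any scaling tuple $r$. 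The genericity propagation you flag as the principal obstacle is then handled concretely by exhibiting a dominant morphism $\phi: Y_{G,d}\times\mathbb{C}^V \to X_{G*o,d+1}$ of equal-dimensional irreducible varieties and invoking the open mapping property to ensure $\phi$ carries the generic locus of the source into the generic locus of the target. Your formulation is conceptually cleaner and absorbs the slice bookkeeping into the single factor $2^n$ appearing on both sides, but at the cost of working with quotients that are not literally affine varieties; the paper's slice approach trades elegance for having all objects be honest subvarieties of affine space, so that Theorem~\ref{t:deg} applies directly. The two arguments are reparametrisations of the same computation.
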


Our proof for \Cref{t:conecount} hinges on the observation that realisation numbers for frameworks are, in the special case of coned frameworks, projectively invariant.
\Cref{t:sphereplane}, however, requires a more specialised approach to graph coning.
We actually prove that $c_{d}(G) \leq c_{d+1}(G*o)$,
since \Cref{t:conecount} then implies the required inequality.
Our technique now is as follows.
We construct a very specific space of $(d+1)$-dimensional realisations of the coned graph $G*o$ which include realisations where the cone vertex $o$ is mapped to a point at infinity.
For this special class of realisations, the distance constraints stemming from edges connecting the cone vertex $o$ to each vertex $v$ of $G$ are instead linear constraints that force the vertices of $G$ to lie in a family of parallel hyperplanes.
This allows us to embed our $d$-dimensional realisations of $G$ into our new larger space of $(d+1)$-dimensional realisations of $G*o$.
From this, we can then approximate any framework $(G,p)$ by a sequence of frameworks $(G*o,p')$ in our larger space such that the required geometric information regarding realisation spaces (i.e., an upper bound on the number of points) is preserved.

The idea of considering a linear constraint to be a point at infinity is not new;
see for example \cite{EJNSTW19}.
Our contribution is to construct a well-defined set of tools that allows us to approximate $d$-dimensional general realisations of a graph $G$ by $(d+1)$-dimensional coned realisations of the cone graph $G*o$ in a way that preserves some geometric information about their realisation spaces:
in our particular case, the geometric information that is preserved is an upper bound on the size of the set.
Since prior techniques with coning have been focused on the preservation of linear information of realisations (for example, the space of self-stresses of the realisation),
such results have not been previously possible.
We believe our new enhanced coning technique has the potential to applied to other generic invariants related to framework rigidity (see \Cref{rem:alginfo} for further discussion).

\subsection{Structure of paper}

Our paper is structured as follows.
In \Cref{sec:prelim} we cover all the necessary background surrounding the topics of rigidity, spherical rigidity and coning.
In \Cref{sec:count,sec:countsphere} we provide rigorous definitions for the $d$-realisation number and spherical $d$-realisation number respectively.
Whilst such ideas have been alluded to in prior papers (for example \cite{PlaneCount,JacksonOwen}),
they have previously restricted themselves to 2-dimensional spaces only.
Due to the various technicalities that occur when moving between our various higher-dimensional geometries,
we have opted to include thorough proofs of all necessary background results.
In \Cref{sec:cone} and \Cref{sec:flat} we prove \Cref{t:conecount} and \Cref{t:sphereplane} respectively.
In \Cref{sec:cone} we also prove that repeatedly coning a graph will in some sense stabilise its realisation numbers (\Cref{t:conecount2}),
which can be utilised to construct infinite families of graphs for each dimension with realisation numbers that can be computed deterministically.
We conclude the paper by discussing various computational results we have obtained in \Cref{sec:compute}.





\section{Preliminary results for rigidity theory}\label{sec:prelim}

In this section we cover the necessary background results for discussing rigidity in geometries based over the real numbers.
Many of these concepts are extended to geometries based over the complex numbers in \Cref{sec:count,sec:countsphere}.

\subsection{Euclidean space rigidity}

A \emph{realisation} of a (finite simple) graph $G=(V,E)$ in $\mathbb{R}^d$ is a map $p:V \rightarrow \mathbb{R}^d$,
and the linear space of all realisations of a graph is denoted by $(\mathbb{R}^d)^V$.
A realisation $p$ is said to be \emph{generic} if the set of coordinates of $p$ forms an algebraic independent set of $d|V|$ elements.\footnote{The requirement that the algebraic independent set has $d|V|$ elements is stated to avoid the possibility of identical coordinates.}
A graph-realisation pair $(G,p)$ is said to be a \emph{$d$-dimensional framework}.
Two frameworks $(G,p)$ and $(G,q)$ in $\mathbb{R}^d$ are said to be \emph{equivalent} if (given $\|\cdot\|$ is the standard Euclidean norm) $\|p_v - p_w\| = \|q_v-q_w\|$ holds for every edge $vw \in E$.
A framework $(G,p)$ in $\mathbb{R}^d$ is now said to be \emph{rigid} if the following holds for some $\varepsilon >0$:
if $(G,q)$ is an equivalent framework in $\mathbb{R}^d$ such that $\|p_v-q_w\|<\varepsilon$ for all $v \in V$,
then there exists an isometry $f: \mathbb{R}^d \rightarrow \mathbb{R}^d$ such that $q_v = f(p_v)$ for all $v \in V$.

Determining whether a framework is rigid is NP-Hard for $d \geq 2$ \cite{Abbot}.
To combat this, we construct the \emph{rigidity matrix} $R(G,p)$ of a given framework $(G,p)$.
This is the $|E| \times d|V|$ matrix with the row labelled $vw \in E$ given by
\begin{align*}
	[ 0 \quad \cdots  \quad 0  \quad \overbrace{p_v-p_w}^{v} \quad 0  \quad \cdots  \quad 0  \quad \overbrace{p_w-p_v}^{w} \quad 0  \quad \cdots  \quad 0].
\end{align*}
A framework $(G,p)$ is said to be \emph{regular} if its rigidity matrix has maximal rank,
i.e., for any $d$-dimensional realisation $q$ of $G$ we have $\rank R(G,p) \geq \rank R(G,q)$.
Any framework $(G,p)$ with a surjective rigidity matrix (i.e., $\rank (G,p)=|E|$) is automatically regular:
in such a case we say that the framework $(G,p)$ is \emph{independent}.
Any generic framework must also be regular, since the non-regular frameworks form an algebraic set defined by rational-coefficient polynomials.

\begin{theorem}[\cite{AsimowRothI}]\label{t:asimowroth}
	Let $(G,p)$ be a regular framework in $\mathbb{R}^d$.
	Then the following properties are equivalent.
	\begin{enumerate}
		\item \label{t:asimowroth1} $(G,p)$ is rigid.
		\item \label{t:asimowroth2} Either $G$ has at least $d$ vertices and $\rank R(G,p) = d|V| - \binom{d+1}{2}$, or $G$ is a complete graph.
	\end{enumerate}
\end{theorem}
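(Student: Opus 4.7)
The plan is to analyse the fibres of the squared edge-length map and compare them with Euclidean congruence orbits. Let $f\colon (\mathbb{R}^d)^V \to \mathbb{R}^E$ be defined coordinate-wise by $f(q)_{vw} = \|q_v-q_w\|^2$; a direct computation shows that its differential at $q$ is $2R(G,q)$, so the Jacobian rank of $f$ at a realisation equals the rank of its rigidity matrix. Since matrix rank is lower semicontinuous and $(G,p)$ is regular (hence maximal), $\rank R(G,\cdot)$ is locally constant on some open neighbourhood $U$ of $p$; by the constant rank theorem, $f^{-1}(f(p)) \cap U$ is therefore a smooth submanifold of $(\mathbb{R}^d)^V$ of dimension $d|V| - \rank R(G,p)$.

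Let $O(p)$ denote the orbit of $p$ under the Euclidean group $E(d) \cong O(d) \ltimes \mathbb{R}^d$ acting componentwise on realisations. As the orbit of a smooth Lie group action, $O(p)$ is a smooth submanifold of $(\mathbb{R}^d)^V$ of dimension $\binom{d+1}{2} - \dim\mathrm{Stab}_{E(d)}(p)$, and it is contained in $f^{-1}(f(p))$ by isometry invariance of edge lengths. Unwinding the definitions, $(G,p)$ being rigid is precisely the statement that $O(p)$ coincides with $f^{-1}(f(p))$ in some neighbourhood of $p$; since both sides are smooth submanifolds with one contained in the other, this is equivalent to equality of local dimensions
\[
	d|V| - \rank R(G,p) \;=\; \binom{d+1}{2} - \dim\mathrm{Stab}_{E(d)}(p).
\]

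The argument then concludes by a case analysis on the stabilizer dimension. If $G$ is complete, equivalent frameworks have identical pairwise distances and are therefore congruent, so $(G,p)$ is automatically rigid and the ``$G$ complete'' clause of (ii) is satisfied. Otherwise we assume $|V| \geq d$ and show that regularity forces the affine span of $\{p_v : v \in V\}$ to attain its maximal possible dimension $\min(|V|-1, d)$: a realisation confined to a proper affine subspace of dimension $j$ has rigidity matrix rank bounded by the maximal rigidity rank in $\mathbb{R}^j$, and for $|V| \geq d+1$ the quantity $j|V|-\binom{j+1}{2}$ is strictly increasing in $j$ up to $j=d$, so confinement strictly decreases the available rank. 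When $|V| \geq d+1$ the affine span is therefore all of $\mathbb{R}^d$, $\mathrm{Stab}_{E(d)}(p)$ is discrete, and the displayed equation reduces to $\rank R(G,p) = d|V| - \binom{d+1}{2}$, as needed. When $|V|=d$ the affine span is necessarily a hyperplane; the stabilizer still contains only the reflection across that hyperplane, so the same equation becomes $\rank R(G,p) = \binom{d}{2}$, which can hold only if $G=K_d$ and hence again falls under the complete-graph clause.

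The principal obstacle I anticipate is the affine-span argument, namely checking that a regular realisation on $|V| \geq d$ vertices indeed has maximal affine span; this requires a careful monotonicity comparison between the maximum rigidity ranks in $\mathbb{R}^j$ for varying $j$, together with the observation that a realisation confined to a $j$-dimensional affine subspace is indistinguishable from a $j$-dimensional framework at the level of the rigidity matrix. A more minor technical point is justifying the application of the constant rank theorem from regularity alone, which rests on combining lower semicontinuity of matrix rank with the maximality built into the regularity hypothesis.
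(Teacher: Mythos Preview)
The paper does not supply a proof of this theorem; it is quoted from Asimow--Roth with a citation and used as a black box. Your outline is the classical constant-rank-theorem argument from that source, so there is no meaningful comparison of approaches to make.

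That said, there is a genuine gap in the affine-span step. You assert that regularity forces $\{p_v\}$ to affinely span a flat of maximal dimension, on the grounds that confinement to a $j$-flat bounds $\rank R(G,p)$ by $j|V|-\binom{j+1}{2}$, which is strictly below the corresponding $d$-dimensional bound. But regularity only says the rank is maximal \emph{for this particular graph $G$}; it does not say the rank attains the abstract upper bound. For instance, if $G$ has $d+1$ vertices and a single edge, every realisation with distinct endpoints has rank $1$ and is therefore regular, including collinear ones. So regularity alone does not force full span, and the reduction of your dimension equation to the rank condition is not yet justified.

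The repair is to invoke the extra hypothesis available in each direction. For $(ii)\Rightarrow(i)$, the rank equation $\rank R(G,p)=d|V|-\binom{d+1}{2}$ already exceeds $j|V|-\binom{j+1}{2}$ for any $j<d$ when $|V|\geq d+1$, so full span is immediate and the stabiliser is discrete. For $(i)\Rightarrow(ii)$, rigidity gives your dimension equation with $\dim\mathrm{Stab}_{E(d)}(p)=\binom{d-j}{2}$; combining this with the bound $\rank R(G,p)\leq j|V|-\binom{j+1}{2}$ and simplifying yields $(d-j)|V|\leq(d-j)(j+1)$, which for $j<d$ forces $|V|\leq j+1\leq d$, contradicting $|V|\geq d+1$. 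A secondary omission: your case split (``$G$ complete'' versus ``$|V|\geq d$'') never addresses a non-complete $G$ with fewer than $d$ vertices; the same stabiliser computation with $j=|V|-1$ shows such frameworks are never rigid.
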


Since every generic framework is regular,
\Cref{t:asimowroth} informs us that both rigidity and independence are generic properties.
This motivates the following definitions.

\begin{definition}
	A graph $G$ is said to be \emph{$d$-rigid} (respectively, $d$-independent) if there exists a $d$-dimensional rigid (respectively, independent) generic framework $(G,p)$.
	If $G$ is rigid but $G-e$ is not for each edge $e \in E$,
	then $G$ is said to be \emph{minimally $d$-rigid}.
\end{definition}
%
%

One method that can be used to construct rigid/independent graphs in higher dimensions is via coning.
Specifically:
given a graph $G=(V,E)$,
we define the \emph{cone of $G$ (by the new vertex $o$)} to be the graph $G*o := (V*o, E*o)$ where $o \notin V$, $V*o := V \cup \{o\}$ and 
\begin{align*}
	E*o := E \cup \{ o v : v \in V\}.
\end{align*}
We can now use the next result to move between dimensions whilst preserving rigidity and independence.

\begin{theorem}[{\cite[Theorem 5]{coning}}]\label{t:cone}
	A graph $G=(V,E)$ is $d$-rigid (respectively, $d$-independent) if and only if $G*o$ is $(d+1)$-rigid (respectively, $(d+1)$-independent).
\end{theorem}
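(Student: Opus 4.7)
The plan is to reduce \Cref{t:cone} to a single rank identity between the rigidity matrices of $(G,p)$ and of a specific lift of $(G*o)$, and then translate via the Asimow--Roth criterion (\Cref{t:asimowroth}).

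Given a generic realisation $p\colon V\to\mathbb{R}^d$, I would define the lift $\tilde p\colon V*o\to\mathbb{R}^{d+1}$ by $\tilde p_o = 0$ and $\tilde p_v = (p_v, 1)$ for $v\in V$, and first prove the rank identity
\[
\rank R(G*o,\tilde p) = \rank R(G,p) + |V|.
\]
The verification is a direct block manipulation. Partition the columns of $R(G*o,\tilde p)$ into an $o$-block and $V$-blocks of width $d+1$, and split each block into its first $d$ columns and its last column. Rows indexed by $vw\in E$ contain $(p_v-p_w,0)$ in the $v$-block and its negative in the $w$-block, so they reproduce $R(G,p)$ in the first $d$ columns of the $V$-blocks and vanish everywhere else. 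Rows indexed by $ov$ contain $(-p_v,-1)$ in the $o$-block and $(p_v,1)$ in the $v$-block; their restriction to the last columns of the $V$-blocks is a $|V|\times|V|$ identity matrix. Using these $|V|$ columns to clear the remaining nonzero entries of the $ov$-rows (both the $p_v$ parts in the $v$-blocks and in the $o$-block) separates the matrix into two independent pieces of ranks $\rank R(G,p)$ and $|V|$.

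Since the lift $\tilde p$ is not a generic realisation of $G*o$, the next step is to show that the rank it achieves is actually the generic rank of $G*o$ in $\mathbb{R}^{d+1}$. The cleanest route is via the classical projective invariance of the rigidity matrix: for any invertible projective transformation $T$ of $\mathbb{R}^{d+1}$ defined on each $\tilde p_v$, the rigidity matrices of $(G*o,\tilde p)$ and $(G*o,T\tilde p)$ differ by diagonal row and column rescalings, and hence have equal rank. Choosing $T$ that fixes the cone vertex $\tilde p_o=0$ and sends the hyperplane $\{x_{d+1}=1\}$ onto a generic affine arrangement, one matches $\tilde p$ to an arbitrary generic realisation $q$ with $q_o=0$, so $\rank R(G*o,q) = \rank R(G,p) + |V|$.

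With the rank identity promoted to generic realisations, the two conclusions follow immediately. For independence, $\rank R(G,p)=|E|$ iff $\rank R(G*o,\tilde p)=|E|+|V|=|E(G*o)|$. For rigidity, the arithmetic identity
\[
d|V|-\binom{d+1}{2}+|V| = (d+1)(|V|+1)-\binom{d+2}{2},
\]
a one-line Pascal computation, shows that $R(G,p)$ attains the target rank of \Cref{t:asimowroth} iff $R(G*o,\tilde p)$ attains the corresponding target rank in one higher dimension; the complete-graph and $|V|<d$ exceptions are handled directly using that coning preserves completeness. The main technical obstacle is the projective-invariance step linking $\tilde p$ to a generic realisation of $G*o$, since the lift sits on a proper algebraic subvariety of the full realisation space and carrying the row/column rescaling bookkeeping through in arbitrary dimension is the most delicate part of the argument.
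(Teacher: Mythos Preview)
The paper does not give its own proof of \Cref{t:cone}; it is quoted from Whiteley's paper \cite{coning}. Your outline is essentially the classical argument, and the paper itself uses exactly your projective-invariance step elsewhere: \Cref{p:slide} is the statement that radially rescaling a coned framework with $p_o=\mathbf{0}$ preserves $\rank R(G*o,\cdot)$, citing \cite{Izm09} for the general projective invariance of the rigidity-matrix rank.

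Your block computation of the rank identity $\rank R(G*o,\tilde p)=\rank R(G,p)+|V|$ is correct, as is the arithmetic matching the Asimow--Roth targets. The only genuine wobble is the sentence ``one matches $\tilde p$ to an \emph{arbitrary} generic realisation $q$ with $q_o=0$'': a projective transformation of $\mathbb{R}^{d+1}$ has only $(d+2)^2-1$ parameters, so for $|V|$ large you cannot reach an arbitrary $q$ this way. Fortunately you do not need to. For the forward direction (say $G$ is $d$-rigid), the lift already gives $\rank R(G*o,\tilde p)=(d+1)|V*o|-\binom{d+2}{2}$, which is the Asimow--Roth upper bound, so $\tilde p$ automatically realises the generic rank and no transformation is needed. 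For the converse, run the projection the other way: take a generic $q$, translate so $q_o=\mathbf{0}$, and radially rescale each $q_v$ onto $\{x_{d+1}=1\}$ (possible since $[q_v]_{d+1}\neq 0$ generically); by \Cref{p:slide} the rank is preserved, and you land on some lift $\tilde p$, from which your block identity recovers $\rank R(G,p)$. With that adjustment the argument is complete.
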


\subsection{Spherical space rigidity}

The \emph{$d$-dimensional (unit) sphere} is the set
\begin{align*}
	\mathbb{S}^d := \left\{ x \in \mathbb{R}^{d+1} : \|x\|^2=1 \right\}.
\end{align*}
Similar to the Euclidean case, we define a \emph{$d$-dimensional spherical realisation} to be a pair $(G,p)$ of a graph $G=(V,E)$ and a \emph{$d$-dimensional spherical realisation} $p: V \rightarrow \mathbb{S}^d$.
The notions of equivalence and rigidity can be analogously defined for spherical frameworks by restricting all realisations to be spherical realisations.

An important observation is that any $d$-dimensional spherical realisation of a graph $G$ can (for rigidity purposes at least) be considered to be a $(d+1)$-dimensional realisation of the cone of $G$.
To be more specific:
given a $d$-dimensional spherical framework $(G,p)$,
we define the $(d+1)$-dimensional framework $(G*o,p')$ by setting $p'_v=(p_v,1)$ for all $v \in V$ and $p'_o = \mathbf{0}$.
From this construction it is easy to see that the framework $(G*o,p')$ is rigid if and only if $(G,p)$ is rigid.
Slightly less obviously,
this remains true if we scale each $p_v$ by some positive scalar.

\begin{proposition}\label{p:slide}
	For a graph $G=(V,E)$,
	let $p,q$ be $d$-dimensional realisations of the cone $G*o$ where $p_o=q_o = \mathbf{0}$.
	Further suppose that for each $v \in V$,
	there exists a scalar $r_v >0$ such that $q_v = r_v p_v$.
	Then $(G*o,p)$ is rigid if and only if $(G*o,q)$ is rigid,
	and $\rank R(G*o,p) = \rank R(G*o,q)$.
\end{proposition}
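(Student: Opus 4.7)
The plan is to reduce the entire claim to the rank equality $\rank R(G*o,p) = \rank R(G*o,q)$; the rigidity equivalence will then follow because regularity is by definition the attainment of maximum rank (a function of the graph alone), and Asimow--Roth (\Cref{t:asimowroth}) identifies rigidity with a rank condition for regular frameworks. So the substantive content is the rank equality, which I would prove by exhibiting an explicit isomorphism between the kernels of (appropriate reductions of) the two rigidity matrices.

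First I would perform an invertible linear change of variables on the infinitesimal motion space, replacing $\dot p_v$ by $\eta_v := \dot p_v - \dot p_o$ for each $v \in V$ while keeping $\dot p_o$ as the remaining free variable. Using $p_o = \mathbf{0}$, the cone edge $ov$ becomes the constraint $p_v \cdot \eta_v = 0$, and a non-cone edge $vw \in E$ becomes $(p_v - p_w)\cdot(\eta_v - \eta_w) = 0$. Crucially, $\dot p_o$ no longer appears in any equation, so $\rank R(G*o,p)$ equals the rank of the reduced coefficient matrix $M(p)$ in the variables $(\eta_v)_{v \in V}$ alone; the same reduction applied to $q$ yields a matrix $M(q)$ of the same size, and it suffices to show $\rank M(p) = \rank M(q)$.

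The key step is an algebraic simplification of the non-cone constraint modulo the cone constraints. Expanding $(p_v - p_w)\cdot(\eta_v - \eta_w)$ and using $p_v \cdot \eta_v = p_w \cdot \eta_w = 0$, this equation is equivalent on $\ker M(p)$ to the bilinear identity $p_v \cdot \eta_w + p_w \cdot \eta_v = 0$, and likewise for $q$ after substituting $q_v = r_v p_v$. Now define $\Phi : (\mathbb{R}^d)^V \to (\mathbb{R}^d)^V$ by $\Phi(\eta)_v := r_v \eta_v$; since every $r_v > 0$, $\Phi$ is a linear isomorphism. A direct check shows that under $\Phi$ the cone constraints of $M(p)$ transform to the cone constraints of $M(q)$ by an overall factor of $r_v^2$, and the simplified non-cone constraints transform by a factor of $r_v r_w$, both nonzero, so $\Phi(\ker M(p)) = \ker M(q)$. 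Consequently $\dim \ker M(p) = \dim \ker M(q)$, and since $M(p)$ and $M(q)$ have the same shape this yields $\rank M(p) = \rank M(q)$, giving the desired rank equality.

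The final step is to convert the rank equality into the equivalence of rigidity. Since the maximum rank of the rigidity matrix over all realisations of $G*o$ in $\mathbb{R}^d$ depends only on $G*o$, equal ranks force $(G*o,p)$ and $(G*o,q)$ to be simultaneously regular, and \Cref{t:asimowroth} then makes rigidity equivalent to the rank condition $\rank R = d|V*o| - \binom{d+1}{2}$ (or to $G*o$ being complete), which is now the same for both frameworks. I expect the hard part to be the simplification in the third paragraph: without first killing the diagonal terms $p_v \cdot \eta_v$ and $p_w \cdot \eta_w$ using the cone constraints, the expressions $(p_v - p_w)\cdot(\eta_v - \eta_w)$ and $(r_v p_v - r_w p_w)\cdot(r_v \eta_v - r_w \eta_w)$ are genuinely different quadratic forms and no naive rescaling of columns sends one to the other. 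This simplification is the algebraic shadow of the geometric fact that $p_v$ and $q_v$ lie on a common ray through $p_o = q_o = \mathbf{0}$.
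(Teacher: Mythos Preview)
Your proof of the rank equality is correct and is in fact more self-contained than the paper's, which simply cites the projective invariance of the rigidity matrix rank (Izmestiev). Your kernel isomorphism $\Phi$ together with the row-reduction step (replacing the $vw$-row by the bilinear form $p_v\cdot\eta_w+p_w\cdot\eta_v$ via subtraction of the cone rows) gives a clean elementary argument.

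However, your deduction of the rigidity equivalence from the rank equality has a genuine gap. You correctly observe that equal ranks force $(G*o,p)$ and $(G*o,q)$ to be \emph{simultaneously} regular or simultaneously non-regular, and when both are regular \Cref{t:asimowroth} finishes the job. But the proposition is stated for arbitrary realisations, and when both are non-regular \Cref{t:asimowroth} says nothing: a non-regular framework can be rigid (e.g.\ a collinear $K_3$ in $\mathbb{R}^2$) without satisfying the rank condition, so knowing $\rank R(G*o,p)=\rank R(G*o,q)$ does not let you conclude that rigidity of one implies rigidity of the other. Your argument is purely infinitesimal, and infinitesimal information only determines rigidity at regular points.

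The paper closes this gap by a different route: it observes that the map $\tilde p\mapsto\tilde q$ given by $\tilde q_v=r_v\tilde p_v$ (after translating so that $\tilde p_o=\mathbf 0$) is a homeomorphism which carries the equivalence class of $p$ bijectively onto the equivalence class of $q$ and sends congruent realisations to congruent realisations. This is a statement about actual realisations rather than their linearisations, so it transfers rigidity in both directions regardless of regularity. Your kernel isomorphism $\Phi$ is precisely the derivative of this map, so the fix is natural: promote $\Phi$ to the nonlinear scaling map on realisations and check (as the paper does) that it preserves edge lengths.
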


\begin{proof}
	That $\rank R(G*o,p) = \rank R(G*o,q)$ follows from the observation that the rank of a rigidity matrix is projectively invariant;
	see \cite[Theorem 1]{Izm09} for more details.
	Now choose any $d$-dimensional framework $(G*o,\tilde{p})$ equivalent to $(G*o,p)$ and define $\tilde{q}$ to be the realisation of $G*o$ with $\tilde{q}_v = r_v \tilde{p}_v$ for each $v \in V$.
	For every edge $vw$ of $G$ we have
	\begin{gather*}
		\|\tilde{p}_v\|^2 = \|\tilde{p}_v - \tilde{p}_o\|^2 = \|p_v - p_o\|^2 = \|p_v\|^2, \qquad
		\|\tilde{p}_w\|^2 = \|\tilde{p}_w - \tilde{p}_o\|^2 = \|p_w - p_o\|^2 = \|p_w\|^2,\\		
		\tilde{p}_v \cdot \tilde{p}_v =  \|\tilde{p}_v\|^2 + \|\tilde{p}_w\|^2 - \| \tilde{p}_v - \tilde{p}_w\|^2= \|p_v\|^2 + \|p_w\|^2 - \| p_v - p_w\|^2 = p_v \cdot p_w.
	\end{gather*}
	(Remember that $ov , ow \in E*o$ and $\tilde{p}_o = p_o = \mathbf{0}$.)
	Using the above equalities,
	we see that
	\begin{align*}
		\|\tilde{q}_v-\tilde{q}_w\|^2 &= \|r_v\tilde{p}_v-r_w\tilde{p}_w\|^2 \\
		&= \|r_v\tilde{p}_v\|^2 + \|r_w\tilde{p}_w\|^2 - r_v r_w\tilde{p}_v \cdot \tilde{p}_w \\
		&= \|r_vp_v\|^2 + \|r_wp_w\|^2 - r_v r_wp_v \cdot p_w  \\
		&= \|r_vp_v-r_wp_w\|^2 \\
		&= \|q_v-q_w\|^2,
	\end{align*}
	and hence $(G*o,q)$ and $(G*o,\tilde{q})$ are equivalent.
	From this it follows that $(G*o,p)$ is rigid if and only if $(G*o,q)$ is rigid.
\end{proof}

From a combination of \Cref{t:asimowroth}, \Cref{t:cone} and \Cref{p:slide},
we can now see that a graph is rigid in $d$-dimensional Euclidean space if and only if it is rigid in $d$-dimensional spherical space.

\begin{theorem}\label{t:sphereclassic}
	For any graph $G$, the following properties are equivalent.
	\begin{enumerate}
		\item $G$ is $d$-rigid.
		\item Almost every (i.e., with Lebesgue measure zero complement) $d$-dimensional spherical framework $(G,p)$ is rigid.
	\end{enumerate}
\end{theorem}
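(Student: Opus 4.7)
The plan is to reduce spherical rigidity of $(G,p)$ to Euclidean rigidity of the associated coned framework in $\mathbb{R}^{d+1}$, and then invoke \Cref{t:cone} and \Cref{t:asimowroth}. For a spherical realisation $p: V \to \mathbb{S}^d$ of $G = (V,E)$, let $\bar p$ denote the realisation of $G*o$ in $\mathbb{R}^{d+1}$ defined by $\bar p_o = \mathbf{0}$ and $\bar p_v = p_v$ for all $v \in V$. As already observed in the paragraph preceding \Cref{p:slide}, $(G,p)$ is rigid as a spherical framework if and only if $(G*o, \bar p)$ is rigid as a Euclidean framework.

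The crux is to identify the generic rank of $R(G*o, \cdot)$ on $(\mathbb{R}^{d+1})^{V*o}$ with its maximum over the restricted family $\{\bar p : p \in (\mathbb{S}^d)^V\}$. Since each row of the rigidity matrix depends only on differences of coordinate vectors, the matrix is invariant under translation, so we may restrict attention without loss of generality to realisations $q$ with $q_o = \mathbf{0}$; for such $q$ with all $q_v$ nonzero, \Cref{p:slide} permits a rescaling of each $q_v$ to unit length without changing $\rank R(G*o, q)$, landing us inside the restricted family. Letting $M$ denote the generic rank of $R(G*o, \cdot)$, it follows that the set
\[
Z := \{p \in (\mathbb{S}^d)^V : \rank R(G*o, \bar p) < M\}
\]
is a proper algebraic subvariety of $(\mathbb{S}^d)^V$ and hence has Lebesgue measure zero; for any $p \notin Z$, the framework $(G*o, \bar p)$ is regular in the Euclidean sense.

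For such $p$, \Cref{t:asimowroth} tells us that $(G*o, \bar p)$ is rigid if and only if either $G*o$ is complete or $|V|+1 \geq d+1$ and $M = (d+1)(|V|+1) - \binom{d+2}{2}$; this is precisely the Asimow--Roth criterion, applied to any generic realisation of $G*o$ in $\mathbb{R}^{d+1}$, for $G*o$ to be $(d+1)$-rigid. Combining this with \Cref{t:cone} yields that $(G*o, \bar p)$ is rigid if and only if $G$ is $d$-rigid, and then the spherical--Euclidean correspondence of the first paragraph finishes the theorem. The main technical hurdle is the rank identification of the second paragraph, where \Cref{p:slide} does the heavy lifting in transferring the maximum-rank property from arbitrary Euclidean realisations of $G*o$ to the specific spherical ones $\bar p$; the rest of the argument is a chain of equivalences drawn from the preceding results.
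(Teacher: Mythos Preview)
Your proof is correct and follows essentially the same approach that the paper sketches: combine the spherical--Euclidean correspondence via coning, use \Cref{p:slide} to transfer the generic rigidity-matrix rank of $G*o$ from arbitrary realisations to those of the form $\bar p$, and then apply \Cref{t:asimowroth} and \Cref{t:cone}. The paper merely states that the theorem follows from these three ingredients; your write-up fills in exactly the details one would expect.
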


\begin{remark}
	It was first observed by Pogorelov \cite[Chapter V]{Pog73} that the linear space of infinitesimal motions of a $d$-dimensional spherical framework is isomorphic to the linear space of infinitesimal motions of the $d$-dimensional framework obtained by a central projection to Euclidean space.
	An alternative proof of \Cref{t:sphereclassic} now stems from the spherical analogue to \Cref{t:asimowroth},
	which can be proven in an almost identical way.
\end{remark}

\section{Counting complex realisations}\label{sec:count}

Our aim in this section is to prove that the definition of the $d$-realisation number for a graph alluded to in the introduction can be stated in a rigorous manner (see \Cref{def:count}).

\subsection{Complex rigidity map}

We recall that an algebraic set is a subset $A \subset \mathbb{C}^n$ of common zeroes of an ideal $I \subset \mathbb{C}[X_1,\ldots,X_n]$,
and a variety is an irreducible algebraic set.
If an algebraic set forms a smooth submanifold of $\mathbb{C}^n$ then it is said to be smooth.
We shall refer to a subset $U$ of an algebraic set $A$ as a Zariski closed/open/dense subset if $U$ is a closed/open/dense subset of $A$ with respect to the Zariski topology.

For every point  $x=(x_1,\ldots,x_d) \in \mathbb{C}^d$,
we define $[x]_i := x_i$ for each $i =1,\ldots,d$.
We now also consider realisations in $\mathbb{C}^d$ by extending $(\mathbb{R}^d)^V$ to the set $(\mathbb{C}^d)^V$.
Any realisation in $(\mathbb{R}^d)^V$ is said to be a \emph{real realisation}.
We extend the square of the Euclidean norm to $\mathbb{C}^d$ by defining $\|x\|^2 := \sum_{i=1}^d [x]_i^2$.
We note that this is a quadratic form and not the square of the standard complex norm,
and so can take complex values.
Later we shall also require the bilinear map $x \cdot y := \sum_{i =1}^d [x]_i [y]_i$,
where we notice that $x \cdot x = \| x\|^2$.

For any graph $G=(V,E)$ we define the \emph{complex rigidity map} to be the multivariable map
\begin{align*}
	f_{G,d} : (\mathbb{C}^{d})^V \rightarrow \mathbb{C}^E, ~ p \mapsto \left( \frac{1}{2}\|p_v-p_w\|^2 \right)_{vw \in E}.
\end{align*}
We denote the Zariski closure of the image of $f_{G,d}$ by $\ell_d(G)$.
Since the domain of $f_{G,d}$ is irreducible,
$\ell_d(G)$ is a variety.
Note that two realisations $p,q$ of $G$ in $\mathbb{R}^d$ are equivalent if and only if $f_{G,d}(p)=f_{G,d}(q)$.
Given $O(d, \mathbb{C})$ is the group of $d \times d$ complex-valued matrices $M$ where $M^T M = M M^T = I$,
we define two realisations $p,q \in (\mathbb{C}^{d})^V$ to be \emph{congruent} (denoted by $p \sim q$) if and only if there exists an $A \in O(d, \mathbb{C})$ and $x \in \mathbb{C}^{d}$ so that $p_v = Aq_v + x$ for all $v \in V$.
If the set of vertices of $(G,p)$ affinely span $\mathbb{C}^d$,
we have the following equivalent statement:
two realisations $p,q$ are congruent if and only if $f_{K_V,d}(p) = f_{K_V,d}(q)$ (see \cite[Section 10]{Gortler2014} for more details).
For all $p \in (\mathbb{C}^{d})^V$,
we define 
\begin{align*}
	C_d(G,p) := f^{-1}_{G,d} (f_{G,d}(p))/_\sim
\end{align*}
to be the \emph{realisation space of $(G,p)$}.

Our new definitions might look a little strange at first.
For example,
the elements of $O(d, \mathbb{C})$ are not isometries of $\mathbb{C}^d$ (i.e., they are not unitary matrices),
although $\|Ax\|^2 = \|x\|^2$ for all $x \in \mathbb{C}^d$.
However, our previous definitions of independence and rigidity can be encoded in our new language of morphisms between complex spaces.
We first recall that a morphism $f : X \rightarrow Y$ between algebraic sets $X \subset \mathbb{C}^m$ and $Y \subset \mathbb{C}^n$ (i.e.~the restriction of a polynomial map $\mathbb{C}^m \rightarrow \mathbb{C}^n$) is \emph{dominant} if $Y \setminus f(X)$ is contained in a Zariski closed proper subset of $Y$;
see \Cref{sec:dom} for more details regarding dominant morphisms.

\begin{lemma}\label{l:domind}
	Let $G=(V,E)$ be any graph.
	Then the following are equivalent:
	\begin{enumerate}
		\item \label{l:domind1} $G$ is $d$-independent.
		\item \label{l:domind2} The map $f_{G,d}$ is dominant.
		\item \label{l:domind3} $\ell_d(G) = \mathbb{C}^E$.
	\end{enumerate}
\end{lemma}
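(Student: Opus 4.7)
The plan is to prove (ii) $\iff$ (iii) tautologically from the definition of dominant morphism, and to link both to (i) via a Jacobian computation together with a standard dimension argument.

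First, I would dispose of (ii) $\iff$ (iii) in a single line. By definition, $\ell_d(G)$ is the Zariski closure of $f_{G,d}((\mathbb{C}^d)^V)$ inside $\mathbb{C}^E$, and the definition of dominance given in the excerpt says that $f_{G,d}$ is dominant precisely when $\mathbb{C}^E \setminus f_{G,d}((\mathbb{C}^d)^V)$ is contained in a proper Zariski closed subset of $\mathbb{C}^E$, i.e.\ when its image is Zariski-dense. The two conditions therefore coincide.

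For (i) $\iff$ (iii), the key observation is that the Jacobian of $f_{G,d}$ at a point $p$, taken with respect to the standard coordinates on $(\mathbb{C}^d)^V$, is exactly the rigidity matrix $R(G,p)$. Indeed, the partial derivative of $\tfrac{1}{2}\|p_v-p_w\|^2$ with respect to $[p_v]_i$ equals $[p_v-p_w]_i$, matching the block prescription for the row of $R(G,p)$ indexed by the edge $vw$ exactly. I would then invoke the standard algebraic-geometric fact (to be cited from \Cref{sec:dom}) that if $f\colon X \to Y$ is a morphism from an irreducible variety $X$, then the dimension of the Zariski closure of $f(X)$ equals the generic rank of the Jacobian of $f$, and in particular equals $\max_{x \in X} \rank Jf(x)$. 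Applied here, this yields
\[
\dim \ell_d(G) \;=\; \max_{p \in (\mathbb{C}^d)^V} \rank R(G,p).
\]
Since $\ell_d(G)$ is a subvariety of $\mathbb{C}^E$, the equality $\ell_d(G) = \mathbb{C}^E$ is equivalent to $\dim \ell_d(G) = |E|$, and so equivalent to the existence of some complex $p$ with $\rank R(G,p) = |E|$.

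To close the loop with (i), I would use that the set $\{p : \rank R(G,p) = |E|\}$ is defined by the non-vanishing of a maximal minor of $R(G,p)$, which is a polynomial with integer coefficients in the coordinates of $p$. Hence this set is non-empty in $(\mathbb{C}^d)^V$ iff the polynomial is not identically zero iff it is non-zero at some (equivalently, every) generic real realisation of $G$, which by the definition of regularity and $d$-independence in the excerpt is precisely the statement that $G$ is $d$-independent. I do not anticipate a serious obstacle: the only subtlety is the transfer between a complex maximum-rank point and a real generic independent framework, and this is immediate because the relevant minors of $R(G,p)$ have rational coefficients.
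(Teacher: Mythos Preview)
Your proposal is correct and follows essentially the same approach as the paper: both dispatch (ii)$\iff$(iii) by definition, identify the Jacobian of $f_{G,d}$ with the rigidity matrix, invoke the result from \Cref{sec:dom} (Theorem~\ref{borel91}) to equate dominance with the existence of a full-rank point, and then transfer between complex and real realisations via the fact that the rank conditions are cut out by integer-coefficient polynomials. The only cosmetic difference is that the paper argues the real-complex transfer by observing that a Zariski closed subset of $(\mathbb{C}^d)^V$ containing all of $(\mathbb{R}^d)^V$ must be the whole space, whereas you phrase it in terms of a maximal minor being a nonzero integer polynomial; these are the same argument.
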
 

\begin{proof}
	By the definition of a dominant map, \ref{l:domind2} and \ref{l:domind3} are equivalent.
	The map $f_{G,d}$ is dominant if and only if $\rank \diff f_{G,d}(p) = |E|$ for some $p \in (\mathbb{C}^{d})^V$ (see \Cref{borel91}).
	If $G$ is $d$-independent then there exists a $p \in (\mathbb{R}^{d})^V$ such that $\rank \diff f_{G,d}(p) = \rank R(G,p) = |E|$,
	hence \ref{l:domind1} implies \ref{l:domind2}.
	Suppose that $G$ is not $d$-independent;
	i.\,e., for each $p \in (\mathbb{R}^{d})^V$ we have $\rank \diff f_{G,d}(p) < |E|$.
	Then the set $X := \{ p \in (\mathbb{C}^{d})^V: \rank \diff f_{G,d}(p) < |E| \}$ is a Zariski closed subset of $(\mathbb{C}^{d})^V$ that contains $(\mathbb{R}^{d})^V$.
	Hence, $X = (\mathbb{C}^{d})^V$,
	and \ref{l:domind2} implies \ref{l:domind1} as required.
\end{proof}

Let $G=(V,E)$ be a graph with at least $d+1$ vertices and fix a sequence of $d$ vertices $v_1,\ldots, v_d$.
We now define the algebraic set
\begin{align}\label{eq:xset}
	X_{G,d} := \left\{ p \in (\mathbb{C}^{d})^V : [p_{v_k}]_j=0 \text{ if } j \geq k  \right\}\,.
\end{align}
(At certain points in the paper we use vertices $w_1,\ldots,w_d$ to define $X_{G,d}$,
which can be done by replacing each $v_i$ with $w_i$ in the above definition.)
We further note that $X_{G,d}$ has dimension $d|V| - \binom{d+1}{2}$.
Since $X_{G,d}$ is defined by a set of linear equations,
it is irreducible.
With this, we define the morphism
\begin{align*}
	\tilde{f}_{G,d} : X_{G,d} \rightarrow \ell_d(G), ~ p \mapsto f_{G,d}(p),
\end{align*}
i.\,e., the restriction of $f_{G,d}$ to the domain $X_{G,d}$ and the codomain $\ell_d(G)$.

\begin{lemma}\label{l:rotate}
	Let $G=(V,E)$ be a graph with $|V| \geq d+1$,
	and fix a sequence of $d$ vertices $v_1,\ldots, v_d$.
	For each realisation $p \in (\mathbb{C}^d)^V$,
	define the $(d-1) \times (d-1)$ symmetric matrix
	\begin{align*}
		\mathbb{G}(p) := 		
		\begin{bmatrix}
			(p_{v_2} - p_{v_1})^T \\
			\vdots \\
			(p_{v_d} - p_{v_1})^T
		\end{bmatrix}
		\begin{bmatrix}
			p_{v_2} - p_{v_1} & \cdots & p_{v_d} - p_{v_1}
		\end{bmatrix}.
	\end{align*}
	Then there exists a realisation $q \in X_{G,d}$ congruent to $p$ if $\mathbb{G}(p)$ only has non-zero leading principal minors.\footnote{A \emph{leading principal minor (of order $n$)} of a square matrix is the determinant of the matrix formed by taking the first $n$ rows and columns.}
\end{lemma}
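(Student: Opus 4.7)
The plan is to carry out a complex analogue of Gram--Schmidt. Since congruence includes translations, we may first subtract $p_{v_1}$ from every vertex and assume $p_{v_1}=\mathbf{0}$. Writing $y_k := p_{v_{k+1}}$ for $k=1,\dots,d-1$ and collecting these as the columns of a $d \times (d-1)$ matrix $Y$, one checks directly that $\mathbb{G}(p) = Y^T Y$. The goal is then to exhibit an $A \in O(d,\mathbb{C})$ such that $AY$ is upper triangular in the sense that $[A y_k]_j=0$ whenever $j > k$; the congruent realisation $q_v := A p_v$ will then lie in $X_{G,d}$, since $q_{v_1}=\mathbf{0}$ and each $q_{v_{k+1}}=Ay_k$ has the required vanishing coordinates.

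I would build $A$ in two stages. First, use the hypothesis on leading principal minors to obtain the classical symmetric factorisation $\mathbb{G}(p) = L D L^T$, with $L$ unit lower triangular and $D$ diagonal; the $k$-th entry of $D$ equals the ratio of successive leading principal minors of $\mathbb{G}(p)$ and is therefore non-zero. Since $\mathbb{C}$ is algebraically closed, each entry of $D$ has a square root, so $\mathbb{G}(p) = R^T R$ for an invertible upper triangular $(d-1) \times (d-1)$ matrix $R$. Setting $Q := Y R^{-1}$ then gives a $d \times (d-1)$ matrix with $Q^T Q = I_{d-1}$. Second, extend $Q$ to a full complex orthogonal matrix: the columns of $Q$ span a subspace $U \subset \mathbb{C}^d$ on which the bilinear form $\cdot$ restricts non-degenerately (this is exactly what $Q^T Q = I_{d-1}$ encodes), so its $1$-dimensional orthogonal complement $U^{\perp}$ is non-degenerate and contains a vector $u$ with $u \cdot u = 1$. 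Appending $u$ as a final column produces $\tilde{Q} \in O(d,\mathbb{C})$, and I take $A := \tilde{Q}^T$.

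The computation
\[
AY = \tilde{Q}^T Y = \begin{bmatrix} Q^T Y \\ u^T Y \end{bmatrix} = \begin{bmatrix} R \\ 0 \end{bmatrix}
\]
(using $u^T y_k = 0$ since each $y_k \in U$) then shows that column $k$ of $AY$ is supported in its first $k$ entries, so $[A y_k]_j = 0$ for $j > k$, placing $q$ in $X_{G,d}$. The main subtlety is the orthogonal extension: over $\mathbb{C}$ there exist isotropic vectors, so one cannot blindly pick and normalise a complement. The rescue is that $\det \mathbb{G}(p) \neq 0$ (the last leading principal minor) ensures the form remains non-degenerate on $U^{\perp}$, and in a non-degenerate $1$-dimensional subspace every non-zero vector is non-isotropic. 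The $LDL^T$ step itself is the familiar translation of ``all leading principal minors non-zero'' into triangular form, equivalent to a successful run of Gram--Schmidt on $y_1,\dots,y_{d-1}$ in which each denominator is a non-vanishing ratio of successive principal minors.
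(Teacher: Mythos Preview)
Your proof is correct. Both your argument and the paper's are at heart a complex Gram--Schmidt procedure, but the packaging differs enough to be worth noting. The paper proceeds inductively: having rotated $p_{v_2},\dots,p_{v_{n-1}}$ into the required flag, it shows (using the leading principal minors of orders $n-2$ and $n-1$) that the residual of $p_{v_n}$ is non-isotropic, then invokes Witt's extension theorem to produce the next orthogonal matrix. You instead carry out the whole orthogonalisation at once via the $LDL^T$ (equivalently, Cholesky) factorisation of $\mathbb{G}(p)$, obtaining a single $(d-1)$-frame $Q$ with $Q^TQ=I_{d-1}$, and then extend $Q$ to a full element of $O(d,\mathbb{C})$ by hand using the non-degeneracy of the form on the one-dimensional complement. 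Your route is more elementary in that it avoids Witt's theorem and any external machinery; the paper's step-by-step version has the complementary advantage of making visible exactly which leading principal minor is needed at which stage. Either way, the hypothesis on leading principal minors is precisely the condition under which complex Gram--Schmidt does not encounter an isotropic pivot.
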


The conditions stated in \Cref{l:rotate} seem rather bizarre,
especially since no such conditions are required if we restrict ourselves to real realisations.
It is also rather easy to see they are not necessary either:
simply choose any realisation $p \in X_{G,d}$ where $p_{v_i} = \mathbf{0}$ for each $i \in \{1,\ldots,d\}$.
To see why we need to be so cautious,
take $G=(V,E)$ to be the complete graph with 4 vertices $v_1,v_2,v_3,v_4$,
and let $p$ be the 3-dimensional realisation of $G$ where
\begin{align*}
	p_{v_1} = (0,0,0), \qquad p_{v_2} = (1,0,0), \qquad p_{v_3} = (2,1,i), \qquad p_{v_4} = (0,1,0).
\end{align*}
In this specific case, there are no realisations in $X_{G,d}$ that are congruent to $p$.
The framework $(G,p)$ also has the interesting properties that its vertices affinely span $\mathbb{C}^3$ and $\|p_v-p_w\|^2 >0$ for every edge $vw \in E$.
Since the matrix
\begin{align*}
	\mathbb{G}(p) =
	\begin{bmatrix}
		1 & 0 & 0 \\
		2 & 1 & i
	\end{bmatrix}
	\begin{bmatrix}
		1 & 2 \\
		0 & 1 \\
		0 & i
	\end{bmatrix}
	=
	\begin{bmatrix}
		1 & 2 \\
		2 & 4
	\end{bmatrix}
\end{align*}
has rank 1,
our constructed realisation is not a counter-example to \Cref{l:rotate}.

\begin{proof}
	Fix $p \in (\mathbb{C}^d)^V$ with the stipulated properties.
	With this,
	define $p^1 \in (\mathbb{C}^d)^V$ to be the realisation where $p^1_v = p_v - p_{v_1}$ for each $v \in V$.
	If $d=1$ then $p^1 \in X_{G,1}$,
	so suppose that $d \geq 2$.
	We now observe that the matrix $\mathbb{G}(p^1) = \mathbb{G}(p)$.
	In fact, a much stronger statement is true:
	if $q$ is a congruent realisation then $\mathbb{G}(q) = \mathbb{G}(p)$.
	
	We now form the sequence of congruent realisations $p^1,\ldots,p^d$ in the following inductive way.
	Fix $n \in \{2,\ldots,d\}$,
	and suppose that $p^1,\ldots,p^{n-1}$ have already been constructed such that $[p^{n-1}_{v_k}]_j =0$ if $k \leq \min \{j, n-1\}$.
	For each $k \in \{1,\ldots,d\}$,
	fix $e_k$ to be the vector with $[e_k]_j = 1$ if $j=k$ and $[e_k]_j = 0$ otherwise.
	We observe here that the linear space spanned by the vectors $e_1,\ldots, e_{n-2}$ contains the points $p^{n-1}_{v_2},\ldots, p^{n-1}_{v_{n-1}}$.
	Fix $z$ to be the vector formed from $p^{n-1}_{v_{n}}$ by replacing its first $n-2$ coordinates with zeroes.
	It is immediate that $z \cdot e_j = 0$ for all $j \in \{1 , \ldots, n-2\}$.
	
	Suppose for contradiction that $\|z\|^2 = 0$,
	and fix $y = p^{n-1}_{v_{n}} - z$.
	Set $A = [p^{n-1}_{v_2} ~ \cdots ~ p^{n-1}_{v_{n-1}}]$.
	The determinant of $A^T A$ is the leading principal minor of order $n-2$ of $\mathbb{G}(p) = \mathbb{G}(p^{n-1})$,
	and so is non-zero.
	Hence, the column span of $A$ is exactly the linear space spanned by $e_1,\ldots, e_{n-2}$;
	importantly,
	this implies that $y$ is contained in the column span of $A$.
	The leading principal minor of order $n-1$ of $\mathbb{G}(p) = \mathbb{G}(p^{n-1})$ is the determinant of the $(n-1) \times (n-1)$ matrix
	\begin{align*}
		B &=		
		\begin{bmatrix}
			(p^{n-1}_{v_2})^T \\
			\vdots \\
			(p^{n-1}_{v_{n}})^T
		\end{bmatrix}
		\begin{bmatrix}
			p^{n-1}_{v_2} & \cdots & p^{n-1}_{v_{n}}
		\end{bmatrix}\\
		&=
		\begin{bmatrix}
			A^T A & A^T (y+z) \\
			(y+z)^T A & (y+z)^T (y+z)
		\end{bmatrix}\\
		&=
		\begin{bmatrix}
			A^T A & A^T y \\
			y^T A & y^T y
		\end{bmatrix}\\
		&= 
		\begin{bmatrix}
			A^T \\
			y^T
		\end{bmatrix}
		\begin{bmatrix}
			A & y
		\end{bmatrix}.
	\end{align*}
	As $y$ is contained in the column span of $A$,
	the matrix $[A ~ y]$ has rank at most $n-2$.
	However, this implies $\det B = 0$,
	contradicting our leading principal minor assumption for $\mathbb{G}(p)$.
	Hence, $\|z\|^2 \neq 0$.
	
	Fix $x := z/\|z\|^2$.
	Define the $d \times d$ matrix $\tilde{M}$ where $\tilde{M}^T = [ e_1 ~ \cdots ~ e_{n-2} ~ x ~ \mathbf{0} ~ \cdots ~ \mathbf{0}]$.
	We note that the matrix $\tilde{M}$ is an isometry (in the quadratic form sense) from the linear subspace of $\mathbb{C}^d$ spanned by $p^{n-1}_{v_2},\ldots, p^{n-1}_{v_{n}}$ to the linear subspace $\mathbb{C}^{n-1} \times \{0\}^{d-n+1}$.
	By Witt's theorem (see, for example, \cite[Theorem 11.15]{roman}),
	there exists a matrix $M \in O(d, \mathbb{C})$ where $Mp^{n-1}_{v_j} = \tilde{M}p^{n-1}_{v_j}$ for each $j \in \{1,\ldots,n\}$.
	With this we now fix $p^n_v = Mp^{n-1}_v$ for each $v \in V$.
	The result now follows from completing the inductive argument and fixing $q = p^d$.
\end{proof}

The map $\tilde{f}_{G,d}$ allows us to more easily define the cardinality of the set $C_d(G,p)$ for most realisations $p$.

\begin{lemma}\label{l:xgd}
	Let $G=(V,E)$ be a graph with $|V| \geq d+1$,
	and fix a sequence of $d$ vertices $v_1,\ldots, v_d$.
	Then the image of $\tilde{f}_{G,d}$ is Zariski dense in the image of $f_{G,d}$ (and so $\tilde{f}_{G,d}$ is dominant),
	and
	\begin{align*}
		\left|\tilde{f}_{G,d}^{-1}\left(f_{G,d}(p) \right) \right| = 2^{d} |C_d(G,p)|
	\end{align*}
	for almost all\footnote{Here we say that a property $P$ holds \emph{for almost all points in $\mathbb{C}^n$} if there exists a Zariski open subset $U \subset \mathbb{C}^n$ of points where property $P$ holds, in which case we say that any point where property $P$ holds is a \emph{general point (with respect to $P$)}.} $p \in (\mathbb{C}^d)^V$.
\end{lemma}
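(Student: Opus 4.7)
The plan is to split the lemma into a density statement and a cardinality count.

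For density, let $U$ be the Zariski open subset of $(\mathbb{C}^d)^V$ on which $\mathbb{G}(p)$ has only non-zero leading principal minors. This set is non-empty (one may exhibit a specific realisation by hand), hence Zariski dense by irreducibility of $(\mathbb{C}^d)^V$. For each $p \in U$, \Cref{l:rotate} produces a congruent $q \in X_{G,d}$, and congruence-invariance of $f_{G,d}$ gives $f_{G,d}(p) = \tilde{f}_{G,d}(q)$. Thus $f_{G,d}(U) \subseteq \tilde{f}_{G,d}(X_{G,d})$, and since polynomial morphisms are Zariski continuous, the Zariski closure of $f_{G,d}(U)$ equals $\overline{f_{G,d}((\mathbb{C}^d)^V)} = \ell_d(G)$, so $\tilde{f}_{G,d}$ is dominant.

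For the cardinality count, the idea is to exhibit a free action of the subgroup $H := \{\pm 1\}^d \subseteq O(d, \mathbb{C})$ of diagonal sign changes on $X_{G,d}$, so that for generic $p$ each congruence class in $C_d(G,p)$ meets $X_{G,d}$ in a single $H$-orbit of $2^d$ elements. The group $H$ preserves $X_{G,d}$ because the defining conditions are zero-coordinate constraints invariant under sign changes. Freeness holds generically: if $\epsilon \in H$ fixes a $q \in X_{G,d}$ whose unconstrained coordinates are all non-zero, then $\epsilon$ acts trivially on every axis and so $\epsilon = \mathrm{id}$.

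The core step is uniqueness: if $q, q' \in X_{G,d}$ are congruent via $(A, t) \in O(d, \mathbb{C}) \ltimes \mathbb{C}^d$, then $A \in H$ and $t = \mathbf{0}$. Evaluating at $v_1$ gives $t = \mathbf{0}$. An induction on $k \in \{2, \ldots, d\}$ shows that $A e_{k-1} = \pm e_{k-1}$: assuming $A e_j = \pm e_j$ for $j < k-1$, the identity $A q_{v_k} = q'_{v_k}$ combined with $q'_{v_k} \in \mathrm{span}(e_1, \ldots, e_{k-1})$ forces the $e_{k-1}$-component of $A e_{k-1}$ to be $\pm 1$ times the corresponding component of $q_{v_k}$, and orthogonality of $A$ then pins $A e_{k-1}$ down to $\pm e_{k-1}$. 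Finally $A e_d = \pm e_d$ by orthogonality against the previous axes. Together with \Cref{l:rotate} applied to a representative of each congruence class in $C_d(G,p)$, this gives $|\tilde{f}_{G,d}^{-1}(f_{G,d}(p))| = 2^d |C_d(G,p)|$ for generic $p$.

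The main obstacle will be coordinating the genericity assumptions so that they hold simultaneously for $p$ and for every realisation equivalent to $p$. Conditions on $p$ alone---non-degeneracy of $\mathbb{G}(p)$, affinely-spanning vertex positions, and non-vanishing unconstrained coordinates---are each Zariski open in $(\mathbb{C}^d)^V$; the trickier requirement that \Cref{l:rotate} applies to every equivalent realisation is a condition on the fibre of $f_{G,d}$. My plan is to invoke the Chevalley-style results summarised in \Cref{sec:dom}: the non-degenerate locus in $X_{G,d}$ is dominant onto a Zariski open subset $V \subseteq \ell_d(G)$ whose full preimage under $f_{G,d}$ still lies in the corresponding non-degenerate locus of $(\mathbb{C}^d)^V$, ensuring the count holds on a Zariski open subset of $(\mathbb{C}^d)^V$.
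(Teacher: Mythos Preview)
Your overall architecture is right, and your uniqueness argument for the congruence class inside $X_{G,d}$ (the induction showing $A$ is a diagonal sign matrix) is essentially the paper's argument in different clothing. The density step is also fine.

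The genuine gap is exactly where you flag it. You need that for a generic $p$, \emph{every} realisation equivalent to $p$ satisfies the hypotheses of \Cref{l:rotate}. Writing $Z$ for the bad locus (where $\mathbb{G}(\cdot)$ has a zero leading principal minor or the points fail to affinely span $\mathbb{C}^d$), this amounts to showing that $f_{G,d}(Z)$ is \emph{not} Zariski dense in $\ell_d(G)$. Your proposed justification---``invoke the Chevalley-style results summarised in \Cref{sec:dom}''---does not deliver this. Chevalley tells you the image of the non-degenerate locus contains an open set, but it says nothing about the \emph{preimage} of that open set avoiding $Z$; a priori $f_{G,d}(Z)$ could still be all of $\ell_d(G)$.

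The paper closes this gap by a dimension count. Assume for contradiction that $f_{G,d}|_Z$ is dominant onto $\ell_d(G)$. Since $\dim Z = d|V|-1$ and $\dim \ell_d(G) = d|V|-\binom{d+1}{2}$ (using $d$-rigidity), the generic fibre of $f_{G,d}|_Z$ has dimension $\binom{d+1}{2}-1$. But $Z$ is closed under congruences (both defining conditions are congruence-invariant), so each fibre contains full congruence orbits; when the realisation affinely spans $\mathbb{C}^d$, such an orbit has dimension $\binom{d+1}{2}$, a contradiction. Therefore generic points of $Z$ must fail to affinely span---yet the non-spanning locus has codimension at least $2$ in $(\mathbb{C}^d)^V$ when $|V|\geq d+2$, so it cannot contain an open subset of the codimension-$1$ set $Z$. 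This contradiction is the missing ingredient.

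Two smaller omissions: you do not treat the non-$d$-rigid case (both sides of the equality are infinite, which needs a short argument), and you do not separate out $|V|=d+1$, where the ``codimension $\geq 2$'' step above fails and the paper instead uses that $G=K_{d+1}$ has only congruent equivalent realisations.
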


\begin{proof}
	For every $p \in (\mathbb{C}^d)^V$,
	let $\mathbb{G}(p)$ be the matrix defined in the statement of \Cref{l:rotate}.
	With this, we fix the proper algebraic set
	\begin{align*}
		Z := \Big\{ p \in (\mathbb{C}^d)^V : \text{ $\mathbb{G}(p)$ has a zero leading principal minor, or $p$ does not affinely span $\mathbb{C}^d$}  \Big\}.
	\end{align*}
	Since $Z$ contains hypersurfaces in $(\mathbb{C}^d)^V$ and $Z \neq (\mathbb{C}^d)^V$
	it has dimension $d|V|-1$.
	By \Cref{l:rotate},
	the set $f_{G,d}((\mathbb{C}^d)^V \setminus Z)$ is a subset of the image of $\tilde{f}_{G,d}$.	
	Hence, the image of $\tilde{f}_{G,d}$ is Zariski dense in the image of $f_{G,d}$.
	
	Since $\dim \ell_d(G) = \rank \diff f_{G,d}(p)$ for any general realisation $p\in (\mathbb{C}^d)^V$,
	it follows from \Cref{t:asimowroth} that $G$ is $d$-rigid if and only if $\dim \ell_d(G) = d|V|- \binom{d+1}{2}$.
	Suppose that $G$ is not $d$-rigid.
	The extension of \Cref{t:asimowroth} to complex realisations gives that the set $C_d(G,p)$ contains infinitely many points for almost all realisations $p$;
	in particular,
	it can be used to prove that for almost all $p \in (\mathbb{C}^d)^V \setminus Z$,
	there exists a sequence $(p^n)_{n \in \mathbb{N}}$ of realisations where $p^n$ is not congruent to either $p^m$ or $p$ for each $m \neq n$, and $p^n \rightarrow p$ as $n \rightarrow \infty$ in the standard complex norm for $(\mathbb{C}^d)^V$ (note that this is an actual metric and not the quadratic form $\| \cdot\|^2$).
	An application of \Cref{l:rotate} to both $p$ and each $p^n$ gives that $\tilde{f}_{G,d}^{-1}\left(f_{G,d}(p)\right)$ also contains infinitely many points as is required.
	
	We now suppose that $G$ is $d$-rigid,
	i.\,e., $\dim \ell_d(G) = d|V|- \binom{d+1}{2}$.
	We split the proof into three cases.
	
	\textbf{(Case 1: $f_{G,d}(Z)$ is Zariski dense in $\ell_d(G)$ and $|V| \geq d+2$.)}
	In this case,
	the restriction of the map $f_{G,d}$ to $Z$ and $\ell_d(G)$, which we now denote by $g : Z \rightarrow \ell_d(G)$, is dominant.
	Hence, by \Cref{borel91},
	there exists a non-empty Zariski open subset $U \subset Z$ where 
	\begin{align*}
		\rank \diff g(p) = \dim \ell_d(G) = d|V|- \binom{d+1}{2}
	\end{align*}
	for each $p \in U$.
	Since $\rank \diff g(p) \leq \rank \diff f_{G,d}(p)$,
	we have that $\rank \diff f_{G,d}(p) = d|V| - \binom{d+1}{2}$ for each $p \in U$.
	As $Z$ has dimension $d|V|-1$,
	it contains an irreducible component of dimension $d|V|-1$.
	It follows that the nullity of $\diff g(p)$ is $\binom{d+1}{2} - 1$ for each $p \in U$.	
	If $q$ is congruent to $p \in Z$ then $\mathbb{G}(q) = \mathbb{G}(p)$ and $q$ is affinely spanning if and only if $p$ is,
	hence $Z$ is closed under congruences.
	This implies that for each $p \in U$,
	the map from the affine congruences of $\mathbb{C}^d$ to the corresponding congruent realisation of $p$ in $Z$ is not injective,
	as otherwise the nullity of $\diff g(p)$ would be at least $\binom{d+1}{2}$ for each $p \in U$.
	Hence, any realisation in $U$ cannot affinely span $\mathbb{C}^d$.
	The set of realisations of $G$ that do not affinely span $\mathbb{C}^d$ is the intersection of $|V| - d \geq 2$ pairwise-distinct hypersurfaces,
	and so $U$ is contained in a $(d|V|-2)$-dimensional subvariety in $Z$.
	However, this now contradicts that $U$ is a non-empty Zariski open subset of the $(d|V|-1)$-dimensional algebraic set $Z$.

	\textbf{(Case 2: $f_{G,d}(Z)$ is not Zariski dense in $\ell_d(G)$ and $|V| \geq d+2$.)}
	Fix the non-empty Zariski open set
	\begin{align*}
		X := \left\{ p \in X_{G,d} : \tilde{f}_{G,d}(p) \text{ is not contained in the closure of $f_{G,d}(Z)$ }   \right\}.
	\end{align*}
	Choose any $p \in X$.
	By \Cref{l:rotate}, for each realisation $p' \in (\mathbb{C}^d)^V$ that is equivalent to $p$,
	there exists some other realisation $p'' \in X_{G,d}$ that is congruent to $p'$.
	Hence, it now suffices to prove that $p$ is congruent to exactly $2^d$ realisations (including itself) in $X_{G,d}$.
	
	Choose any $q \in X_{G,d}$ congruent to $p$,
	and let $A \in O(d,\mathbb{C})$ be the matrix that maps the vertices of $p$ to the vertices of $q$.
	As $p,q \in X_{G,d}$, we have 
	\begin{align*}
		\sum_{n=1}^k A_{j,n} [p_{v_{k+1}}]_n = [q_{v_{k+1}}]_j = 0
	\end{align*}
	for each $1 \leq k < j \leq d$,
	hence $A_{j,k}=0$ for each $1 \leq k < j \leq d$.
	Since $A^T$ maps the vertices of $q$ to the vertices of $p$,
	we similarly have $A_{j,k}=0$ for each $1 \leq j < k \leq d$.
	As the vertices of $p$ linearly span $\mathbb{C}^d$,
	the set $\tilde{f}_{G,d}^{-1}\left(f_{G,d}(p) \right)$ is in one-to-one correspondence with the diagonal orthogonal matrices.
	The set of diagonal orthogonal matrices is exactly the set of diagonal matrices $A$ where $A_{j,j} = \pm 1$ for each $j \in \{1,\ldots, d\}$,
	hence there are $2^d$ of them.
	This now concludes the proof.
	
	\textbf{(Case 3: $|V| =d+1$.)}	
	Since $G$ is $d$-rigid, it can be easily verified that $G$ must be the complete graph on $d+1$ vertices.
	Furthermore,
	every element $p \in (\mathbb{C}^d)^V \setminus Z$ is only equivalent to congruent realisations (\cite[Corollary 8]{Gortler2014}),
	and each equivalent realisation is also contained in $(\mathbb{C}^d)^V \setminus Z$.
	We now define $X = X_{G,d} \setminus Z$ and repeat the same technique as was utilised in Case~2.
	This now concludes the proof.
\end{proof}

\subsection{Defining the \texorpdfstring{$d$-realisation}{d-realisation} number}

Before stating our next result, we require the following technical result regarding dominant morphisms.
The proof of this can be seen in \Cref{sec:dom}.

\begin{theorem}\label{t:deg}
	Let $X \subset \mathbb{C}^m$ and $Y \subset \mathbb{C}^n$ be varieties and $f:X \rightarrow Y$ be a dominant morphism.
	Then the following are equivalent:
	\begin{enumerate}
		\item \label{t:deg1} $\dim X = \dim Y$.
		\item \label{t:deg2} $f$ is \emph{generically finite},
		i.e., for a general $y \in Y$,
		the fibre $f^{-1}(y)$ is a finite set.
		\item \label{t:deg3} There exists a $k \in \mathbb{N}$ and a non-empty Zariski open subset $U \subset X$ where $|f^{-1}(f(x))| = k$ for all $x \in U$.
		Furthermore,
		if $x \in U$ and $x' \in f^{-1}(f(x))$,
		then $\rank \diff f(x') = \dim Y$.
	\end{enumerate}
\end{theorem}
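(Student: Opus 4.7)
The plan is to prove \Cref{t:deg} via the cyclic chain \ref{t:deg3} $\Rightarrow$ \ref{t:deg2} $\Rightarrow$ \ref{t:deg1} $\Rightarrow$ \ref{t:deg3}, which isolates the only genuinely non-formal input (generic étaleness) into a single step.

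For \ref{t:deg3} $\Rightarrow$ \ref{t:deg2}: this implication is essentially tautological. For any $x$ in the open set $U$ supplied by \ref{t:deg3}, the fibre $f^{-1}(f(x))$ has exactly $k$ points and is in particular finite. Since $f$ is dominant and $U$ is non-empty and open in $X$, its image $f(U)$ is constructible and Zariski dense in $Y$, hence contains a non-empty Zariski open subset, so the general fibre is finite.

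For \ref{t:deg2} $\Rightarrow$ \ref{t:deg1}: invoke the fibre dimension theorem for dominant morphisms of irreducible varieties, which guarantees a non-empty open $V \subset Y$ over which every fibre has pure dimension $\dim X - \dim Y$. If the general fibre is finite then this dimension is zero, forcing $\dim X = \dim Y$.

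For \ref{t:deg1} $\Rightarrow$ \ref{t:deg3}: since $f$ is dominant and $\dim X = \dim Y$, the induced field extension $\mathbb{C}(X)/f^{*}\mathbb{C}(Y)$ is finite; set $k := [\mathbb{C}(X):f^{*}\mathbb{C}(Y)]$. Because we work in characteristic zero this extension is automatically separable, so generic étaleness applies: there exists a non-empty Zariski open subset $V \subset Y$ such that the restriction $f^{-1}(V) \to V$ is étale. Setting $U := f^{-1}(V)$ then delivers \ref{t:deg3}: every fibre over a point of $V$ has exactly $k$ points by the étale property, and $\diff f(x')$ has rank $\dim Y$ at every point $x' \in U$. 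The additional remark in \ref{t:deg3} is immediate, because if $x \in U$ and $x' \in f^{-1}(f(x))$ then $f(x') = f(x) \in V$, so $x' \in U$ and the rank statement applies to $x'$ as well.

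The main obstacle is the invocation of generic étaleness, which is the only ingredient that is not purely formal dimension counting. To keep the appendix self-contained, I would state this as a citable lemma (available in standard references such as Mumford's \emph{Red Book} or Hartshorne), and sketch how it follows from the Jacobian criterion combined with non-degeneracy of the trace form for separable extensions in characteristic zero. Everything else reduces to the fibre dimension theorem and basic topology of constructible sets.
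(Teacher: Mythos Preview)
Your argument is correct in outline and reaches the same conclusion, but the route you take for \ref{t:deg1} $\Rightarrow$ \ref{t:deg3} differs from the paper's. The paper argues in two separate pieces: it first cites a result of Harris (\Cref{c:harris}) to obtain a non-empty open $U' \subset Y$ over which the fibre cardinality is a constant $k$, and then separately uses \Cref{borel91} to show that the ``bad'' locus $C = \{x \in X : x \text{ singular or } \rank \diff f(x) < \dim Y\}$ is a proper closed subset of $X$, whose image (having dimension $< \dim Y$) is not Zariski dense in $Y$; intersecting $U'$ with the complement of $\overline{f(C)}$ and pulling back yields the desired open set in $X$. You instead package both conclusions into a single invocation of generic \'etaleness in characteristic zero, which is more conceptual and arguably cleaner. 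The trade-off is that your citation is heavier: the versions of generic smoothness in Hartshorne and Mumford assume the source is nonsingular, so to make your argument airtight for possibly singular $X$ you should either restrict first to the smooth locus of $X$ (and then remove the image of the singular locus from $V$, exactly as the paper removes $\overline{f(C)}$), or cite a stronger form.

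One small technical point: \'etale alone does not force constant fibre cardinality (an open immersion is \'etale with fibres of size $0$ or $1$), so when you say ``the restriction $f^{-1}(V) \to V$ is \'etale'' and conclude that every fibre has exactly $k$ points, you are implicitly also using generic \emph{finiteness}. You should say ``finite \'etale'' and note that generic finiteness is part of the same standard package (or follows immediately from the fibre dimension theorem you already invoked for \ref{t:deg2} $\Rightarrow$ \ref{t:deg1}).
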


\begin{proposition}\label{p:dom}
	Let $G=(V,E)$ be a graph with $|V| \geq d+1$.
	Then the following are equivalent:
	\begin{enumerate}
		\item\label{p:dom1} $G$ is $d$-rigid.
		\item\label{p:dom2} The map $\tilde{f}_{G,d}$ is generically finite.
		\item\label{p:dom3} There exists an $n \in \mathbb{N}$ and a non-empty Zariski open subset $U \subset (\mathbb{C}^d)^V$ where $ |C_d(G,p)|=n$ for all $p \in U$.
		Furthermore,
		if $p \in U$ and $q$ is an equivalent $d$-dimensional realisation of $G$,
		then $\rank \diff f_{G,d}(q) =  d|V| - \binom{d+1}{2}$.
	\end{enumerate}
\end{proposition}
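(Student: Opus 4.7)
The plan is to use the morphism $\tilde{f}_{G,d} : X_{G,d} \to \ell_d(G)$ introduced before \Cref{l:rotate}, which is dominant by \Cref{l:xgd}, together with \Cref{t:deg} to bridge the dimensional, finiteness, and cardinality conditions appearing in the three parts of the statement.

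For \ref{p:dom1} $\Leftrightarrow$ \ref{p:dom2}, I would appeal to the complex analogue of \Cref{t:asimowroth} to rephrase $d$-rigidity as the dimension equality $\dim \ell_d(G) = d|V|-\binom{d+1}{2}$, and observe that $X_{G,d}$ has this same dimension. Since $\tilde{f}_{G,d}$ is a dominant morphism between irreducible varieties, \Cref{t:deg} immediately yields that this dimension equality is equivalent to $\tilde{f}_{G,d}$ being generically finite.

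For \ref{p:dom2} $\Rightarrow$ \ref{p:dom3}, I would apply \Cref{t:deg} to $\tilde{f}_{G,d}$ to produce an integer $k$ and a non-empty Zariski open $U' \subset X_{G,d}$ on which the fibre of $\tilde{f}_{G,d}$ has constant size $k$ and the differential attains maximal rank $d|V|-\binom{d+1}{2}$. To transport this to $(\mathbb{C}^d)^V$, I would first note that the rank-drop set $W := \{q \in (\mathbb{C}^d)^V : \rank \diff f_{G,d}(q) < d|V|-\binom{d+1}{2}\}$ is a proper Zariski closed subset, since \ref{p:dom2} implies \ref{p:dom1} and hence the generic rank equals $d|V|-\binom{d+1}{2}$. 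A dimension argument then forces $\overline{f_{G,d}(W)}$ to be a proper subvariety of $\ell_d(G)$: if $f_{G,d}|_W$ were dominant, then by Borel-type arguments the generic differential of $f_{G,d}|_W$ would have rank $\dim \ell_d(G) = d|V|-\binom{d+1}{2}$, contradicting the very definition of $W$. I would then define
\[
U := \Big\{ p \in (\mathbb{C}^d)^V \setminus Z : f_{G,d}(p) \notin \overline{f_{G,d}(W)}, \text{ and some realisation congruent to } p \text{ lies in } U' \Big\},
\]
where $Z$ is the exceptional set from the proof of \Cref{l:xgd}. Openness is verified condition by condition: the ``congruent representative in $U'$'' requirement is handled via Chevalley's theorem applied to the orbit map $(O(d,\mathbb{C}) \ltimes \mathbb{C}^d) \times X_{G,d} \to (\mathbb{C}^d)^V$, and the remaining clauses are open by openness of $Z^c$ and lower semi-continuity of rank. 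Non-emptiness follows because the image of $\tilde{f}_{G,d}$ is dense in $\ell_d(G)$ and so meets the complement of the proper subvariety $\overline{f_{G,d}(W)}$. For $p \in U$, \Cref{l:xgd} yields $|C_d(G,p)| = 2^{-d}|\tilde{f}_{G,d}^{-1}(f_{G,d}(p))| = k/2^d =: n$, and for any realisation $q$ equivalent to $p$, the identity $f_{G,d}(q) = f_{G,d}(p) \notin \overline{f_{G,d}(W)}$ forces $q \notin W$, giving the required maximal rank.

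For \ref{p:dom3} $\Rightarrow$ \ref{p:dom2}, I would intersect $U$ with the Zariski open subset on which the cardinality identity of \Cref{l:xgd} holds to produce a non-empty Zariski open set where $|\tilde{f}_{G,d}^{-1}(f_{G,d}(p))| = 2^d n$. Since $f_{G,d}$ is dominant, the image of this set is Zariski dense in $\ell_d(G)$, so the generic fibre of $\tilde{f}_{G,d}$ is finite of size $2^d n$, proving generic finiteness. The step I expect to be the main obstacle is the \ref{p:dom2} $\Rightarrow$ \ref{p:dom3} transport from $X_{G,d}$ to $(\mathbb{C}^d)^V$: tracking the $(O(d,\mathbb{C}) \ltimes \mathbb{C}^d)$-action, excluding the degenerate realisations in $Z$, and ensuring the rank condition holds for every equivalent realisation (not merely for $p$ itself) must be combined into a single Zariski open set, which is where one most needs \Cref{l:rotate}, \Cref{l:xgd}, and Chevalley's theorem in concert rather than in isolation.
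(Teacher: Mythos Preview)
Your approach is the same as the paper's: show that $\tilde f_{G,d}$ is dominant (via \Cref{l:xgd}), identify $d$-rigidity with the dimension equality $\dim X_{G,d}=\dim\ell_d(G)$ via \Cref{t:asimowroth}, and then invoke \Cref{t:deg}. The paper's entire proof is exactly those three lines; it leaves the transport from $X_{G,d}$ to $(\mathbb{C}^d)^V$ implicit in the combination of \Cref{t:deg} and \Cref{l:xgd}. Your write-up simply makes that transport explicit, so the two proofs are not genuinely different.

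One simplification and one small gap in your \ref{p:dom2}$\Rightarrow$\ref{p:dom3} step. First, the extra machinery (the rank-drop set $W$, the orbit map, Chevalley) is unnecessary. If you look at the proof of \Cref{t:deg}, the Zariski open $U'\subset X_{G,d}$ it produces is actually $\tilde f_{G,d}^{-1}(V)$ for some non-empty Zariski open $V\subset\ell_d(G)$, and the rank statement there already holds for \emph{every} point of $\tilde f_{G,d}^{-1}(y)$ once $y\in V$. So you may simply set
\[
U \;=\; f_{G,d}^{-1}(V)\ \cap\ \{\,p:\text{the identity of \Cref{l:xgd} holds at }p\,\},
\]
which is manifestly Zariski open in $(\mathbb{C}^d)^V$. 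For $p\in U$ the cardinality clause follows from \Cref{l:xgd}, and for any equivalent $q$ one has $f_{G,d}(q)=f_{G,d}(p)\in V$; using \Cref{l:rotate} to pass to a congruent $q'\in X_{G,d}$ (noting that $\rank \diff f_{G,d}$ is congruence-invariant) places $q'$ in $\tilde f_{G,d}^{-1}(V)=U'$, so the rank clause of \Cref{t:deg}\ref{t:deg3} applies. This is exactly what the paper's one-line appeal to \Cref{t:deg} is encoding.

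Second, the gap: Chevalley's theorem only gives that the image of the orbit map is constructible, not Zariski open, so ``openness condition by condition'' does not go through as stated for the clause ``some congruent representative lies in $U'$''. The fix is the observation above: since $U'=\tilde f_{G,d}^{-1}(V)$ and congruent realisations have the same $f_{G,d}$-value, for $p\notin Z$ that clause is equivalent to $f_{G,d}(p)\in V$, which is visibly Zariski open. With this adjustment your argument is complete and coincides with the paper's.
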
 

\begin{proof}
	By \Cref{l:xgd},
	the map $\tilde{f}_{G,d}$ is dominant.	
	Since $\dim \ell_d(G) = \rank \diff f_{G,d}(p)$ for any general realisation $p\in (\mathbb{C}^d)^V$,
	it follows from \Cref{t:asimowroth} that $G$ is $d$-rigid if and only if 
	\begin{align*}
		\dim \ell_d(G) = d|V|- \binom{d+1}{2} = \dim X_{G,d}.
	\end{align*}
	The result now holds by applying \Cref{t:deg} to the map $\tilde{f}_{G,d}$.
\end{proof}

Using \Cref{p:dom} we can now make the following well-defined definition of the $d$-realisation number for any graph.

\begin{definition}\label{def:count}
	The \emph{$d$-realisation number} of a graph $G=(V,E)$ is an element of $\mathbb{N} \cup \{\infty\}$ given by
	\begin{align*}
		c_d(G) :=		
		\begin{cases}
			|C_d(G,p)| \text{ for a general realisation $p \in (\mathbb{C}^d)^V$} &\text{if } |V| \geq d+1, \\
			 1 &\text{if } |V| \leq d \text{ and $G$ is complete}, \\
			 \infty &\text{if } |V| \leq d \text{ and $G$ is not complete}.
		\end{cases}
	\end{align*}
\end{definition}

It follows from \Cref{p:dom} that a graph $G$ is $d$-rigid if and only if $c_d(G) < \infty$.

We can relate the $d$-realisation number back to real realisations so long as we allow for equivalent complex realisations when counting.
We first require the following technical lemma.

\begin{lemma}\label{l:realzar}
	If $U$ is a non-empty Zariski open subset of $\mathbb{C}^n$, then $U \cap \mathbb{R}^n$ is a non-empty Zariski open subset of $\mathbb{R}^n$.
\end{lemma}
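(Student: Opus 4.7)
The lemma splits into two assertions: that $U \cap \mathbb{R}^n$ is non-empty, and that it is Zariski open in $\mathbb{R}^n$. My plan is to handle them separately, with the non-emptiness being the only substantive point.

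For Zariski openness, I would start from the fact that $\mathbb{C}^n \setminus U$ is the vanishing locus of some collection of complex polynomials $\{f_\alpha\} \subset \mathbb{C}[X_1,\ldots,X_n]$. Writing each $f_\alpha = g_\alpha + i h_\alpha$ with $g_\alpha, h_\alpha \in \mathbb{R}[X_1,\ldots,X_n]$, a real point $x \in \mathbb{R}^n$ satisfies $f_\alpha(x) = 0$ if and only if $g_\alpha(x) = h_\alpha(x) = 0$, since the real and imaginary parts of a complex number vanish independently. Hence
\[
\mathbb{R}^n \setminus (U \cap \mathbb{R}^n) = \bigcap_{\alpha} \{ x \in \mathbb{R}^n : g_\alpha(x) = 0, \, h_\alpha(x) = 0 \},
\]
which is the vanishing locus of real polynomials and so Zariski closed in $\mathbb{R}^n$.

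For non-emptiness, since $U$ is non-empty and Zariski open, its complement is a proper algebraic subset of $\mathbb{C}^n$, so there is some non-zero polynomial $f \in \mathbb{C}[X_1,\ldots,X_n]$ that vanishes on $\mathbb{C}^n \setminus U$. It then suffices to find a real point where $f$ is non-zero. Writing $f = g + ih$ with $g, h \in \mathbb{R}[X_1,\ldots,X_n]$, the assumption $f \neq 0$ forces at least one of $g, h$ to be a non-zero real polynomial, say $g$. A standard induction on $n$ (using that $\mathbb{R}$ is infinite, so that a non-zero one-variable real polynomial has only finitely many roots) yields a point $x \in \mathbb{R}^n$ with $g(x) \neq 0$. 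Since $g(x), h(x) \in \mathbb{R}$, this implies $f(x) = g(x) + i h(x) \neq 0$, so $x \in U \cap \mathbb{R}^n$.

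Neither step is really difficult; the only conceptual point worth stressing is the trick of decomposing a complex polynomial into real and imaginary parts so that both the defining equations of the closed set and the argument for non-emptiness reduce to statements about genuine real polynomials. I do not expect any obstacle beyond that.
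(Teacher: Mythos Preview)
Your proposal is correct and follows essentially the same approach as the paper: both decompose the defining complex polynomials into real and imaginary parts to show the real intersection is cut out by real polynomials, and both use the fact that a nonzero real polynomial cannot vanish on all of $\mathbb{R}^n$ to establish non-emptiness. The paper packages the non-emptiness argument as the equivalence $Z_{\mathbb{R}}(F) = \mathbb{R}^n \Leftrightarrow F_{\mathcal{R}} = F_{\mathcal{I}} = \{0\} \Leftrightarrow F = \{0\}$ rather than picking a single $f$ and finding a real non-root, but this is only a cosmetic difference.
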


\begin{proof}
	For a field $\mathbb{F} \in \{ \mathbb{R}, \mathbb{C}\}$,
	we fix $\mathbb{F}[X^n]$ to be the set of all $n$-variable polynomials over $\mathbb{F}$.
	Define the ideal 
	\begin{align*}
		F := \{ f \in \mathbb{C}[X^n] : f(p) = 0 \text{ for all } p \notin U\}.
	\end{align*}
	Since $U$ is a Zariski open set,
	the zero set
	\begin{align*}
		Z(F) := \{p \in \mathbb{C}^n: f(p) = 0 \text{ for all } f \in F\}
	\end{align*}
	is a Zariski closed set and the complement of $U$.
	As $U$ is non-empty we immediately have that $F \neq \{0\}$.
	For each $f \in F$,
	there exist two unique polynomials $f_{\mathcal{R}},f_{\mathcal{I}} \in \mathbb{R}[X^n]$ such that $f = f_{\mathcal{R}}+ i f_{\mathcal{I}}$:
	to see this, note that there exists a finite subset $A \subset \mathbb{Z}^n_{\geq 0}$ and real values $a_x,b_x $ for each $x \in A$ such that $f(p) = \sum_{x \in A} (a_x + i b_x) p^x$ (here $p^x = p_1^{x_1} \cdots p_n^{x_n}$ for each $x=(x_1,\ldots,x_n)$),
	and so $f_{\mathcal{R}}(p) = \sum_{x \in A} a_x p^x$ and $f_{\mathcal{I}}(p) = \sum_{x \in A} b_x p^x$.
	For each $p \in \mathbb{R}^n$ we note that $f(p) = 0$ if and only if $f_{\mathcal{R}}(p) = f_{\mathcal{I}}(p) = 0$.
	Fix $F_\mathcal{R} := \{ f_{\mathcal{R}} :f \in F\}$ and $F_\mathcal{I} := \{ f_{\mathcal{I}} :f \in F\}$,
	and for every set of (real or complex) polynomials $S \subset \mathbb{C}[X^n]$ fix $Z_{\mathbb{R}}(S):= \{p \in \mathbb{R}^n : f(p) = 0 \text{ for all } f \in S \}$.
	Then $Z_{\mathbb{R}}(F) = Z_{\mathbb{R}}(F_{\mathcal{R}}) \cap Z_{\mathbb{R}}(F_{\mathcal{I}})$.
	Hence, the set $Z_{\mathbb{R}}(F)$ is a Zariski closed subset of $\mathbb{R}^n$ since $Z_{\mathbb{R}}(F_{\mathcal{R}})$ and $Z_{\mathbb{R}}(F_{\mathcal{I}})$ are Zariski closed subsets of $\mathbb{R}^n$.
	Furthermore,
	the set $Z_{\mathbb{R}}(F)$ is a proper Zariski closed subset of $\mathbb{R}^n$:
	this follows from the observing the equivalence
	\begin{align*}
		Z_{\mathbb{R}}(F) = \mathbb{R}^n \quad \Leftrightarrow \quad F_{\mathcal{R}} = F_{\mathcal{I}} = \{0\}  \quad \Leftrightarrow \quad F= \{0\},
	\end{align*}
	with the latter property contradicting our previous observation that $F \neq \{0\}$.
	The result now holds as $U \cap \mathbb{R}^n = \mathbb{R}^n \setminus Z_{\mathbb{R}}(F)$.
\end{proof}

\begin{corollary}\label{cor:realstuff}
	For each graph $G=(V,E)$,
	there exists a non-empty Zariski open subset $U_\mathbb{R} \subset (\mathbb{R}^d)^V$ of real $d$-dimensional realisations $p$ where $|C_d(G,p)| = c_d(G)$.
\end{corollary}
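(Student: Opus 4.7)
The plan is to obtain $U_\mathbb{R}$ by intersecting the complex Zariski open set produced by \Cref{p:dom} with the real locus, using \Cref{l:realzar} to guarantee that this intersection remains non-empty and Zariski open. The hard work for both ingredients has already been done, so the corollary should be essentially a transport result from $\mathbb{C}$ to $\mathbb{R}$.

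First I would treat the main case $|V| \geq d+1$. By \Cref{p:dom}\ref{p:dom3}, there is a non-empty Zariski open subset $U \subset (\mathbb{C}^d)^V$ on which $|C_d(G,p)| = c_d(G)$. Identifying $(\mathbb{C}^d)^V$ with $\mathbb{C}^{d|V|}$ and $(\mathbb{R}^d)^V$ with $\mathbb{R}^{d|V|}$, \Cref{l:realzar} yields that $U_\mathbb{R} := U \cap (\mathbb{R}^d)^V$ is a non-empty Zariski open subset of $(\mathbb{R}^d)^V$. For any $p \in U_\mathbb{R}$, $p$ is in particular a general point of $(\mathbb{C}^d)^V$ in the sense of \Cref{def:count}, so $|C_d(G,p)| = c_d(G)$, with the count always being taken in the complex realisation space $C_d(G,p) = f_{G,d}^{-1}(f_{G,d}(p))/\!\sim$.

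The boundary cases $|V| \leq d$ are immediate from the definition. If $G$ is complete then any two (real or complex) realisations of $K_V$ with the same pairwise squared distances are congruent, so $|C_d(G,p)| = 1 = c_d(G)$ for every $p$ and one takes $U_\mathbb{R} = (\mathbb{R}^d)^V$. If $G$ is not complete then $c_d(G) = \infty$; for any generic realisation one can exhibit an analytic continuous family of pairwise non-congruent but equivalent complex realisations (by flexing one endpoint of a non-edge along a variety of positive dimension), so $|C_d(G,p)| = \infty$ on a Zariski open set.

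The only potential pitfall is the step where complex Zariski openness descends to real Zariski openness together with non-emptiness; but this is exactly the content of \Cref{l:realzar}, so there is no real obstacle. In particular I do not need to invoke \Cref{t:deg} or the structural results from \Cref{l:xgd} again — their consequences have been packaged into \Cref{p:dom}, and the corollary is a one-line application.
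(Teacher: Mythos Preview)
Your approach matches the paper's: obtain the complex Zariski open set from \Cref{p:dom} and intersect with the reals via \Cref{l:realzar}. However, your case split by $|V|$ alone leaves a gap. \Cref{p:dom} states that \ref{p:dom1}--\ref{p:dom3} are \emph{equivalent}, so \ref{p:dom3} produces the Zariski open set $U$ with $|C_d(G,p)|=n\in\mathbb{N}$ only when $G$ is $d$-rigid; if $|V|\geq d+1$ but $G$ is not $d$-rigid, your invocation of \Cref{p:dom}\ref{p:dom3} yields nothing, and you have given no argument that $|C_d(G,p)|=\infty$ on a real Zariski open set in that case.

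The paper instead splits on rigidity: when $G$ is not $d$-rigid one has $c_d(G)=\infty$, and \Cref{t:asimowroth} shows that every regular (in particular every generic) real realisation is non-rigid and hence has infinitely many equivalence classes, giving the required $U_\mathbb{R}$ directly. Your flexing sketch for the ``$|V|\leq d$, not complete'' subcase is in the same spirit and would adapt to cover this, but as written you have not applied it when $|V|\geq d+1$.
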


\begin{proof}
	The result is trivial if $G$ is complete and $|V| \leq d$.	
	If $G$ is not $d$-rigid then $c_d(G) = \infty$ and the result holds by \Cref{t:asimowroth}.
	If $G$ is $d$-rigid with $|V| \geq d+1$ then we fix $U \subset (\mathbb{C}^d)^V$ to be the set defined in \Cref{p:dom} and define $U_{\mathbb{R}} := U \cap (\mathbb{R}^d)^V$.
	The set $U_{\mathbb{R}}$ is now a Zariski open subset of $(\mathbb{R}^d)^V$ by \Cref{l:realzar}.
\end{proof}

It is important to note that \Cref{cor:realstuff} does not guarantee that every other equivalent realisation in $C_d(G,p)$ is also real.
In fact we note that this is often not the case.
For example, any minimally $d$-rigid graph with a vertex of degree $d$ must have general $d$-dimensional realisations with complex equivalent realisations:
this realisation can be chosen in a similar way to the right realisation pictured in \Cref{fig:real}.
Failing this,
it is then natural to ask whether each graph $G$ has at least one real generic $d$-dimensional realisation $p$ where every other equivalent realisation in $C_d(G,p)$ is also real.
It was proven in \cite{JacksonOwen} that this too is false when $d=2$.
It does, however, remain open whether the statement is false when $d>2$.
For more on the topic of equivalent real realisations for a graph,
see \cite{Bartzos2021}.


When $d=1$, it is relatively easy to compute the realisation number of a graph.
We first require the following terminology.
A connected graph is \emph{biconnected} if it has at least 2 vertices and the removal of any vertex will always produce a connected graph.
We remark that the set of biconnected graphs is exactly the set of 2-connected graphs with the addition of the complete graph with two vertices.
Given a graph $G=(V,E)$, we say that a subset $F \subset E$ is a \emph{biconnected component} of $G$ if the subgraph $H=(V[F],F)$ induced by $F$ is biconnected and no subgraph of $G$ containing $H$ that is not $H$ itself is biconnected.
The biconnected components of a graph form a partition of its edge set.

\begin{proposition}\label{p:1d}
	Let $G$ be a graph and fix $k$ to be the number of biconnected components of $G$.
	Then $c_1(G) < \infty$ if and only if $G$ is connected.
	If $G$ is connected with at least one edge then $c_1(G) = 2^{k-1}$.
\end{proposition}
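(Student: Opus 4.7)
The plan is to split the proof by connectedness of $G$, and in the connected case reduce the counting of equivalent realisations to sign assignments on edges which must be constant on each biconnected component.

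For the disconnected case, suppose $G$ has two distinct connected components $A$ and $B$. Fix a generic $p$ and set $q^{(t)}_v := p_v$ for $v \in A$ and $q^{(t)}_v := p_v + t$ for $v \in B$. Since no edge joins $A$ to $B$, each framework $(G,q^{(t)})$ is equivalent to $(G,p)$. As $O(1,\mathbb{C}) = \{\pm 1\}$, any congruence of $\mathbb{C}$ has the form $z \mapsto \pm z + c$, and distinct values of $t$ generically give non-congruent realisations (the difference $q^{(t)}_v - q^{(t)}_w$ for a fixed pair $v \in A, w \in B$ depends affinely on $t$). Hence $c_1(G) = \infty$ in this case.

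For the connected case, fix a generic $p$ with $p_{v_0} = 0$ for some chosen vertex $v_0$. Any equivalent realisation $q$ can be translated so that $q_{v_0} = 0$, after which the equation $(q_v - q_w)^2 = (p_v - p_w)^2$ factors for each edge $vw$ into $q_v - q_w = \epsilon_{vw}(p_v - p_w)$ with $\epsilon_{vw} \in \{\pm 1\}$. Thus $q$ is determined by a sign function $\epsilon : E \to \{\pm 1\}$. The key lemma is that, for generic $p$, $\epsilon$ is constant along every cycle: for an oriented cycle $v_1 v_2 \cdots v_m v_1$ set $x_i := p_{v_{i+1}} - p_{v_i}$, so $\sum_i x_i = 0$, and the analogous closure condition for $q$ is $\sum_i \epsilon_i x_i = 0$. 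Writing $A := \{i : \epsilon_i = +1\}$, this reduces to $\sum_{i \in A} x_i = 0$, and for generic $p$ no proper nonempty partial sum of the $x_i$ vanishes, forcing $A = \emptyset$ or $A = \{1,\ldots,m\}$. Because any two edges of a biconnected component either coincide (in $K_2$) or lie on a common cycle (by Menger's theorem), $\epsilon$ is constant on each of the $k$ biconnected components, giving at most $2^k$ sign patterns. Conversely, every such block-constant sign assignment is realised: walk along a spanning tree rooted at $v_0$ and set $q_w := q_v + \epsilon_{B(vw)}(p_w - p_v)$ on tree edges; consistency on a non-tree edge $vw$ follows because its fundamental cycle is contained in a single biconnected component, so the block-constant signs telescope along the tree path to yield $q_v - q_w = \epsilon_{B(vw)}(p_v - p_w)$. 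Finally, the residual congruence group after fixing $q_{v_0} = 0$ is $\{\pm 1\}$ acting by global reflection, which flips all $k$ block signs simultaneously, so the $2^k$ patterns fall into $2^{k-1}$ congruence classes.

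The main obstacle is the cycle-consistency step: one must choose a single non-empty Zariski open set of generic $p$ on which no proper partial sum of the vectors $x_i$ vanishes along \emph{any} cycle of $G$, and simultaneously ensure that two sign patterns not differing by a global flip produce non-congruent realisations. Both are finite-intersection genericity arguments and so pose no fundamental difficulty, but must be assembled uniformly across all cycles and all pairs of sign patterns to make the $2^{k-1}$ count valid.
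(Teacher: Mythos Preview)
Your argument is correct; the genericity bookkeeping you flag at the end is indeed harmless, since there are only finitely many simple cycles and finitely many pairs of block-constant sign patterns, so the relevant Zariski open conditions intersect nontrivially.

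The paper's proof takes a different and much shorter route. It asserts two facts without details: (i) if $H$ is obtained by gluing connected graphs $H_1,H_2$ at a single vertex then $c_1(H)=2\,c_1(H_1)c_1(H_2)$, and (ii) $c_1(G)=1$ whenever $G$ is biconnected. Inducting along the block--cut tree then gives $2^{k-1}$. Your direct sign-pattern analysis is in effect a simultaneous proof of both assertions: the cycle lemma is precisely why (ii) holds, and the freedom of the sign on each block together with the global reflection quotient is precisely the content of (i). So your approach is more elementary and self-contained, while the paper's is structural and relies on the reader supplying (i) and (ii); neither subsumes the other, but yours would serve well as the justification behind the paper's two claims. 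The paper also leaves the disconnected case implicit (it follows from $1$-rigidity being equivalent to connectedness), whereas you treat it explicitly.
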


\begin{proof}
	If we glue two connected graphs $H_1$ and $H_2$ at exactly one vertex to form a graph $H$ then $c_1(H) = 2c_1(H_1)c_1(H_2)$.
	The result now follows from the observation that $c_1(G) = 1$ if $G$ is biconnected.
\end{proof}

As the number of biconnected components of a graph can be computed in linear time \cite{HopcroftTarjan}, it follows that there exists a linear time algorithm for computing $c_1(G)$.
The current fastest deterministic algorithm for computing $c_2(G)$ when $G$ is minimally 2-rigid runs in exponential time \cite{PlaneCount}.
There are no other known deterministic algorithms for computing $c_d(G)$ outside of restricting to specific families of graphs (e.g., chordal graphs).
%
%
%
%

\section{Counting realisations on a complex sphere}\label{sec:countsphere}

Our aim in this section is to prove that the definition of the spherical $d$-realisation number alluded to in the introduction can be stated in a rigorous manner (see \Cref{def:countsphere}).

\subsection{Complex spherical rigidity map}

We define
\begin{align*}
	\mathbb{S}_{\mathbb{C}}^d := \left\{ x \in \mathbb{C}^{d+1} : \|x\|^2=1 \right\}
\end{align*}
to be the \emph{complexification of the $d$-dimensional (unit) sphere}.
As $\mathbb{S}_{\mathbb{C}}^d$ is a smooth connected variety (and hence irreducible),
so too is $(\mathbb{S}_{\mathbb{C}}^{d})^V$ for any finite set $V$.
Like how we chose to consider a larger set of realisations by switching from $(\mathbb{R}^d)^V$ to $(\mathbb{C}^d)^V$,
we now consider spherical realisations in $\mathbb{S}_\mathbb{C}^d$ by extending the set $(\mathbb{S}^d)^V$ to the set $(\mathbb{S}_{\mathbb{C}}^d)^V$.
Any spherical realisation in $p \in (\mathbb{S}^d)^V$ is now said to be a \emph{real spherical realisation}.
At a spherical realisation $p \in (\mathbb{S}_{\mathbb{C}}^{d})^{V}$,
the linear space
\begin{align*}
	T_p (\mathbb{S}_{\mathbb{C}}^d)^V : =\left\{x \in (\mathbb{C}^{d+1})^V : x_v \cdot p_v=0 \text{ for all } v \in V \right\}
\end{align*}
is the tangent space of $(\mathbb{S}_{\mathbb{C}}^{d})^{V}$ at $p$.

For any graph $G=(V,E)$ we define the \emph{complex spherical rigidity map} to be the map
\begin{align*}
	s_{G,d} \colon (\mathbb{S}_{\mathbb{C}}^{d})^{V} \rightarrow \mathbb{C}^{E}, ~ p \mapsto \left( \frac{1}{2}\|p_v-p_w\|^2 \right)_{vw \in E} =\left( 1 - p_v \cdot p_w\right)_{vw \in E}.
\end{align*}
We denote the Zariski closure of the image of $s_{G,d}$ by $\ell^*_d(G)$.
Since the domain of $s_{G,d}$ is irreducible,
$\ell_d^*(G)$ is a variety.
The derivative of $s_{G,d}$ at a point $p \in (\mathbb{S}_{\mathbb{C}}^{d})^{V}$ can be seen to be the linear map
\begin{align*}
	\diff s_{G,d}(p) \colon T_p (\mathbb{S}_{\mathbb{C}}^{d})^{V} \rightarrow \mathbb{C}^{E}, ~ x \mapsto \left( (p_v-p_w) \cdot (x_v-x_w) \right)_{vw \in E} = \left(-(p_v \cdot x_w + p_w \cdot x_v) \right)_{vw \in E}.
\end{align*}
We define two realisations $p,q \in (\mathbb{S}_{\mathbb{C}}^{d})^V$ to be \emph{congruent} (denoted by $p \sim q$) if and only if $p = Aq$ for some $A \in O(d+1, \mathbb{C})$.
If the vertex set of $p$ linearly spans $\mathbb{C}^{d+1}$,
then two realisations $p,q$ are congruent if and only if $s_{K_V,d}(p) = s_{K_V,d}(q)$.
For all $p \in (S_\mathbb{C}^{d})^V$,
we define $C^*_d(G,p) := s^{-1}_{G,d} (s_{G,d}(p))/_\sim$ to be the \emph{spherical realisation space of $(G,p)$}.

We can link the infinitesimal properties of rigidity in $\mathbb{C}^d$ and in $\mathbb{S}^d_\mathbb{C}$ with the following result.

\begin{lemma}\label{l:equivrank}
	Let $p \in (\mathbb{C}^{d})^V$ and $q \in (\mathbb{S}_{\mathbb{C}}^{d})^V$ be such that $[q_{v}]_{d+1} \neq 0$ and $[p_v]_i = [q_v]_i/[q_v]_{d+1}$ for all $v \in V$ and $i \in \{1,\ldots,d\}$.
	Then $\rank \diff f_{G,d}(p)= \rank \diff s_{G,d}(q)$.
\end{lemma}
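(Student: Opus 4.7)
The plan is to exhibit an explicit linear isomorphism between $\ker \diff s_{G,d}(q)$ and $\ker \diff f_{G,d}(p)$. Since the tangent space $T_q(\mathbb{S}_\mathbb{C}^d)^V$ has complex dimension $d|V|$ (as $\mathbb{S}_\mathbb{C}^d$ is smooth of dimension $d$) and $(\mathbb{C}^d)^V$ also has dimension $d|V|$, such an isomorphism yields the required rank equality by the rank--nullity theorem.

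The guiding picture is the central projection $\pi \colon \mathbb{S}_\mathbb{C}^d \setminus \{[x]_{d+1}=0\} \to \mathbb{C}^d$, which by hypothesis sends $q_v$ to $p_v$. Its derivative at $q_v$, suitably rescaled, motivates
\[
    \Phi \colon \ker \diff s_{G,d}(q) \to \ker \diff f_{G,d}(p), \qquad \Phi(y)_v := y_v^{(d)}/[q_v]_{d+1},
\]
where $y_v^{(d)} := ([y_v]_1, \ldots, [y_v]_d)$. The key algebraic fact is the factorisation $q_v = [q_v]_{d+1}(p_v, 1)$. The tangency condition $q_v \cdot y_v = 0$ then immediately gives $[y_v]_{d+1} = -p_v \cdot y_v^{(d)}$. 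Substituting this into the spherical rigidity equation $(q_v - q_w) \cdot (y_v - y_w) = 0$ (equivalently $q_v \cdot y_w + q_w \cdot y_v = 0$, after tangency) and dividing through by the nonzero scalar $[q_v]_{d+1}[q_w]_{d+1}$ reduces the constraint exactly to $(p_v - p_w) \cdot (\Phi(y)_v - \Phi(y)_w) = 0$, showing $\Phi$ is well-defined.

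Injectivity is immediate: if $\Phi(y) = 0$ then $y_v^{(d)} = 0$, and the tangency-derived identity forces $[y_v]_{d+1} = 0$ as well. For surjectivity, given $z \in \ker \diff f_{G,d}(p)$ I set $y_v := [q_v]_{d+1}(z_v, -p_v \cdot z_v)$; direct calculation, again using the factorisation of $q_v$, verifies both tangency and the spherical rigidity equations.

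The main (in fact only) obstacle is the algebraic bookkeeping in the well-definedness step, but it reduces to routine substitutions using $q_v = [q_v]_{d+1}(p_v, 1)$, $\|q_v\|^2 = 1$, and $[q_v]_{d+1} \neq 0$. An alternative plan, more in the spirit of the paper's coning arguments, would embed both frameworks as coned $(d+1)$-dimensional frameworks with cone vertex at the origin and appeal to the projective invariance of the coning rigidity matrix used in \Cref{p:slide}; the substitution $y'_v := y_v - y_o$ decouples the kernels and shows that each coned rigidity matrix has rank $|V|$ plus $\rank \diff s_{G,d}(q)$ or $\rank \diff f_{G,d}(p)$ respectively, with the scaling $q_v \mapsto (p_v,1)$ providing the equivalence.
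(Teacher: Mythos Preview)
Your proof is correct and follows essentially the same approach as the paper: the paper's map $\varphi$ is exactly your $\Phi$ extended to the whole tangent space $T_q(\mathbb{S}_\mathbb{C}^d)^V$, and the paper proves in one computation the identity $(p_v-p_w)\cdot(\varphi(u)_v-\varphi(u)_w) = -(q_v\cdot u_w + q_w\cdot u_v)/([q_v]_{d+1}[q_w]_{d+1})$, which simultaneously gives both directions of your well-definedness and surjectivity arguments. The only cosmetic difference is that the paper states $\varphi$ is bijective on the full tangent spaces (using the tangency condition to recover $[y_v]_{d+1}$ from $y_v^{(d)}$, exactly as in your injectivity step) and then reads off the kernel correspondence, whereas you restrict to kernels from the outset and give an explicit inverse.
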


\begin{proof}
	Define the bijective linear map
	\begin{align*}
		\varphi \colon T_q (\mathbb{S}_{\mathbb{C}}^{d})^{V} \rightarrow (\mathbb{C}^{d})^V, ~ x \mapsto \left( \frac{[x_v]_1}{[q_v]_{d+1}},\ldots, \frac{[x_v]_d}{[q_v]_{d+1}} \right)_{v \in V}.
	\end{align*}
	For any $u \in T_q (\mathbb{S}_{\mathbb{C}}^{d})^{V}$ we have
	\begin{eqnarray*}
		&~& (p_v-p_w)\cdot (\varphi(u)_v- \varphi(u)_w)\\
		&=& \sum_{i=1}^d \frac{[q_v]_i[u_v]_i}{[q_v]^2_{d+1}} +\frac{[q_w]_i[u_w]_i}{[q_w]^2_{d+1}} - \frac{[q_v]_i[u_w]_i + [q_w]_i[u_v]_i}{[q_v]_{d+1}[q_w]_{d+1}} \\
		&=& -\frac{[q_v]_{d+1}[u_v]_{d+1}}{[q_v]^2_{d+1}}-\frac{[q_w]_{d+1}[u_w]_{d+1}}{[q_w]^2_{d+1}} + \frac{[q_v]_{d+1}[u_w]_{d+1} +[q_w]_{d+1}[u_v]_{d+1}}{[q_v]_{d+1}[q_w]_{d+1}} - \frac{q_v\cdot u_w + q_w \cdot u_v}{[q_v]_{d+1}[q_w]_{d+1}} \\
		&=& - \frac{q_v \cdot u_w + q_w \cdot u_v}{[q_v]_{d+1}[q_w]_{d+1}}.
	\end{eqnarray*}
		It follows that $u \in \ker \diff s_{G,d}(q)$ if and only if $\varphi(u) \in \ker \diff f_{G,d}(p)$.
		As $\varphi$ is bijective,
		we have $\rank \diff f_{G,d}(p)= \rank \diff s_{G,d}(q)$ as required.
\end{proof}

From \Cref{l:equivrank} we know that a graph is rigid (respectively, independent) in $\mathbb{R}^d$ if and only if it is rigid (respectively, independent) in $\mathbb{S}^d$.
Due to this, we can obtain an analogue of \Cref{l:domind} for the map $s_{G,d}$.

\begin{lemma}\label{l:domindsphere}
	Let $G=(V,E)$ be any graph.
	Then the following are equivalent:
	\begin{enumerate}
		\item \label{l:sdomind1} $G$ is $d$-independent.
		\item \label{l:sdomind2} The map $s_{G,d}$ is dominant.	
		\item \label{l:sdomind3} $\ell_d^*(G) = \mathbb{C}^E$.
	\end{enumerate}
\end{lemma}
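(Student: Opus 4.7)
The plan is to reduce the lemma to its Euclidean counterpart \Cref{l:domind} via the rank comparison supplied by \Cref{l:equivrank}. The equivalence \ref{l:sdomind2} $\Leftrightarrow$ \ref{l:sdomind3} is immediate from the definition of dominance: the image of $s_{G,d}$ is Zariski dense in $\mathbb{C}^E$ if and only if its Zariski closure $\ell_d^*(G)$ equals $\mathbb{C}^E$. For \ref{l:sdomind1} $\Leftrightarrow$ \ref{l:sdomind2}, I would follow the template used in the proof of \Cref{l:domind}, applying \Cref{borel91} to recast dominance as the existence of a single point at which the differential attains rank $|E|$.

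For the forward direction \ref{l:sdomind1} $\Rightarrow$ \ref{l:sdomind2}, I would first use $d$-independence together with \Cref{l:domind} to obtain a real $p \in (\mathbb{R}^d)^V$ with $\rank \diff f_{G,d}(p) = |E|$. I would then lift $p$ via a central projection: set $q_v := (p_v, 1)/\alpha_v$, where $\alpha_v$ is a square root of $\|p_v\|^2 + 1$ (nonzero when $p$ is real). This produces $q \in (\mathbb{S}_\mathbb{C}^d)^V$ with every $[q_v]_{d+1} = 1/\alpha_v$ nonzero, and \Cref{l:equivrank} then yields $\rank \diff s_{G,d}(q) = |E|$, so $s_{G,d}$ is dominant by \Cref{borel91}.

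For the reverse direction \ref{l:sdomind2} $\Rightarrow$ \ref{l:sdomind1}, suppose $s_{G,d}$ is dominant. Then \Cref{borel91} provides a non-empty Zariski open $U \subset (\mathbb{S}_\mathbb{C}^d)^V$ on which $\rank \diff s_{G,d}$ equals $|E|$. Since $[q_v]_{d+1}$ is not identically zero on the irreducible variety $\mathbb{S}_\mathbb{C}^d$, the locus where some last coordinate vanishes is a proper Zariski closed subset of the irreducible space $(\mathbb{S}_\mathbb{C}^d)^V$; hence one can pick $q \in U$ with every $[q_v]_{d+1} \neq 0$. Projecting via $[p_v]_i := [q_v]_i / [q_v]_{d+1}$ and invoking \Cref{l:equivrank} then gives $p \in (\mathbb{C}^d)^V$ with $\rank \diff f_{G,d}(p) = |E|$, so $f_{G,d}$ is dominant, and $G$ is $d$-independent by \Cref{l:domind}.

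The only subtlety is the Zariski density of the locus $\{q : [q_v]_{d+1} \neq 0 \text{ for all } v \in V\}$ in $(\mathbb{S}_\mathbb{C}^d)^V$, which is guaranteed by the stated irreducibility of $\mathbb{S}_\mathbb{C}^d$; no serious obstacle arises. The entire argument amounts to transferring \Cref{l:domind} along the projection bridge provided by \Cref{l:equivrank}.
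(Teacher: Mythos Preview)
Your proposal is correct and follows essentially the same approach as the paper: both reduce \ref{l:sdomind1} $\Leftrightarrow$ \ref{l:sdomind2} to \Cref{l:domind} via \Cref{borel91} and the rank bridge \Cref{l:equivrank}, with \ref{l:sdomind2} $\Leftrightarrow$ \ref{l:sdomind3} being immediate from the definition of dominance. The paper's version is terser (it simply invokes \Cref{l:equivrank} without spelling out the lift $q_v=(p_v,1)/\alpha_v$ or the Zariski-density argument for the locus where all last coordinates are nonzero), but your added detail is sound and fills in exactly what the paper leaves implicit.
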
 

\begin{proof}
	By the definition of a dominant map, \ref{l:sdomind2} and \ref{l:sdomind3} are equivalent.
	Hence, we only need to prove that \ref{l:sdomind1} and \ref{l:sdomind2} are equivalent.
	By \Cref{l:domind},
	we only need to show that $s_{G,d}$ is dominant if and only if $f_{G,d}$ is dominant.
	This is equivalent to proving the following:
	there exists $p \in (\mathbb{C}^{d})^V$ with $\rank \diff f_{G,d}(p) = |E|$ if and only if
	there exists a $q \in (\mathbb{S}_{\mathbb{C}}^{d})^V$ with $\rank \diff s_{G,d}(p) = |E|$ (see \Cref{borel91}).
	Hence, the result holds by \Cref{l:equivrank}.
\end{proof}

\subsection{Defining the spherical \texorpdfstring{$d$-realisation}{d-realisation} number}

Let $G=(V,E)$ be a graph with at least $d+1$ vertices and fix a sequence of $d$ vertices $v_1,\ldots, v_d$.
We now define the algebraic set
\begin{align}\label{eq:yset}
	Y_{G,d} := \left\{ p \in (\mathbb{S}^{d}_\mathbb{C})^V : [p_{v_k}]_j=0 \text{ if } j \geq k + 1 \text{ and }  [p_{v_1}]_1=1 \right\}
\end{align}
Since $Y_{G,d}$ is a connected smooth manifold,
it is irreducible.
We further note that $Y_{G,d}$ has dimension $d|V| - \binom{d+1}{2}$.
With this, we define the morphism
\begin{align*}
	\tilde{s}_{G,d} : Y_{G,d} \rightarrow \ell^*_d(G), ~ p \mapsto s_{G,d}(p),
\end{align*}
i.\,e., the restriction of $s_{G,d}$ to the domain $Y_{G,d}$ and the codomain $\ell^*_d(G)$.
Since every $d$-dimensional realisation of $G$ is equivalent to at least one element of $Y_{G,d}$ it follows that $\tilde{s}_{G,d}$ is a dominant map.

The methods utilised in \Cref{l:xgd} can easily be adapted to work for realisations in the $d$-dimensional sphere.

\begin{lemma}\label{l:ygd}
	Let $G=(V,E)$ be a graph with $|V| \geq d+1$,
	and fix a sequence of $d$ vertices $v_1,\ldots, v_d$.
	Then the image of $\tilde{s}_{G,d}$ is Zariski dense in the image of $s_{G,d}$ (and so $\tilde{s}_{G,d}$ is dominant),
	and
	\begin{align*}
		\left|\tilde{s}_{G,d}^{-1}\left(s_{G,d}(p) \right) \right| = 2^{d} |C^*_d(G,p)|
	\end{align*}
	for almost all $p \in (\mathbb{S}_\mathbb{C}^d)^V$.
\end{lemma}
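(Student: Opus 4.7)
The plan is to mirror the proof of \Cref{l:xgd} in the spherical setting, with $Y_{G,d}$ and the complex orthogonal group $O(d+1,\mathbb{C})$ playing the roles of $X_{G,d}$ and the affine congruence group. The first step is a spherical analogue of \Cref{l:rotate}: for every realisation $p \in (\mathbb{S}^d_\mathbb{C})^V$ whose vertex set linearly spans $\mathbb{C}^{d+1}$ and whose bilinear-form Gram matrix built from $p_{v_1},\ldots,p_{v_d}$ has only non-zero leading principal minors, there is a congruent $q \in Y_{G,d}$. This proceeds by a Witt-theorem induction analogous to the one in \Cref{l:rotate}: at step one, since $\|p_{v_1}\|^2 = 1$, there is an $A \in O(d+1,\mathbb{C})$ with $A p_{v_1} = e_1$; at step $k$, one extends the partial isometry already matching $p_{v_1},\ldots,p_{v_{k-1}}$ to the required flag by mapping an appropriate coordinate projection of $p_{v_k}$ into $\mathrm{span}(e_k)$, which is possible exactly when the relevant leading principal minor is non-zero. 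Letting $Z$ denote the proper Zariski closed subset where these non-degeneracy conditions fail, one obtains $s_{G,d}((\mathbb{S}^d_\mathbb{C})^V \setminus Z) \subseteq \tilde{s}_{G,d}(Y_{G,d})$, which yields the claimed Zariski density and hence the dominance of $\tilde{s}_{G,d}$.

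For the fibre count I would follow the case analysis of \Cref{l:xgd}. If $G$ is not $d$-rigid, then \Cref{l:equivrank} combined with the complex extension of \Cref{t:asimowroth} yields, for almost every $p$, a sequence of pairwise-non-congruent equivalent realisations converging to $p$, and lifting these through the spherical rotation lemma makes $\tilde{s}_{G,d}^{-1}(s_{G,d}(p))$ infinite as well. If $G$ is $d$-rigid and $|V| \geq d+2$, I would split into the two subcases according to whether $s_{G,d}(Z)$ is Zariski dense in $\ell^*_d(G)$. The dense subcase is ruled out by the same dimension-and-nullity argument as before, using that $\dim O(d+1,\mathbb{C}) = \binom{d+1}{2} = \dim (\mathbb{S}^d_\mathbb{C})^V - \dim Y_{G,d}$. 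The non-dense subcase, and the easier case $|V| = d+1$ (where $G$ must be $K_{d+1}$), then reduce to counting how many elements of $Y_{G,d}$ are congruent to a fixed general element.

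The decisive stabiliser computation is as follows: if $p,q \in Y_{G,d}$ are congruent via $A \in O(d+1,\mathbb{C})$, then $A e_1 = A p_{v_1} = q_{v_1} = e_1$, so the first column of $A$ equals $e_1$ and orthogonality forces the first row to equal $e_1^T$; thus $A$ has block form $\mathrm{diag}(1,B)$ with $B \in O(d,\mathbb{C})$. The flag conditions defining $Y_{G,d}$ applied to $p_{v_2},\ldots,p_{v_d}$ (together with the analogous conditions obtained from $A^T q = p$) then force $B$ to be simultaneously upper and lower triangular, hence diagonal with diagonal entries $\pm 1$, giving exactly $2^d$ realisations of $Y_{G,d}$ per congruence class. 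Combined with the Zariski density this yields $|\tilde{s}_{G,d}^{-1}(s_{G,d}(p))| = 2^d |C^*_d(G,p)|$ for general $p$. The main obstacle is the spherical rotation lemma itself: because the complex bilinear form $x \cdot y$ is not definite, ``non-zero length'' can fail for non-zero vectors, so one must formulate the non-degeneracy hypothesis carefully and then verify that $Z$ is a proper Zariski closed subset of $(\mathbb{S}^d_\mathbb{C})^V$, which can be checked by exhibiting a single explicit realisation (for example $p_{v_k} = \cos\theta_k\, e_1 + \sin\theta_k\, e_k$ with the $\theta_k$ generic) for which all relevant minors are non-zero.
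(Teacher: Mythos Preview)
Your proposal is correct and follows precisely the route the paper intends: the paper does not spell out a proof of \Cref{l:ygd} but explicitly says ``The methods utilised in \Cref{l:xgd} can easily be adapted to work for realisations in the $d$-dimensional sphere,'' and your adaptation (the Witt-theorem rotation lemma, the same three-case analysis, and the stabiliser count showing that the diagonal subgroup of $O(d+1,\mathbb{C})$ fixing $e_1$ has exactly $2^d$ elements) is exactly that adaptation. Your handling of the pinned condition $[p_{v_1}]_1=1$ in $Y_{G,d}$, which forces the $(1,1)$ entry of $A$ to be $+1$ rather than $\pm1$ and hence yields $2^d$ rather than $2^{d+1}$, is the one place where the spherical argument differs in detail from \Cref{l:xgd}, and you have it right.
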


The spherical version of \Cref{p:dom} is also proved in an analogous way.

\begin{proposition}\label{p:domsphere}
	Let $G=(V,E)$ be a graph with $|V| \geq d+1$.
	Then the following are equivalent:
	\begin{enumerate}
		\item\label{p:domsphere1} $G$ is $d$-rigid.
		\item\label{p:domsphere2} The map $\tilde{s}_{G,d}$ is generically finite.
		\item\label{p:domsphere3} There exists an $n \in \mathbb{N}$ and a non-empty Zariski open subset $U \subset (\mathbb{S}_\mathbb{C}^d)^V$ where $ |C^*_d(G,p)|=n$ for all $p \in U$.
		Furthermore,
		if $p \in U$ and $q$ is an equivalent $d$-dimensional spherical realisation of $G$,
		then $\rank \diff s_{G,d}(q) =  d|V| - \binom{d+1}{2}$.
	\end{enumerate}
\end{proposition}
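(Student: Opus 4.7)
The plan is to follow essentially the same blueprint as the proof of Proposition \ref{p:dom}, but substituting the spherical ingredients that have been developed in this section. The key tools will be Lemma \ref{l:ygd}, which provides that $\tilde{s}_{G,d} \colon Y_{G,d} \to \ell_d^*(G)$ is a dominant morphism between irreducible varieties with $|\tilde{s}_{G,d}^{-1}(s_{G,d}(p))| = 2^d |C_d^*(G,p)|$ for almost all $p$; Lemma \ref{l:equivrank}, which identifies $\rank \diff s_{G,d}$ at a spherical realisation with $\rank \diff f_{G,d}$ at its central projection; Theorem \ref{t:asimowroth}, which gives the dimensional criterion for $d$-rigidity; and Theorem \ref{t:deg}, which lists the equivalent conditions for generic finiteness of a dominant morphism of varieties.

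First I would observe that $\dim Y_{G,d} = d|V| - \binom{d+1}{2}$, and that for a general $p \in (\mathbb{S}_{\mathbb{C}}^d)^V$ we have $\dim \ell_d^*(G) = \rank \diff s_{G,d}(p)$. By Lemma \ref{l:equivrank} this rank coincides with $\rank \diff f_{G,d}(\bar{p})$ at the point $\bar{p} \in (\mathbb{C}^d)^V$ obtained by dehomogenisation; hence by Theorem \ref{t:asimowroth}, $G$ is $d$-rigid if and only if $\dim \ell_d^*(G) = d|V| - \binom{d+1}{2} = \dim Y_{G,d}$. Applying Theorem \ref{t:deg} to the dominant morphism $\tilde{s}_{G,d}$ then yields the equivalence of \ref{p:domsphere1} and \ref{p:domsphere2} and, in the rigid case, produces a nonempty Zariski open $U' \subset Y_{G,d}$ on which the fibres of $\tilde{s}_{G,d}$ have constant size $k$ and on whose preimages $\diff s_{G,d}$ achieves rank $\dim \ell_d^*(G)$.

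To transfer this information from $Y_{G,d}$ to the ambient space $(\mathbb{S}_{\mathbb{C}}^d)^V$ and establish \ref{p:domsphere3}, I would use that generic finiteness bounds the dimension of the image of the proper closed complement $Y_{G,d} \setminus U'$, so its Zariski closure $W$ is a proper subvariety of $\ell_d^*(G)$. Taking $U$ to be $s_{G,d}^{-1}(\ell_d^*(G) \setminus W)$ intersected with the Zariski open set from Lemma \ref{l:ygd}, every $p \in U$ satisfies both $|\tilde{s}_{G,d}^{-1}(s_{G,d}(p))| = k$ (since every element of this fibre must lie in $U'$) and $|\tilde{s}_{G,d}^{-1}(s_{G,d}(p))| = 2^d |C_d^*(G,p)|$, forcing $|C_d^*(G,p)| = k/2^d =: n$. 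For the rank assertion, any equivalent spherical realisation $q$ of $p$ is congruent via some $A \in O(d+1,\mathbb{C})$ to some $q' \in Y_{G,d}$ that lies in the fibre over $s_{G,d}(p)$ and hence in $U'$; since left multiplication by $A$ preserves the rank of $\diff s_{G,d}$, we obtain the required value $d|V| - \binom{d+1}{2}$. The converse \ref{p:domsphere3} $\Rightarrow$ \ref{p:domsphere2} is immediate from Lemma \ref{l:ygd}. The most delicate step is the transfer from fibres of $\tilde{s}_{G,d}$ to congruence classes in the ambient space, but this is handled cleanly by combining the image-dimension count with the saturating open set from Lemma \ref{l:ygd}.
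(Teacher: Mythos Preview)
Your proposal is correct and follows essentially the same approach as the paper, which simply states that the spherical version is proved in an analogous way to Proposition~\ref{p:dom}. In fact you supply more detail than the paper's terse proof of Proposition~\ref{p:dom} itself: the transfer of the Zariski open set from $Y_{G,d}$ to the ambient space $(\mathbb{S}_{\mathbb{C}}^d)^V$ via the image-dimension bound, and the rank argument for arbitrary equivalent realisations via congruence, are both steps the paper leaves implicit when it writes ``the result now holds by applying Theorem~\ref{t:deg}''. The only extra ingredient you need beyond the Euclidean argument is Lemma~\ref{l:equivrank} to identify $\dim \ell_d^*(G)$ with the Euclidean rigidity-matrix rank, and you invoke it correctly.
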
 

Using \Cref{p:domsphere} we can now make the following well-defined definition of the spherical $d$-realisation number for a minimally $d$-rigid graph.

\begin{definition}\label{def:countsphere}
	The \emph{spherical $d$-realisation number} of a graph $G=(V,E)$ is an element of $\mathbb{N} \cup \{\infty\}$ given by
	\begin{align*}
		c^*_d(G) :=		
		\begin{cases}
			|C^*_d(G,p)| \text{ for a general realisation $p \in (\mathbb{S}_\mathbb{C}^d)^V$} &\text{if } |V| \geq d+1, \\
			 1 &\text{if } |V| \leq d \text{ and $G$ is complete}, \\
			 \infty &\text{if } |V| \leq d \text{ and $G$ is not complete}.
		\end{cases}
	\end{align*}
\end{definition}

It follows from \Cref{p:domsphere} that a graph $G$ is $d$-rigid if and only if $c_d(G) < \infty$.
It is worth noting that,
while $c^*_d(G) < \infty$ if and only if $c_d(G) < \infty$,
this does not mean the two numbers are always equal.
This can be seen in the following example.

\begin{example}
  We consider the 3-prism graph.
  It is minimally 2-rigid with six vertices, all of which have degree three.
  For this graph we have $c_2(G)=12$, which can be computed with the algorithm described in \cite{PlaneCount}.
  In fact, there exist edge-length assignments for which we also obtain exactly 12 real realisations (see \Cref{fig:threeprismplane}).
  However, the same graph embedded on the sphere gives $c_2^*(G)=16$,
  which can be computed by the algorithm described in \cite{SphereCount}.
  Again there exist edge-length assignments such that each realisation is real (see \Cref{fig:threeprismsphere}).
  (Both algorithms use the alternative definitions for realisation numbers and so the outputted numbers first need to be halved.)
\end{example}

The natural analogue of \Cref{cor:realstuff} also holds for spheres by a similar method.

\begin{corollary}\label{cor:sphrealstuff}
	For each graph $G=(V,E)$,
	there exists a non-empty Zariski open subset $U_\mathbb{R} \subset (\mathbb{S}^d)^V$ of real $d$-dimensional spherical realisations $p$ where $|C^*_d(G,p)| = c^*_d(G)$.
\end{corollary}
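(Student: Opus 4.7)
The plan is to mirror the proof of Corollary~\ref{cor:realstuff} as closely as possible, replacing the role of $(\mathbb{R}^d)^V \subset (\mathbb{C}^d)^V$ by $(\mathbb{S}^d)^V \subset (\mathbb{S}_\mathbb{C}^d)^V$. As before, I split into three cases. If $G$ is complete with $|V| \leq d$, then $c_d^*(G) = 1$ and $U_\mathbb{R} = (\mathbb{S}^d)^V$ works trivially. If $G$ is not $d$-rigid, then by Theorem~\ref{t:sphereclassic} the set of rigid real spherical frameworks has Lebesgue-measure-zero complement; equivalently, for almost every $p \in (\mathbb{S}^d)^V$ the fibre $s_{G,d}^{-1}(s_{G,d}(p))$ is infinite modulo congruence, so $|C_d^*(G,p)| = \infty = c_d^*(G)$ on a cofinite Zariski open subset.

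The only non-trivial case is when $G$ is $d$-rigid with $|V| \geq d+1$. Here, Proposition~\ref{p:domsphere} provides a non-empty Zariski open $U \subset (\mathbb{S}_\mathbb{C}^d)^V$ on which $|C_d^*(G,p)| = c_d^*(G)$. I set $U_\mathbb{R} := U \cap (\mathbb{S}^d)^V$, which is automatically Zariski open in $(\mathbb{S}^d)^V$ (with the subspace Zariski topology inherited from $(\mathbb{C}^{d+1})^V$). The entire content of the proof is then showing $U_\mathbb{R} \neq \emptyset$, which will follow from proving the spherical analogue of Lemma~\ref{l:realzar}: \emph{if $W$ is a non-empty Zariski open subset of $(\mathbb{S}_\mathbb{C}^d)^V$, then $W \cap (\mathbb{S}^d)^V$ is a non-empty Zariski open subset of $(\mathbb{S}^d)^V$.}

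To prove this density lemma, I would follow the template of Lemma~\ref{l:realzar}. Let $F$ be the ideal in $\mathbb{C}[\{X_{v,i}\}]$ of polynomials vanishing on the complement of $W$; it is enough to exhibit some $p \in (\mathbb{S}^d)^V$ and $f \in F$ with $f(p) \neq 0$. Writing $f = f_{\mathcal{R}} + i f_{\mathcal{I}}$ with real-coefficient polynomials $f_{\mathcal{R}}, f_{\mathcal{I}}$, and recalling that the vanishing ideal of $\mathbb{S}_\mathbb{C}^d$ in $\mathbb{C}[X_1,\ldots,X_{d+1}]$ is generated by the real polynomial $\sum_i X_i^2 - 1$ (an irreducible polynomial, hence a prime ideal), it suffices to argue that a real polynomial vanishing on $(\mathbb{S}^d)^V$ also vanishes on $(\mathbb{S}_\mathbb{C}^d)^V$. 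This reduces to the one-variable-block statement that any $g \in \mathbb{R}[X_1,\ldots,X_{d+1}]$ vanishing on $\mathbb{S}^d$ is a real multiple of $\sum_i X_i^2 - 1$, which follows because $\mathbb{S}_\mathbb{C}^d$ is an irreducible smooth variety of complex dimension $d$ containing the smooth real point $(1,0,\ldots,0) \in \mathbb{S}^d$, and $\mathbb{S}^d$ is a totally real submanifold of real dimension $d$ through this point (so by the identity theorem for holomorphic functions, any polynomial vanishing on $\mathbb{S}^d$ must vanish on a neighbourhood in $\mathbb{S}_\mathbb{C}^d$, and hence on the whole irreducible variety $\mathbb{S}_\mathbb{C}^d$).

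The main obstacle, and really the only step requiring genuine work, is establishing this Zariski density of $(\mathbb{S}^d)^V$ in $(\mathbb{S}_\mathbb{C}^d)^V$; everything else is formal bookkeeping. The obstacle is mild because $\mathbb{S}^d$ has real dimension equal to the complex dimension of the irreducible variety $\mathbb{S}_\mathbb{C}^d$, so standard real algebraic geometry applies. Once the density is in hand, the proof concludes by taking any $p \in U_\mathbb{R}$ and invoking Proposition~\ref{p:domsphere}\ref{p:domsphere3} to deduce $|C_d^*(G,p)| = c_d^*(G)$.
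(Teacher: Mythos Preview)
Your approach is correct and is precisely the ``similar method'' the paper alludes to: adapt the proof of Corollary~\ref{cor:realstuff} by replacing Lemma~\ref{l:realzar} with the Zariski density of $(\mathbb{S}^d)^V$ in $(\mathbb{S}_\mathbb{C}^d)^V$, which you justify via the totally-real/identity-theorem argument. Two small slips to fix: in the non-rigid case your sentence is inverted (you want that the set of \emph{non}-rigid real spherical frameworks has measure-zero complement, not the rigid ones), and ``cofinite'' is not the right word --- the complement is a proper algebraic subset, not a finite set.
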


\begin{remark}
	It is currently open whether every graph $G$ has a real generic $d$-dimensional spherical realisation that is equivalent (modulo congruences) to exactly $c_d^*(G)$ real spherical realisations.
	The counter-example used in the previous section for real realisations in the plane required the existence of a graph $H$ where $c_2(H)$ is odd.
	Similarly, any graph $G$ where $c^*_2(G)$ is odd would be a counter-example to the result, however no such graph has yet been found.
\end{remark}

We finish the section by observing that the spherical realisation number is always equal to the realisation number when $d=1$,
which can be proven using the same method as \Cref{p:1d}.

\begin{proposition}\label{p:1dsame}
	For any graph $G$ we have $c_1^*(G) =c_1(G)$.
\end{proposition}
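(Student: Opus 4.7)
The plan is to mirror the proof of Proposition \ref{p:1d} in the spherical setting. It suffices to establish the spherical analogues of the two facts underlying Proposition \ref{p:1d}: (a) if $H$ is formed from connected graphs $H_1, H_2$ by identifying a single vertex $v$, then $c_1^*(H) = 2\,c_1^*(H_1)\,c_1^*(H_2)$; and (b) $c_1^*(G) = 1$ whenever $G$ is biconnected. These immediately yield $c_1^*(G) = 2^{k-1} = c_1(G)$ for any connected $G$ with $k$ biconnected components. For disconnected $G$, both $c_1(G)$ and $c_1^*(G)$ equal $\infty$, since a graph is 1-rigid (equivalently, spherically 1-rigid, via \Cref{l:equivrank}) if and only if it is connected.

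For the gluing lemma (a), I would take a generic realisation $p \in (\mathbb{S}^1_\mathbb{C})^{V(H)}$ and apply a rotation in $O(2,\mathbb{C})$ to move $p_v$ to a chosen base point $p_0$. The stabiliser of $p_0$ in $O(2,\mathbb{C})$ has order $2$, generated by the reflection through the line spanned by $p_0$; this plays exactly the role of $\{\pm I\}$ fixing a point on $\mathbb{R}^1$ in the proof of \Cref{p:1d}. Each subgraph $H_i$ contributes $2\,c_1^*(H_i)$ equivalence classes of restrictions that pin $v$ to $p_0$ (we quotient by rotations only, retaining the reflection). Combining the two factors independently and then quotienting by the order-$2$ stabiliser of $p_0$ yields the claimed factor of $2$ and the product.

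For the base case (b), I would use the rational parameterisation of $\mathbb{S}^1_\mathbb{C}$ by $t \in \mathbb{C}^*$ via $t \mapsto ((t+t^{-1})/2,\,(t-t^{-1})/(2i))$. Under this parameterisation the chord distance between two points with parameters $s, t$ depends only on $s/t$ up to inversion, so each edge $\{a,b\}$ enforces $t_a/t_b \in \{u, u^{-1}\}$ for the corresponding $u \in \mathbb{C}^*$. For a cycle $C_n$, after fixing $t_1 = 1$ by rotation, the closure condition $\prod_{i=1}^n u_i^{\epsilon_i} = 1$ generically admits only the patterns $\epsilon = (1,\ldots,1)$ and $\epsilon = (-1,\ldots,-1)$, which are swapped by the surviving reflection $t \mapsto t^{-1}$; hence $c_1^*(C_n) = 1$. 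A biconnected graph admits an ear decomposition starting from a cycle, and adding an ear with distinct endpoints $u, v$ introduces a closure $\prod u_i^{\epsilon_i} = t_v/t_u$ whose generic solution is unique (the all-flipped alternative would require $t_v/t_u = (t_v/t_u)^{-1}$, failing generically), so $c_1^*$ is preserved under each ear addition.

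The main obstacle is verifying (b) carefully via the ear decomposition: one must ensure that the inductive genericity assumption propagates so that at each stage the ratios $t_v/t_u$ inherited from the previously constructed subgraph are generic enough both to exclude the all-flipped solution and to prevent spurious sign cancellations in the product $\prod_{i \in S} u_i^{2} = 1$ over proper subsets $S$ of the ear's edges. This is handled by noting that these are algebraic conditions on the edge lengths which fail on a proper Zariski closed subset, and the genericity of $p$ places it in the complement.
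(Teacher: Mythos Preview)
Your proposal is correct and takes the same approach as the paper, whose proof consists solely of the remark that the result ``can be proven using the same method as \Cref{p:1d}''. You supply considerably more detail than the paper does (via the parameterisation of $\mathbb{S}^1_\mathbb{C}$ by $\mathbb{C}^*$ and an ear-decomposition argument), but the skeleton---the gluing formula at a cut vertex together with $c_1^*(G)=1$ for biconnected $G$---is identical.
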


\begin{remark}\label{rem:hyperbolic}
	All of the results in this section can be extended from the sphere to the light cone of any pseudo-Euclidean space.
	To be specific,
	we define for each $1 \leq k \leq d$ the quadratic form
	\begin{align*}
		q_{d,k}: \mathbb{C}^{d+1} \rightarrow \mathbb{C}, ~ (x_0,\ldots,x_{d}) \mapsto \sum_{j=0}^k x_j^2 - \sum_{j=k+1}^d x_j^2,
	\end{align*}
	and we define the light cone
	\begin{align*}
		\mathbb{H}_{d,k} := \left\{ x \in \mathbb{C}^{d+1} : q_{d,k}(x) = 0 \right\}.
	\end{align*}
	The map
	\begin{align*}
		T:\mathbb{H}_{d,k} \rightarrow \mathbb{H}_{d,d}, ~ (x_0,\ldots,x_d) \mapsto (x_0,\ldots, x_k, ix_{k+1},\ldots,x_d)
	\end{align*}
	is an isometry (in the sense that $q_{d,d}(T(x) - T(y)) = q_{d,k}(x-y)$),
	and the space $\mathbb{H}_{d,d}$ is isomorphic to $\mathbb{S}_{\mathbb{C}}^d$.
	Hence, the realisation number of a graph $G$ in any space $\mathbb{H}_{d,k}$ is equal to $c^*_d(G)$.
	Notably,
	the hyperbolic realisation number of a graph (equivalently, the space $\mathbb{H}_{d,d-1}$) is equal to the spherical realisation number of a graph. 
\end{remark}

\section{Coning and the spherical realisation number}\label{sec:cone}

In this section we prove \Cref{t:conecount} by observing that the number of realisations of a framework is projectively invariant (\Cref{kl:proj}).
We then prove that $c_d(G*o) = c_d^*(G*o)$ for any coned graph $G*o$ (\Cref{t:conecount2}).

\subsection{Proof of \texorpdfstring{\Cref{t:conecount}}{Theorem 1.2}}

Throughout this subsection we fix the vertices used to define our various varieties in the following way.
Let $G=(V,E)$ be a graph with $|V| \geq d+1$ and let $G*o$ be its cone.
We fix the vertices $v_1,\ldots, v_d \in V$ to define the set $X_{G,d}$ as given in \Cref{eq:xset}.
From this, we fix the set
\begin{align*}
	X_{G*o,d+1} = \left\{ p' \in (\mathbb{C}^{d+1})^{V*o} : p'_o = \mathbf{0}, ~  [p'_{v_k}]_{j+1}=0 \text{ if } k \leq j \leq d  \right\}\,.
\end{align*}
(This is equivalent to defining the set $X_{G*o,d+1}$ as in \Cref{eq:xset} with the vertices $w_1,\ldots,  w_{d+1}$ by setting $w_1 = o$ and $w_{j+1} = v_j$ for all $j =1,\ldots, d$.)
We also assume that the fixed vertices $v_1,\ldots,v_d$ used to define $X_{G,d}$ and $Y_{G,d}$ (see \Cref{eq:yset}) are always identical.

\begin{lemma}\label{kl:proj}
	Given a graph $G=(V,E)$ with $|V| \geq d+1$,
	choose any $p \in Y_{G,d}$ and $(r_v)_{v \in V} \in (\mathbb{C}\setminus \{0\})^{V}$.
	Define $p' \in X_{G*o,d+1}$ with $p'_v = (r_vp_v,0)$ for each $v \in V$ and $p'_o=\mathbf{0}$.
	Then
	\begin{align*}
		\left| \tilde{f}_{G*o,d+1}^{-1}\left(\tilde{f}_{G*o,d+1}(p') \right) \right| = 2 \left| \tilde{s}_{G,d}^{-1}\Big(\tilde{s}_{G,d}(p) \Big) \right|.
	\end{align*} 
\end{lemma}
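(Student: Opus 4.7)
The plan is to exhibit an explicit bijection
\[
\Psi \colon \tilde{s}_{G,d}^{-1}\bigl(\tilde{s}_{G,d}(p)\bigr) \times \{+1,-1\} \;\longrightarrow\; \tilde{f}_{G*o,d+1}^{-1}\bigl(\tilde{f}_{G*o,d+1}(p')\bigr),
\]
sending $(q,\epsilon)$ to the realisation $q'$ with $q'_o = \mathbf{0}$ and $q'_v = \epsilon r_v q_v$ for $v \in V$. Intuitively this implements the rigidity-preserving radial scaling of \Cref{p:slide} at the level of fibres: every spherical realisation $q$ in the spherical fibre lifts to a scaled Euclidean realisation of the coned graph, and the pair $\{+1,-1\}$ absorbs the only asymmetry between the normalised spaces $Y_{G,d}$ and $X_{G*o,d+1}$, namely the condition $[q_{v_1}]_1 = 1$ imposed in the former but not in the latter.

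Well-definedness splits into two checks. The zero-coordinate constraints $[q_{v_k}]_j = 0$ for $j \geq k+1$ in $Y_{G,d}$ coincide with the corresponding constraints on $q'_{v_k}$ in $X_{G*o,d+1}$, and scaling by $\epsilon r_{v_k}$ preserves zero entries. For edge lengths, $\|q_v\|^2 = 1$ gives $\|q'_v\|^2 = r_v^2 = \|p'_v\|^2$ on every edge $ov$, while for $vw \in E$ expanding yields
\[
\|q'_v - q'_w\|^2 = r_v^2 + r_w^2 - 2 r_v r_w (q_v \cdot q_w),
\]
so the spherical identity $q_v \cdot q_w = p_v \cdot p_w$ (equivalent to $s_{G,d}(q)_{vw} = s_{G,d}(p)_{vw}$) gives the matching equality.

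For injectivity, if $\Psi(q,\epsilon) = \Psi(q'',\epsilon')$ then $\epsilon q_v = \epsilon' q''_v$ for all $v$; evaluating at $v_1$ and using $[q_{v_1}]_1 = [q''_{v_1}]_1 = 1$ forces $\epsilon = \epsilon'$, and hence $q = q''$. For surjectivity, given $q' \in X_{G*o,d+1}$ in the Euclidean fibre, the edge-$ov$ condition forces $\|q'_v\|^2 = r_v^2$, so $\hat{q}_v := q'_v/r_v$ lies in $\mathbb{S}^d_{\mathbb{C}}$. The $X_{G*o,d+1}$ constraints on $\hat q_{v_k}$ coincide with the $Y_{G,d}$ zero-coordinate constraints, while the unit-sphere identity on $\hat{q}_{v_1}$ forces $[\hat{q}_{v_1}]_1 = \pm 1$; setting $(q,\epsilon) = (\hat q, +1)$ or $(-\hat q, -1)$ according to this sign produces the preimage, and reversing the above edge-length expansion (together with $s_{G,d}(-\hat q) = s_{G,d}(\hat q)$) places $q$ in the spherical fibre.

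The main subtlety is bookkeeping: aligning the zero-coordinate structures of $X_{G*o,d+1}$ and $Y_{G,d}$, and identifying which single normalisation breaks the $\pm$ symmetry. This is exactly what produces the factor $2$ rather than something larger like $2^{d+1}$; all other steps reduce to routine bilinear-form manipulations of the sort already established in \Cref{sec:prelim,sec:count,sec:countsphere}.
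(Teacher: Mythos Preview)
Your proof is correct and follows essentially the same approach as the paper. The paper splits the Euclidean fibre into the subset $S$ with $[q'_{v_1}]_1 = r_{v_1}$ and its negation, then builds mutually inverse maps between $S$ and the spherical fibre; you package the identical construction as a single bijection $\tilde{s}_{G,d}^{-1}(\tilde{s}_{G,d}(p)) \times \{\pm 1\} \to \tilde{f}_{G*o,d+1}^{-1}(\tilde{f}_{G*o,d+1}(p'))$, which is a slightly cleaner presentation of the same idea.
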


\begin{proof}
	Define $\lambda := \tilde{s}_{G,d}(p)=\left( 1 - p_v \cdot p_w\right)_{vw\in E}$ and $\lambda' := \tilde{f}_{G*o,d+1}(p')$.
	We first note that for any $vw \in E$ we have
	\begin{align}\label{eq:lambda}
		\lambda_{vw}' = \frac{1}{2}\|p'_v- p'_w\|^2 = \frac{1}{2} \|p'_v\|^2 + \frac{1}{2} \|p'_w\|^2 - p'_v \cdot p'_w = \lambda'_{ov} + \lambda'_{ow} + (r_vr_w)(\lambda_{vw}-1).
	\end{align}
	Define $S \subset \tilde{f}_{G*o,d+1}^{-1}(\lambda')$ to be the set of realisations $q' \in X_{G*o,d+1}$ where $[q'_{v_1}]_1=r_{v_1}$.
	Note that for each $q' \in \tilde{f}_{G*o,d+1}^{-1}(\lambda')$ we have that
	\begin{align*}
		[q'_{v_1}]_1^2 = \|q'_{v_1} \|^2 = \|q'_{v_1} -q'_o \|^2 = \|p'_{v_1} -p'_o \|^2 = \|r_{v_1} p_{v_1}\|^2 = r_{v_1}^2;
	\end{align*}
	the first equality follows from $q' \in X_{G*o,d+1}$,
	the second from $ov_1 \in E*o$,
	the third from $ \tilde{f}_{G*o,d+1}(q' ) = \tilde{f}_{G*o,d+1}(p')$,
	the fourth from the construction of $p'$ from $p$,
	and the last from $p \in Y_{G,d}$.
	Hence, if $q' \in \tilde{f}_{G*o,d+1}^{-1}(\lambda') \setminus S$ then we have $[q'_{v_1}]_1 = -r_{v_1}$,
	and so $-q' \in S$,
	since 
	As $S \cap (-S) = \emptyset$ and $S \cup (-S) = \tilde{f}_{G*o,d+1}^{-1}(\lambda')$,
	we have that $|\tilde{f}_{G*o,d+1}^{-1}(\lambda')| = 2|S|$ (note that this cardinality need not be finite).
	It now suffices to prove that $|\tilde{s}_{G,d}^{-1}(\lambda)| = |S|$.
	
	Choose any $q \in \tilde{s}_{G,d}^{-1}(\lambda)$ and define $q'\in X_{G*o,d+1}$ by setting $q'_o=\mathbf{0}$ and $q'_v = r_v q_v$ for each $v \in V$.
	It is immediate that $[q'_{v_1}]_1 = r_{v_1}$ and $\frac{1}{2}\|q'_v\|^2 = \lambda'_{ov}$.
	For each $vw \in E$ we have
	\begin{align*}
		 \frac{1}{2}\|q'_v- q'_w\|^2 &=  \frac{1}{2}\|q'_v\|^2 +  \frac{1}{2}\|q'_w\|^2 - q'_v\cdot q'_w \\
		&= \lambda'_{ov} + \lambda'_{ow} + (r_vr_w)(\lambda_{vw}-1) \\
		&\stackrel{_\eqref{eq:lambda}}{=} \lambda'_{vw},
	\end{align*}
	and so $q' \in S$.
	Hence, $|\tilde{s}_{G,d}^{-1}(\lambda)| \leq |S|$.
	
	Now choose any $q' \in S$ and define $q \in (\mathbb{C}^{d+1})^V$ by setting $q_v = q'_v/r_v$ for each $v\in V$.
	We first note that $\|q_v\|^2 = \|q'_v - q'_o\|^2/r^2_v = 1$ for each $v \in V$,
	hence $q \in Y_{G,d}$.
	For each $vw \in E$ we have
	\begin{align*}
		q_v\cdot q_w &= \frac{q'_v\cdot q'_w}{r_vr_w} \\
		&= \frac{\|q'_v\|^2 + \|q'_w\|^2 - \|q'_v-q'_w\|^2}{2 r_vr_w} \\
		&= \frac{\lambda'_{ov} + \lambda'_{ow} - \lambda'_{vw}}{r_vr_w} \\
		&\stackrel{_\eqref{eq:lambda}}{=} 1 - \lambda_{vw},
	\end{align*}
	and so $q \in \tilde{s}_{G,d}^{-1}(\lambda)$.
	Hence, $|S |\leq |\tilde{s}_{G,d}^{-1}(\lambda)|$.
\end{proof}

A nice corollary of \Cref{kl:proj} is that the set $C_{d+1}(G*o,p)$ is projectively invariant,
in the sense that projecting each point $p_v$ along the complex line through the points $\{p_v,p_o\}$ by a non-zero complex scalar does not alter the number of equivalent realisations.

We are now ready to prove our first main theorem.

\begin{proof}[Proof of \Cref{t:conecount}]
	If $G=(V,E)$ is not $d$-rigid then the result follows from \Cref{t:cone}.
	If $|V| \leq d$ and $G$ is complete then $|V*o| \leq d+1$ and $G*o$ is complete,
	and so $c_{d+1}(G*o) = c^*_d(G)$.
	Suppose that $G=(V,E)$ is $d$-rigid with $|V| \geq d+1$.	
	As $G$ is $d$-rigid, the coned graph $G*o$ is $(d+1)$-rigid by \Cref{t:cone}.
	By \Cref{l:xgd} and \Cref{p:dom},
	there exists a Zariski open set $U' \subset (\mathbb{C}^{d+1})^{V*o}$ such that
	\begin{align}\label{e:conenumber}
		\left| \tilde{f}_{G*o,d+1}^{-1} \left(\tilde{f}_{G*o,d+1}(p') \right) \right| = 2^{d+1} c_{d+1}(G*o)
	\end{align}
	for all $p' \in U'$.
	Similarly, it follows from \Cref{l:ygd} and \Cref{p:domsphere} that there exists a Zariski open set $U \subset (\mathbb{S}_\mathbb{C})^{d}$ such that
	\begin{align}\label{e:spherenumber}
		\left| \tilde{s}_{G,d}^{-1} \Big(\tilde{s}_{G,d}(p) \Big) \right| = 2^{d} c_{d}^*(G)
	\end{align}
	for all $p \in U$.
	Fix $U^* := U \times (\mathbb{C} \setminus \{0\})^V$.
	Since $U^*$ is a Zariski open subset of the variety $Y_{G,d} \times \mathbb{C}^V$,
	it is an open and dense subset of $Y_{G,d} \times \mathbb{C}^V$ with respect to the metric topology.
	Similarly,
	since $U'$ is a Zariski open subset of the variety $ X_{G *o,d+1}$,
	it is an open and dense subset of $X_{G *o,d+1}$ with respect to the metric topology.
	
	Define the surjective (and hence dominant) morphism 
	\begin{align*}
		\phi : Y_{G,d} \times \mathbb{C}^V \rightarrow X_{G *o,d+1},
	\end{align*}
	where, given $p'= \phi(p,r)$,
	we have $p'_v=r_vp_v$ for all $v \in V$ and $p'_o=\mathbf{0}$.
	As $\phi$ is dominant and $\dim Y_{G,d} \times \mathbb{C}^V = \dim X_{G *o,d+1}$,
	it follows from \Cref{t:deg} and the inverse mapping theorem for holomorphic maps (see \cite[Theorem 7.5]{FG02}) that the image of any open dense subset of $Y_{G,d} \times \mathbb{C}^V$ (with respect to the metric topology) contains an open subset of $X_{G*o,d+1}$ (with respect to the metric topology).
	Hence, the set $\phi( U^*)$ has a non-empty interior with respect to the metric topology of $X_{G*o,d+1}$.
	Since $U'$ is an open dense subset of $X_{G*o,d+1}$ with respect to the metric topology,
	$\phi( U^*) \cap U'$ contains a realisation $p'$.
	Choose $p \in U$ and $r \in (\mathbb{C}\setminus \{0\})^V$ such that $\phi(p,r) = p'$.
	By \Cref{e:conenumber,e:spherenumber} and \Cref{kl:proj},
	we have that
	\begin{align*}
		2^{d+1} c_{d+1}(G*o) =\left| \tilde{f}_{G*o,d+1}^{-1}\left(\tilde{f}_{G*o,d+1}(p') \right) \right| = 2 \left| \tilde{s}_{G,d}^{-1}\Big(\tilde{s}_{G,d}(p) \Big) \right|= 2^{d+1} c^*_{d}(G),
	\end{align*}
	and so $c^*_{d}(G) = c_{d+1}(G*o)$.
\end{proof}

\subsection{Repetitive coning stabilises realisation numbers}

Our previous techniques of the section can be repurposed to prove the following interesting result.

\begin{theorem}\label{t:conecount2}
	Let $d$ be a positive integer and let $G*o$ be a coning of a graph $G$.
	Then $c_{d}(G*o) = c_{d}^*(G*o)$.
\end{theorem}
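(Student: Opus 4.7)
By \Cref{t:conecount} applied with dimension $d-1$, we have $c^*_{d-1}(G) = c_d(G*o)$, so it suffices to prove the companion identity $c^*_d(G*o) = c^*_{d-1}(G)$. Handling the degenerate cases first: if $G*o$ is not $d$-rigid, then $G$ is not $(d-1)$-rigid (by \Cref{t:cone} and its complex extension), so both sides are infinite; if $|V| \leq d-1$ and $G*o$ is complete (which forces $G$ to be complete), both sides equal $1$. Thus we may assume throughout that $G*o$ is $d$-rigid with $|V*o| \geq d+1$, so in particular $|V| \geq d$.

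The plan is to mirror the proof of \Cref{t:conecount} in the purely spherical setting. Order the distinguished vertices so that $Y_{G*o,d}$ is defined using the sequence $o, v_1, \dots, v_{d-1}$ and $Y_{G,d-1}$ using $v_1, \dots, v_{d-1}$. Given a point $p \in Y_{G,d-1}$ and a tuple $(\alpha_v)_{v \in V} \in \mathbb{C}^V$ satisfying $\alpha_v^2 \neq 1$ for all $v$, construct a realisation $p' \in Y_{G*o,d}$ by setting
\[ p'_o = (1,0,\ldots,0) \in \mathbb{C}^{d+1}, \qquad p'_v = \alpha_v\, p'_o + \sqrt{1-\alpha_v^2}\, (0, p_v), \]
where $(0, p_v)$ denotes the embedding of $p_v \in \mathbb{S}^{d-1}_\mathbb{C}$ into the hyperplane $e_1^\perp \subset \mathbb{C}^{d+1}$. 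Choosing branches of the square roots consistently on a Zariski open subset of parameters, the resulting map $(p,\alpha) \mapsto p'$ is a dominant morphism between two irreducible varieties of equal dimension $d|V| - \binom{d}{2}$, playing the role of the projection used in the proof of \Cref{t:conecount}.

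The core technical step is to establish the spherical analogue of \Cref{kl:proj}:
\[ \left|\tilde{s}_{G*o,d}^{-1}\bigl(\tilde{s}_{G*o,d}(p')\bigr)\right| = 2 \left|\tilde{s}_{G,d-1}^{-1}\bigl(\tilde{s}_{G,d-1}(p)\bigr)\right|. \]
For any $q' \in \tilde{s}_{G*o,d}^{-1}(\tilde{s}_{G*o,d}(p'))$, the edge $ov$ forces $q'_v \cdot q'_o = p'_v \cdot p'_o = \alpha_v$, so we may decompose $q'_v = \alpha_v q'_o + \sqrt{1-\alpha_v^2}\, q_v$ with $q_v$ lying in the unit sphere of $q'_o{}^\perp \cong \mathbb{S}^{d-1}_\mathbb{C}$; each remaining edge $vw \in E$ yields a constraint on $q_v \cdot q_w$ that is identical to the spherical $(d-1)$-realisation constraint for $G$. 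This sets up the desired bijection, with the factor of $2$ arising from the sign ambiguity $[q'_{v_1}]_2 = \pm \sqrt{1-\alpha_{v_1}^2}$, which mirrors the $\pm r_{v_1}$ sign ambiguity appearing in the proof of \Cref{kl:proj}.

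To finish, we combine this bijection with \Cref{p:domsphere} and \Cref{l:ygd}: for a general $p'$ the left-hand side equals $2^d c^*_d(G*o)$, while for a general $p$ the right-hand side equals $2 \cdot 2^{d-1} c^*_{d-1}(G) = 2^d c^*_{d-1}(G)$. The dominance-and-density argument from the final part of the proof of \Cref{t:conecount} (invoking the holomorphic inverse function theorem to lift Zariski openness to ordinary openness) then transfers the identity to generic realisations, giving $c^*_d(G*o) = c^*_{d-1}(G)$, as required. The main obstacle in execution is the careful synchronisation between the square-root branches and the coordinate conventions defining $Y_{G*o,d}$, together with verifying that the image of our construction is Zariski dense in $Y_{G*o,d}$; both issues are handled by the same algebro-geometric techniques used throughout \Cref{sec:cone}.
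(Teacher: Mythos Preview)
Your approach is correct for $d \geq 2$ and takes a genuinely different route from the paper. You invoke \Cref{t:conecount} at dimension $d-1$ to reduce to the purely spherical identity $c^*_d(G*o)=c^*_{d-1}(G)$, and then prove that by a spherical analogue of \Cref{kl:proj}. The paper instead cones a second time to form $H=(G*o)*o'$, applies \Cref{t:conecount} at dimension $d$ to reduce to the Euclidean identity $c_d(G*o)=c_{d+1}(H)$, and proves the latter by placing $p'_{o'}=\mathbf{0}$, $p'_o=(1,0,\ldots,0)$, $p'_v=(\tfrac12,p_v)$: the two cone-edge constraints $o'v$ and $ov$ together force any equivalent realisation to keep every vertex of $V$ in the hyperplane $\{x_1=\tfrac12\}$, yielding the bijection directly without a new projective-invariance lemma and without any square-root bookkeeping. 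Your route is more symmetric with the proof of \Cref{t:conecount} and stays entirely in the spherical world; the paper's route recycles \Cref{kl:proj} verbatim and sidesteps the branch-cut issue you flag at the end (which, as you essentially note, disappears if one parametrises by $(\alpha_v,\beta_v)\in\mathbb{S}^1_{\mathbb{C}}$ rather than by $\alpha_v\in\mathbb{C}$, so that the construction becomes an honest morphism).

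One small gap: your opening line applies \Cref{t:conecount} in dimension $d-1$, which requires $d\geq 2$, and your degenerate-case analysis does not cover $d=1$ either. That case is immediate from \Cref{p:1dsame}.
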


\begin{proof}
	If $G$ has less than $d$ vertices then the result is trivial,
	hence we may suppose that $G$ has at least $d$ vertices.
	Let $H = (V',E')$ be the graph formed from $G*o$ by coning again,
	with $V' = V*o \cup \{o'\} = V \cup \{o,o'\}$.
	By \Cref{t:conecount},
	it suffices to prove that $c_d(G*o) = c_{d+1}(H)$.
	For our varieties $X_{G*o,d}$ and $X_{H,d+1}$,
	we fix $v_1 = o$ and choose $d-1$ other vertices $v_2,\ldots,v_d \in V$ for $G*o$,
	and we fix $w_1 = o'$, $w_2 = o$ and $w_{j+1} = v_j$ for each $j \in \{2,\ldots,d\}$ for $H$.
	
	For any realisation $p \in X_{G*o,d}$ of $G*o$,
	define the realisation $p' \in X_{H,d+1}$ where $p'_o = (1,0,\ldots,0)$ and $p'_v = (\frac{1}{2},p_v)$ for each $v \in V$.
	Choose any $\bar{p} \in X_{H,d+1}$ that is equivalent to $p'$.
	Since $o$ and $o'$ are adjacent in $H$,
	we have that $\bar{p}_o = \pm p'_o = (\pm 1,0,\ldots,0)$.
	Suppose that $\bar{p}_o = p'_o= (1,0,\ldots,0)$.
	For each $v \in V$ we have
	\begin{align*} 
		\left\| p'_v-p'_{o} \right\|^2 -  \left\|p'_v-p'_{o'} \right\|^2 =  \left([p'_v]_1 - 1\right)^2 -  [p'_v]_1^2 = 0,
	\end{align*}
	and so
	\begin{align*} 
		0 = \left\|\bar{p}_v-\bar{p}_{o} \right\|^2 - \left\|\bar{p}_v-\bar{p}_{o'} \right\|^2 = \left([\bar{p}_v]_1 - 1 \right)^2 - [\bar{p}_v]_1^2  = -2[\bar{p}_v]_1 +1.
	\end{align*}
	Hence, $[\bar{p}_v]_1 = 1/2$ for all $v \in V$ also.
	It now follows that there exists a realisation $q \in \tilde{f}_{G*o,d}^{-1}(\tilde{f}_{G*o,d}(p))$ such $q' = \bar{p}$.
	Hence, there exists a bijection from $\tilde{f}_{G*o,d}^{-1}(\tilde{f}_{G*o,d}(p))$ to the subset of realisations in $\tilde{f}_{H,d+1}^{-1}(\tilde{f}_{H,d+1}(p'))$ with the vertex $o$ placed at $(1,0,\ldots,0)$.
	As the remaining realisations can be obtained by a reflection in the hyperplane normal to $(1,0,\ldots,0)$,
	we have $|\tilde{f}_{H,d+1}^{-1}(\tilde{f}_{H,d+1}(p'))| = 2|\tilde{f}_{G*o,d}^{-1}(\tilde{f}_{G*o,d}(p))|$ for every $p \in X_{G,d}$.
	
	Fix the set
	\begin{align*}
		Z := \Big\{ p' \in X_{H,d+1} :	p'_o = (1,0,\ldots,0), ~ p'_v = (1/2,p_v) \text{ for each $v \in V$}  \Big\}
	\end{align*}		
	and define the bijective map $\psi : X_{G*o,d} \rightarrow Z$ that maps $p$ to its unique realisation $p'$ as defined above.
	Next,
	fix the dominant map $\phi : Z \times \mathbb{C}^{V*o} \rightarrow X_{H,d+1}$ where,
	given $p'' = \phi(p',r)$, we have $p'_v = r_v p_v$ for all $v \in V *o$.
	Now choose a non-empty Zariski set $U \subset X_{G*o,d}$ where 
	\begin{align*}
		|\tilde{f}_{G*o,d}^{-1}(\tilde{f}_{G*o,d}(p))| = 2^{d}c_d(G*o)
	\end{align*}
	for each $p \in U$ (\Cref{l:xgd}).
	Note that the set $\psi(U)$ is a non-empty Zariski open subset of $Z$.
	By our previous work we have that for each $p' \in \psi(U)$ we have
	\begin{align*}
		|\tilde{f}_{H,d+1}^{-1}(\tilde{f}_{H,d+1}(p'))| = 2|\tilde{f}_{G*o,d}^{-1}(\tilde{f}_{G*o,d}(p))| = 2^{d+1} c_d(G*o).
	\end{align*}
	Since the set $\psi(U) \times (\mathbb{C} \setminus \{0\})^{V*o}$ is a non-empty Zariski open set,
	its image under the dominant map $\phi$ is a Zariski dense subset of $X_{H,d+1}$.
	It now follows from \Cref{kl:proj} that there exists a Zariski dense subset $U' \subset X_{H,d+1}$ where for each $q \in U'$ we have
	\begin{align*}
		 |\tilde{f}_{H,d+1}^{-1}(\tilde{f}_{H,d+1}(q))| = 2^{d+1} c_d(G*o).
	\end{align*}
	Hence, by \Cref{l:xgd} we have $c_{d+1}(H) = c_d(G*o)$ as required.
\end{proof}

By combining \Cref{t:conecount,t:conecount2},
we obtain the following immediate corollary.

\begin{corollary}\label{cor:conecount2}
	Let $H$ be a graph formed from a graph $G$ by performing $k>0$ coning operations.
Then $c_{d+k}(H) = c_{d+k}^*(H) = c_d^*(G)$.
\end{corollary}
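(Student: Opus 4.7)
The plan is to view the corollary as a clean chaining of Theorem 4.2 and Theorem 4.3, either by induction on $k$ or by explicitly telescoping along the intermediate coned graphs. The induction-free version seems more illuminating, so I would organise it as follows.

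First I would set up the tower of coned graphs. Write the $k$ coning operations as introducing new vertices $o_1,\dots,o_k$ in order, and define $H_0 := G$ and $H_j := H_{j-1} * o_j$ for $j = 1,\dots,k$, so that $H_k = H$. For every $j \geq 1$, the graph $H_j$ is by construction a cone, hence Theorem 4.3 applies and yields
\begin{equation*}
	c_{d+j}(H_j) \;=\; c_{d+j}^*(H_j).
\end{equation*}
Independently, Theorem 4.2 applied to $H_{j-1}$ in dimension $d+j-1$ gives
\begin{equation*}
	c_{d+j-1}^*(H_{j-1}) \;=\; c_{d+j}(H_{j-1} * o_j) \;=\; c_{d+j}(H_j),
\end{equation*}
for every $j = 1,\dots,k$ (including the case $j=1$, where the identity reads $c_d^*(G) = c_{d+1}(H_1)$).

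Next I would splice these two families of equalities alternately. Starting from $c_d^*(G) = c_{d+1}(H_1)$ via Theorem 4.2, an application of Theorem 4.3 to the coned graph $H_1$ gives $c_{d+1}(H_1) = c_{d+1}^*(H_1)$, after which Theorem 4.2 applied to $H_1$ gives $c_{d+1}^*(H_1) = c_{d+2}(H_2)$, and so on. Iterating this two-step pattern $k$ times produces the chain
\begin{equation*}
	c_d^*(G) \;=\; c_{d+1}(H_1) \;=\; c_{d+1}^*(H_1) \;=\; c_{d+2}(H_2) \;=\; c_{d+2}^*(H_2) \;=\; \cdots \;=\; c_{d+k}(H_k) \;=\; c_{d+k}^*(H_k),
\end{equation*}
and since $H_k = H$ the two advertised equalities $c_{d+k}(H) = c_{d+k}^*(H) = c_d^*(G)$ fall out.

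I do not anticipate any real obstacle: the content is entirely carried by the two previous theorems, and the only care needed is to notice that every $H_j$ with $j \geq 1$ is genuinely a cone (of $H_{j-1}$ by $o_j$), so Theorem 4.3 is legally applicable at each step. If one prefers an inductive presentation, the same argument can be phrased as induction on $k$ with base case $k=1$ handled by a single application of each of Theorems 4.2 and 4.3 and inductive step $H = H' * o_k$ invoking the hypothesis on $H'$; the telescoping version above simply makes the two-step rhythm transparent.
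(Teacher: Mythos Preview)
Your argument is correct and is exactly the approach the paper takes: the corollary is stated there as an immediate consequence of combining the two theorems you invoke (that $c_d^*(G)=c_{d+1}(G*o)$ and that $c_d(G*o)=c_d^*(G*o)$ for any coned graph), and your telescoping chain along the tower $H_0,\dots,H_k$ is precisely the intended unpacking of ``immediate''. The only cosmetic issue is that your labels ``Theorem 4.2'' and ``Theorem 4.3'' do not match the paper's numbering (these are Theorems~1.2 and~5.2 there), but the content of the cited results is unambiguous and correctly used.
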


\section{Proof of \texorpdfstring{\Cref{t:sphereplane}}{Theorem 1.1}}\label{sec:flat}

Using \Cref{t:conecount},
we can prove \Cref{t:sphereplane} by instead proving that $c_d(G) \leq c_{d+1}(G*o)$.
Because of this, we need to be able to evaluate $d$-dimensional realisations of $G$ and $(d+1)$-dimensional realisations of $G*o$ at the same time.
One way of considering this is by fixing the cone vertex at the point at infinity so that the distance constraints between the cone vertex and all other vertices becomes a set of linear constraints forcing the vertices into a $d$-dimensional hyperplane.
With this general idea in mind,
we begin to construct such a space and the resulting variant of the rigidity map that comes from it.

We begin with the following prototype function that we will improve to give our required map.
For any graph $G$ with vertex $u$,
fix $h_u$ to be the morphism
\begin{align*}
	h_u : (\mathbb{C}^{(d+1)})^V \times \mathbb{C} \rightarrow \mathbb{C}^{V \setminus \{u\}}, ~ (p,r) \mapsto \left( \frac{r}{2}\left\|p_v \right\|^2 -  [p_v]_{d+1} \right)_{v \in V \setminus \{u\}}.
\end{align*}
The map $h_u$ can be used to check for equivalent realisations for the coned graph when we invert the last coordinate of the cone vertex.

\begin{lemma}\label{l:equivmap}
	Let $p,q$ be two realisations of $G*o$ in $\mathbb{C}^{d+1}$ with $p_{u}= q_{u}=\mathbf{0}$ for some $u \in V$,
	$[p_o]_j=[q_o]_j=0$ for each $j \in \{1,\ldots,d\}$ and $[p_o]_{d+1} = [q_o]_{d+1} \neq 0$.
	Given $r := 1/[p_o]_{d+1}$,
	then $f_{G*o,d+1}(p)= f_{G*o,d+1}(q)$ if and only if $f_{G,d+1}(p|_V) = f_{G,d+1}(q|_V)$ and $h_u(p|_V,r)=h_u(q|_V,r)$.
\end{lemma}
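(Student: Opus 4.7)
The plan is to unpack both sides of the claimed equivalence as explicit systems of scalar equations and then match them up. Since $f_{G*o,d+1}(p) = f_{G*o,d+1}(q)$ is the conjunction of $\|p_v - p_w\|^2 = \|q_v - q_w\|^2$ over all edges of $G*o$, I would split the edge set $E*o = E \cup \{ov : v \in V\}$ into its ``old'' part and its ``cone'' part. The old part gives precisely the condition $f_{G,d+1}(p|_V) = f_{G,d+1}(q|_V)$, so all the work goes into analysing the cone edges.

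For the cone edges, the hypothesis tells us that $p_o = q_o = (0,\ldots,0,1/r)$, so the expansion
\[
\|p_v - p_o\|^2 = \|p_v\|^2 - 2[p_v]_{d+1}/r + 1/r^2
\]
holds, and likewise for $q$. Thus, for each $v \in V$, the equation $\|p_v - p_o\|^2 = \|q_v - q_o\|^2$ is equivalent (after multiplying through by $r/2$ and cancelling the common $r/(2r^2)$ term) to
\[
\tfrac{r}{2}\|p_v\|^2 - [p_v]_{d+1} = \tfrac{r}{2}\|q_v\|^2 - [q_v]_{d+1}.
\]
For $v \in V \setminus \{u\}$ this is exactly the $v$-coordinate of $h_u(p|_V,r) = h_u(q|_V,r)$.

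The only subtlety is the cone edge $uo$, which is not covered by $h_u$ (by definition $h_u$ is indexed by $V \setminus \{u\}$). Here I would observe that the assumption $p_u = q_u = \mathbf{0}$ forces
\[
\|p_u - p_o\|^2 = \|p_o\|^2 = 1/r^2 = \|q_o\|^2 = \|q_u - q_o\|^2
\]
automatically, so this edge imposes no condition at all on top of the hypotheses. Combining the three observations (old edges $\Leftrightarrow$ $f_{G,d+1}$-equality, cone edges at $v \neq u$ $\Leftrightarrow$ $h_u$-equality, cone edge at $u$ is free) delivers the desired equivalence.

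I do not foresee a genuine obstacle here; this is a short direct verification. The only thing to be careful about is the algebraic bookkeeping in rewriting $\|p_v - p_o\|^2$, and the mild index trick that the vertex $u$ where we pinned the frame is exactly the one excluded from the domain of $h_u$, which is why the definition of $h_u$ was set up over $V \setminus \{u\}$ in the first place.
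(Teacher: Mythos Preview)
Your proposal is correct and follows essentially the same direct computation as the paper's proof: expand $\|p_v-p_o\|^2$ using $p_o=(0,\ldots,0,1/r)$ and identify the result with the $v$-coordinate of $h_u$. You are in fact slightly more explicit than the paper, which writes the identity ``for each $v\in V$'' without separately noting (as you do) that the cone edge $uo$ is automatically satisfied by the hypothesis $p_u=q_u=\mathbf{0}$ and so does not need to be captured by $h_u$.
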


\begin{proof}
	For each $v \in V$ we have
	\begin{eqnarray*}
		\frac{1}{2}\|p_v-p_o\|^2 &=& \frac{1}{2}\|p_v\|^2 - p_v \cdot p_o + \frac{1}{2}\| p_o\|^2\\
		&=& \frac{1}{2}\|p_v\|^2 - \frac{1}{r} [p_v]_{d+1} + \frac{1}{2 r^2} \\
		&=& \frac{1}{r} h_u(p|_V, r)_v  +  \frac{1}{2 r^2},
	\end{eqnarray*}
	and similarly $\|q_v-q_o\|^2 = \frac{1}{r} h_{u}(q|_V, r)_v  +  \frac{1}{2r^2}$.
	Hence, $f_{G*o,d+1}(p)= f_{G*o,d+1}(q)$ if and only if $f_{G,d+1}(p|_V) = f_{G,d+1}(q|_V)$ and $h_u(p|_V,r)=h_u(q|_V,r)$.
\end{proof}

For the remainder of the section we fix $G=(V,E)$ to be a $d$-rigid graph with $|V| \geq d+1$,
and we also fix the distinct vertices $v_1,\ldots, v_d$ of $G$.
We first need to reformat $X_{G*o,d+1}$.
Define the $((d+1)(|V*o|) - \binom{d+2}{2})$-dimensional linear space
\begin{align*}
	X_{G*o,d+1}' := \left\{  p \in (\mathbb{C}^{d+1})^{V*o} :  [p_{v_1}]_{d+1} = 0, ~ [p_o]_k = 0, ~[p_{v_k}]_j = 0 \text{ for all }  1 \leq k \leq j \leq d \right\}.
\end{align*}
Importantly,
this space forces the vertex $v_1$ to lie on the origin and the cone vertex $o$ to lie on the $(d+1)$-axis.
We can link our new space $X_{G*o,d+1}'$ with our previously used space $X_{G*o,d+1}$ (see \cref{eq:xset}) by the bijective linear map $L :X_{G*o,d+1}' \rightarrow X_{G*o,d+1}$ where, given $q=L(p)$,
we have $[q_v]_1 = [p_v]_{d+1} - [p_o]_{d+1}$ and $[q_v]_j = [p_v]_{j-1} - [p_o]_{d+1}$ for each $j \in \{2,\ldots, d+1\}$.
Hence, any result that uses the space $X_{G*o,d+1}$ can be easily replaced by a result that uses the space $X'_{G*o,d+1}$;
for example,
general realisations in $X'_{G*o,d+1}$ have identical properties to general realisations in $X_{G*o,d+1}$.

Next,
we define the $((d+1)|V| - \binom{d+2}{2} + d)$-dimensional linear space
\begin{align*}
	Z_{G,d+1} := \left\{  p \in (\mathbb{C}^{d+1})^{V} :  [p_{v_1}]_{d+1} = 0, ~[p_{v_i}]_j = 0 \text{ for all } i, j \in \{1,\ldots, d\}, ~ j \geq i \right\}.
\end{align*}
Note that the space $X_{G,d}$ embeds into $Z_{G,d+1}$ under the injective linear map 
\begin{align*}
	\lambda: X_{G,d} \rightarrow Z_{G,d+1}
\end{align*}
where, given $q= \lambda(p)$ and a vertex $v \in V$,
we have that $[q_v]_j = [p_v]_j$ for each $j \in \{1,\ldots,d\}$ and $[q_v]_{d+1} = 0$.
Let 
\begin{align*}
	\phi: Z_{G,d+1} \times (\mathbb{C}\setminus \{0\}) \rightarrow X_{G*o,d+1}'
\end{align*}
be the injective open map where,
given $\phi(p,r) = q$,
we have $q_v = p_v$ for all $v \in V$ and $q_o = (0 , \ldots, 0, 1/r)$.
The only realisations in $X_{G*o,d+1}'$ not contained in the image of $\phi$ are those of the form $p$ with $[p_o]_{d+1} = 0$.

With all of these spaces defined,
we are now ready to define our main morphism:
\begin{align*}
	g : Z_{G,d+1} \times \mathbb{C} \rightarrow \mathbb{C}^E \times \mathbb{C}^{V \setminus \{v_1\}} \times \mathbb{C} , ~ (p,r) \mapsto \left( f_{G,d+1}(p) , h_{v_1}(p,r), r \right).
\end{align*}
As proven by our next result,
the map $g$ is effectively identical in behaviour to $\tilde{f}_{G*o,d+1}$ over the realisations in the image of $\phi$.

\begin{lemma}\label{l:gmap}
	For any $p,q \in X_{G*o,d+1}'$ where $[p_o]_{d+1}, [q_o]_{d+1} \neq 0$,
	the following are equivalent:
	\begin{enumerate}
		\item $f_{G*o,d+1} (p) = f_{G*o,d+1} (q)$.
		\item $g \circ \phi^{-1}(p) = g \circ \phi^{-1}(q)$, or $g \circ \phi^{-1}(p) = g \circ \phi^{-1}(-q)$.
	\end{enumerate}
	Hence, $|\tilde{f}_{G*o,d+1}^{-1}(f_{G*o,d+1} (p))| = 2|g \circ \phi^{-1}(p)|$.
\end{lemma}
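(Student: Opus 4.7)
The plan is to establish the equivalence by combining \Cref{l:equivmap} with the observation that, under the defining linear constraints of $X_{G*o,d+1}'$, the $(d+1)$-th coordinate of the cone vertex is determined by the edge lengths only up to a sign.

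First, I would unwind the definition of $X_{G*o,d+1}'$ to see that any $q \in X_{G*o,d+1}'$ satisfies $q_{v_1}=\mathbf{0}$ and $q_o = (0,\ldots,0,[q_o]_{d+1})$. Since $ov_1 \in E*o$, the corresponding edge-length constraint reads $\|q_o\|^2 = [q_o]_{d+1}^2$, and similarly for $p$. Therefore, whenever $f_{G*o,d+1}(p)=f_{G*o,d+1}(q)$ with $[p_o]_{d+1},[q_o]_{d+1}\neq 0$, one has $[q_o]_{d+1} = \varepsilon[p_o]_{d+1}$ for some $\varepsilon \in \{\pm 1\}$. I would also record the auxiliary facts that $-q \in X_{G*o,d+1}'$ (the defining conditions are linear and homogeneous) and that $f_{G*o,d+1}(-q)=f_{G*o,d+1}(q)$ (the rigidity map depends only on squared differences).

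For (1)$\Rightarrow$(2), I split on $\varepsilon$. When $\varepsilon=+1$, the pair $(p,q)$ satisfies the hypotheses of \Cref{l:equivmap} with common value $r=1/[p_o]_{d+1}$, and the conclusion translates edge-length equality into $f_{G,d+1}(p|_V)=f_{G,d+1}(q|_V)$ and $h_{v_1}(p|_V,r)=h_{v_1}(q|_V,r)$; together with the tautological equality of $r$-coordinates, this is precisely $g\circ\phi^{-1}(p)=g\circ\phi^{-1}(q)$. When $\varepsilon=-1$, I apply the previous case to the pair $(p,-q)$ using $[(-q)_o]_{d+1}=[p_o]_{d+1}$, obtaining $g\circ\phi^{-1}(p)=g\circ\phi^{-1}(-q)$. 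The implication (2)$\Rightarrow$(1) is immediate by reversing these steps: in either case \Cref{l:equivmap} yields $f_{G*o,d+1}(p)=f_{G*o,d+1}(q)$ or $f_{G*o,d+1}(p)=f_{G*o,d+1}(-q)=f_{G*o,d+1}(q)$.

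Finally, for the cardinality assertion I would partition $\tilde{f}_{G*o,d+1}^{-1}(f_{G*o,d+1}(p))$ according to $\varepsilon$. The two subsets are disjoint because $[p_o]_{d+1}\neq 0$, and negation $q\mapsto -q$ furnishes a bijection between them; by the $\varepsilon=+1$ half of (1)$\Leftrightarrow$(2) together with injectivity of $\phi$, the $\varepsilon=+1$ subset is in bijection with $g^{-1}(g\circ\phi^{-1}(p))$ via $q\mapsto\phi^{-1}(q)$. This doubles to give the claimed count. The only conceptual subtlety is the sign ambiguity on $[q_o]_{d+1}$, which is precisely the mechanism producing the factor of $2$; everything else is bookkeeping on top of \Cref{l:equivmap}.
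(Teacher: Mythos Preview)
Your proposal is correct and follows essentially the same approach as the paper's proof: the paper's two-sentence argument observes that the constraints of $X_{G*o,d+1}'$ force $[p_o]_{d+1} = \pm [q_o]_{d+1}$ and then invokes \Cref{l:equivmap}, which is exactly the mechanism you spell out in detail with the case split on $\varepsilon$. Your more careful treatment of the cardinality assertion (partitioning by sign, using injectivity of $\phi$, and identifying the $\varepsilon=+1$ subset with the $g$-fibre) fills in bookkeeping that the paper leaves implicit, but there is no difference in strategy.
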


\begin{proof}
	It follows from our construction of $X_{G*o,d+1}'$ that $[p_o]_{d+1} = \pm [q_o]_{d+1} \neq 0$.
	The result now follows from \Cref{l:equivmap}.
\end{proof}

With our new set-up,
any point $(p,0)$ will, in some sense, behave like a realisation of the coned graph $G*o$ with the coned vertex ``placed at infinity''.
It also forces a subset of such points (i.e., those where $p$ is flat in the hyperplane $\mathbb{C}^d \times \{0\}$) to act like they are one dimension lower (minus the coned vertex).
In fact,
the map $g$ shares many properties with the map $f_{G,d}$ when we restrict to a certain subset of elements of $Z_{G,d+1} \times \mathbb{C}$.

\begin{lemma}\label{l:gmapflat}
	Let $\tilde{p} \in X_{G,d}$ be a general realisation of $G$.
	Fix $p = \lambda(\tilde{p}) \in Z_{G,d+1}$.
	Then the following properties hold.
	\begin{enumerate}
		\item \label{l:gmapflat1} For each $(q,r) \in g^{-1}(g(p,0))$ we have $r = 0$, $f_{G,d+1}(q)=f_{G,d+1}(p)$ and $[q_v]_{d+1}=0$ for each $v \in V$.
		\item \label{l:gmapflat2} For each $\tilde{q} \in X_{G,d}$ where $f_{G,d}(\tilde{q}) = f_{G,d}(\tilde{p})$,
		we have $g(\lambda(\tilde{q}),0) = g(p,0)$.
		\item \label{l:gmapflat3} $|\tilde{f}_{G,d}^{-1}(f_{G,d}(\tilde{p}))| = |g^{-1}(g(p,0))|$.
		\item \label{l:gmapflat4} For every $(q,r) \in Z_{G,d+1} \times \mathbb{C}$ we have $\rank \diff g(p,0) \geq \rank \diff g(q,r)$.
		\item \label{l:gmapflat5} For every $(q,r) \in g^{-1}(g(p,0))$,
		the left kernels of $\diff g(q,r)$ and $\diff g(p,0)$ are identical.
	\end{enumerate}
\end{lemma}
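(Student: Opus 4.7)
The plan is to exploit two structural features: the product form of $g$, and the fact that at $(p,0) = (\lambda(\tilde{p}),0)$ every last coordinate $[p_v]_{d+1}$ vanishes, which makes the $h_{v_1}$-component of $g$ degenerate in a convenient way.

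For (i)--(iii) I would simply unwind the definition of $g$. If $g(q,r) = g(p,0)$, then the third coordinate forces $r=0$, the first gives $f_{G,d+1}(q)=f_{G,d+1}(p)$, and with $r=0$ the equality $h_{v_1}(q,0) = h_{v_1}(p,0) = 0$ reads $[q_v]_{d+1} = 0$ for $v \neq v_1$; combined with $[q_{v_1}]_{d+1} = 0$ from the definition of $Z_{G,d+1}$ this yields (i). Moreover $q = \lambda(\tilde{q})$ for some $\tilde{q} \in X_{G,d}$ with $f_{G,d}(\tilde{q}) = f_{G,d}(\tilde{p})$, so one obtains a bijection $\tilde{q} \leftrightarrow (\lambda(\tilde{q}),0)$, which is exactly (ii) and (iii).

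For (iv), writing a tangent vector of $Z_{G,d+1}$ as $(y,z)$ with $y$ recording the first $d$ coordinates and $z$ the $(d+1)$-th, the constraints defining $Z_{G,d+1}$ force $y \in X_{G,d}$ and $z \in \mathbb{C}^{V \setminus \{v_1\}}$. I would compute
\begin{align*}
\diff g(p,0)(x,s) = \Bigl(\diff f_{G,d}(\tilde{p})(y),\; \bigl(\tfrac{s}{2}\|\tilde{p}_v\|^2 - z_v\bigr)_{v \in V \setminus \{v_1\}},\; s\Bigr),
\end{align*}
whose three output blocks decouple: $s$ is read off from the third block; $y$ enters only the first via the map $\diff\tilde{f}_{G,d}(\tilde{p})$, which is injective because $\tilde{p}$ is general and $G$ is $d$-rigid (\Cref{p:dom}); and $z$ enters only the second as $-z_v$. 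Hence $\diff g(p,0)$ itself is injective, so its rank equals $\dim(Z_{G,d+1} \times \mathbb{C})$, which is the maximum possible for $\diff g$ anywhere, giving (iv).

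For (v), the same decomposition applies at any $(q,0) \in g^{-1}(g(p,0))$ because (i) gives $q = \lambda(\tilde{q})$ and $[q_v]_{d+1} = 0$. A direct calculation, matching coefficients of free coordinates of $x$ and of $s$, shows that a triple $(\omega,\mu,\alpha)$ lies in the left kernel of $\diff g(q,0)$ if and only if $\mu = 0$, $\alpha = 0$, and $\omega$ lies in the left kernel of $\diff\tilde{f}_{G,d}(\tilde{q})$; the vanishing of $\mu$ here crucially uses that $[q_v]_{d+1} = 0$ for all $v \in V$. Thus (v) reduces to the claim that the left kernels of $\diff\tilde{f}_{G,d}(\tilde{p})$ and $\diff\tilde{f}_{G,d}(\tilde{q})$ coincide, which is the main obstacle. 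I would resolve it as follows: since $\tilde{p}$ is general, $\tilde{f}_{G,d}(\tilde{p})$ is a smooth point of $\ell_d(G)$, and by \Cref{p:dom}\,(iii) both $\diff\tilde{f}_{G,d}(\tilde{p})$ and $\diff\tilde{f}_{G,d}(\tilde{q})$ are injective with image dimension equal to $\dim \ell_d(G)$. Hence both images equal $T_{\tilde{f}_{G,d}(\tilde{p})}\ell_d(G)$, so their annihilators---the left kernels---agree.
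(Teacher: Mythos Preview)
Your proof is correct and follows essentially the same approach as the paper's: both arguments handle (i)--(iii) by direct unwinding, both establish (iv) by showing that the derivative at $(p,0)$ has rank equal to $\dim(Z_{G,d+1}\times\mathbb{C})$ via the block-triangular structure induced by $[p_v]_{d+1}=0$, and both reduce (v) to the equality of left kernels of $\diff f_{G,d}$ at $\tilde{p}$ and $\tilde{q}$, proved by identifying each image with the tangent space of $\ell_d(G)$ at the common smooth image point. Your presentation is arguably cleaner in working directly with the linear map on the tangent space rather than with the full Jacobian matrix; one small remark is that the role of $[q_v]_{d+1}=0$ in (v) is more precisely that it makes the \emph{first} block $\diff f_{G,d+1}(q)$ independent of the $z$-coordinates (and hence equal to $\diff f_{G,d}(\tilde q)$), which is what allows the decoupling that forces $\mu=0$.
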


\begin{proof}
	\ref{l:gmapflat1}:
	As $g(q,r) = g(p,0)$,
	we have that $r=0$ and $f_{G,d+1}(q)=f_{G,d+1}(p)$.
	Hence,
	\begin{align*}
		h_{v_1}(q,0) = h_{v_1}(q,r) = h_{v_1}(p,0) = 0,
	\end{align*}
	with the latter equality holding since $[p_v]_{d+1}=0$ for all $v \in V$.
	Since $h_{v_1}(q,0) = 0$,
	it now follows that $[q_v]_{d+1}=0$ for each $v \in V \setminus \{v_1\}$.
	The equality $[q_{v_1}]_{d+1} = 0$ also holds since $q \in Z_{G,d+1}$.
	
	\ref{l:gmapflat2}:
	This follows immediately from the observation that $f_{G,d}(\tilde{q}) = f_{G,d+1}(\lambda(q))$ and $f_{G,d}(\tilde{p}) = f_{G,d+1}(p)$.
	
	\ref{l:gmapflat3}:
	This follows from points \ref{l:gmapflat1} and \ref{l:gmapflat2} and the observation that $\lambda$ is a bijection between realisations in $X_{G,d}$ and realisations in $Z_{G,d}$ where the $(d+1)$-th coordinate of each vertex is zero.
	
	\ref{l:gmapflat4}:
	For any $(q,r) \in Z_{G,d+1} \times \mathbb{C}$,
	the Jacobian of $g$ at $(q,r)$ will be a $(|E| + |V|) \times ((d+1)|V| + 1)$ matrix of the form
	\begin{align*}
		\diff g(q,r) =		
		\begin{bmatrix}
			\diff f_{G,d+1}(q) & 0_{|E| \times 1} \\
			A & \left(\frac{1}{2}\|q_v\|^2 \right)_{v \in V \setminus \{v_1\}} \\
			0_{1\times (d+1)|V|} & 1
		\end{bmatrix},
	\end{align*}
	where $0_{1 \times (d+1)|V|}$ is the $1 \times (d+1)|V|$ all zeroes matrix and $A$ is the $(|V|-1) \times (d+1)|V|$ matrix where for the $(v,(w,i))$ entry (with $v \in V \setminus \{v_1\}$ and $(w,i) \in V \times \{1,\ldots, d+1\}$) we have
	\begin{align*}
		A_{v, (w,i)} =
		\begin{cases}
			r [q_v]_i &\text{if } w = v, ~ i \leq d\\
			r[q_v]_{d+1} - 1 &\text{if } w = v, ~ i =d+1\\
			0 &\text{otherwise.}
		\end{cases}
	\end{align*}
	Note that the rank of $\diff g(q,r)$ must satisfy the following upper bound:
	\begin{align}\label{eq1:gmapflat}
		\rank \diff g(q,r) \leq \dim (Z_{G,d+1} \times \mathbb{C}) = (d+1)|V*o| - \binom{d+2}{2}.
	\end{align}
	Now we observe the Jacobian of $g$ when $(q,r) = (p,0)$.
	By moving all columns of $\diff g(p,0)$ that correspond to the $(d+1)$-th coefficients of the vertices to the right,
	we obtain the matrix
	\begin{align}\label{eq1.5:gmapflat}
		\begin{bmatrix}
			\diff f_{G,d}(\tilde{p}) & 0_{|E| \times 1} &  0_{|E| \times |V|} \\
			0_{(|V|-1) \times 1} & (\frac{1}{2}\|p_v\|^2)_{v \in V \setminus \{v_1\}} & -  I_{|V|-1}\\
			0_{1\times (d+1)|V|} & 1 &   0_{1 \times |V|}
		\end{bmatrix},
	\end{align}
	where $I_{|V|-1}$ is the $(|V|-1) \times (|V|-1)$ identity matrix.
	Since $\tilde{p}$ is a general realisation of a $d$-rigid graph,
	we have that
	\begin{align}\label{eq2:gmapflat}
		\rank \diff g(p,0) = \rank \diff f_{G,d}(\tilde{p}) + |V| = d|V| - \binom{d+1}{2} + |V| = (d+1)|V*o| - \binom{d+2}{2}.
	\end{align}
	By combining \Cref{eq1:gmapflat,eq2:gmapflat},
	we have that the derivative of $g$ has maximal rank at $(p,0)$.
	
	\ref{l:gmapflat5}:
	It follows from \ref{l:gmapflat1} that $r=0$ and there exists $\tilde{q} \in X_{G,d}$ where $\lambda(\tilde{q}) = q$ and $f_{G,d}(\tilde{q})=f_{G,d}(\tilde{p})$.
	By reformatting the matrix $\diff g(q,0)$ into the same format as the matrix in \Cref{eq1.5:gmapflat},
	we see that the left kernels of $\diff g(q,0)$ and $\diff g(p,0)$ agree if and only if left kernels of $\diff f_{G,d}(\tilde{p})$ and $\diff f_{G,d}(\tilde{q})$ agree.
	Since $\tilde{p}$ is a general realisation,
	its image is a smooth point in the Zariski closure $\ell_d(G)$ of the image of $f_{G,d}$:
	indeed if this was not true then $\tilde{p}$ would be contained in the preimage of a proper algebraic subset of $\ell_d(G)$ under the morphism $f_{G,d}$, contradicting that it is a general realisation.
	It follows from the inverse mapping theorem for holomorphic maps (see for example \cite[Chapter I, Theorem 7.5]{FG02}) that the tangent space of $\ell_d(G)$ at $f_{G,d}(\tilde{p})$ (respectively, $f_{G,d}(\tilde{q})$) is exactly the left kernel of $\diff f_{G,d}(\tilde{p})$ (respectively, $\diff f_{G,d}(\tilde{q})$).
	Since $f_{G,d}(\tilde{p}) = f_{G,d}(\tilde{q})$,
	the left kernels of $\diff f_{G,d}(\tilde{p})$ and $\diff f_{G,d}(\tilde{q})$ are equal.
	This now concludes the proof.
\end{proof}

\Cref{l:gmap,l:gmapflat} indicate that our map $g$ can be used to consider both the $d$-realisation number of $G$ and the $(d+1)$-realisation number of $G*o$ by observing the fibres of either specific or general elements of the domain of $g$ respectively.
Our next lemma allows us to compare the fibre sizes of these two types of elements.

\begin{lemma}\label{l:diffmaprealisations}
	Let $f : \mathbb{C}^m \rightarrow \mathbb{C}^n$ be a holomorphic map and let $x \in \mathbb{C}^n$ be a point in $\mathbb{C}^n$ where:
	\begin{enumerate}
		\item $|f^{-1}(f(x))|< \infty$,
		\item $\rank \diff f (x) \geq \rank \diff f(y)$ for all $y \in \mathbb{C}^m$, and,
		\item the left kernels of $\diff f(x)$ and $\diff f(z)$ are identical for all $z \in f^{-1}(f(x))$.
	\end{enumerate}
	Then there exists an open neighbourhood $U \subset \mathbb{C}^m$ of $x$ (with respect to the metric topology for $\mathbb{C}^n$) where $\rank \diff f (y) = \rank \diff f(x)$ and $| f^{-1}(f(x))| \leq |f^{-1}(f(y))|$ for all $y \in U$.
\end{lemma}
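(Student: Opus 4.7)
My plan is to combine upper semicontinuity of rank with the holomorphic inverse function theorem applied at each preimage of $f(x)$, and then to argue that the resulting local images coincide near $f(x)$. Set $r := \rank \diff f(x)$. The rank statement follows from upper semicontinuity: the set $\{y \in \mathbb{C}^m : \rank \diff f(y) \geq r\}$ is Zariski open (cut out by non-vanishing of some $r \times r$ minor), hence open in the metric topology, and condition~(ii) promotes the inequality to an equality, giving a metric-open neighbourhood of $x$ on which the rank equals $r$.

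Next I would argue that $r = m$. Condition~(iii) gives $\rank \diff f(z) = r$ for every $z \in f^{-1}(f(x))$, so the rank is locally constant equal to $r$ near each preimage. The holomorphic constant rank theorem then realises $f^{-1}(f(x))$ locally as a smooth complex submanifold of dimension $m - r$ at each of its points, and condition~(i) forces $m - r = 0$. Consequently each $\diff f(z_i)$ is injective, and the inverse function theorem yields pairwise disjoint open neighbourhoods $V_1, \ldots, V_k$ of the preimages $z_1, \ldots, z_k$ on which $f$ restricts to a biholomorphism onto a smooth complex $m$-dimensional submanifold $M_i \subseteq \mathbb{C}^n$; by condition~(iii) these $M_i$ share a common tangent space $L := \mathrm{Im}\, \diff f(z_i)$ at $f(x)$.

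Granting that the germs $M_1, \ldots, M_k$ at $f(x)$ coincide, the argument concludes: for $y$ in a small enough neighbourhood of $x$ (contained in the rank-$r$ open set above), $f(y)$ lies in this common germ, so each biholomorphism $f|_{V_i}$ produces a unique preimage of $f(y)$ in $V_i$. These $k$ preimages are distinct because the $V_i$ are disjoint, yielding $|f^{-1}(f(y))| \geq k = |f^{-1}(f(x))|$.

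The main obstacle is verifying that the germs do in fact coincide. Each $M_i$ lies inside the irreducible image variety $V := \overline{f(\mathbb{C}^m)}$, which has pure dimension $m$, and they share the tangent space $L$ at $f(x)$. The cleanest route is to establish that $f(x)$ is a smooth point of $V$ --- which is exactly the situation in the paper's intended application, where the analogous image point was already shown to lie in the smooth locus of the relevant image variety in the proof of \Cref{l:gmapflat} --- so that $V$ has a unique $m$-dimensional smooth analytic germ at $f(x)$ and each $M_i$ must coincide with it locally. More abstractly, one could cut each $M_i$ out by a holomorphic submersion $\phi_i : W_i \to \mathbb{C}^{n-m}$ with $M_i = \phi_i^{-1}(0)$ and invoke the identity principle on the compositions $\phi_i \circ f$; the delicate part of the argument lies in handling the analytic branches of the image variety through $f(x)$ correctly.
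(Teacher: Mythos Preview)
Your approach mirrors the paper's: apply the holomorphic inverse function theorem at each preimage $x_j$ of $f(x)$ to obtain pairwise disjoint neighbourhoods on which $f$ is a biholomorphism onto smooth $m$-dimensional pieces $M_j \ni f(x)$, and then argue that the $M_j$ coincide near $f(x)$ so that each contributes a preimage of $f(y)$ for $y$ close to $x$. The paper phrases the coincidence step as ``since the left kernels agree we may choose $V_1=\cdots=V_k$'' and moves on; your added justification that the rank must equal $m$ (constant rank theorem plus finiteness of the fibre) is a useful clarification the paper leaves implicit.

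You are right to flag the coincidence of germs as the crux, and in fact this step \emph{fails} in the generality of the lemma. Take $m=1$, $n=2$ and
\[
	f(z) \;=\; \bigl(z(z-1),\ z^2(z-1)^2(z+1)\bigr).
\]
Then $f^{-1}(0,0)=\{0,1\}$; one checks $f'(z)\neq 0$ for every $z$, so the rank is identically $1$; and $f'(0)=(-1,0)$, $f'(1)=(1,0)$ have the same left kernel. Thus all three hypotheses hold at $x=0$. But for $z$ near $0$ with $z\neq 0$, the equation $w(w-1)=z(z-1)$ forces $w\in\{z,1-z\}$, and $f_2(1-z)=z^2(1-z)^2(2-z)$ differs from $f_2(z)=z^2(1-z)^2(1+z)$ unless $z=\tfrac12$. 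Hence $|f^{-1}(f(z))|=1<2$ for small $z\neq 0$. Geometrically the image has a tacnode at the origin: two analytic branches with a common tangent but distinct germs, which is exactly the obstruction you anticipated. So the lemma is not true as stated, and the paper's own proof glosses over precisely this point.

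Your instinct about the repair is correct: once one knows that $f(x)$ is a smooth point of the Zariski closure of the image, the germ through $f(x)$ is unique and every $M_j$ must agree with it. In the paper's application this smoothness is available --- it is established inside the proof of \Cref{l:gmapflat}\ref{l:gmapflat5} for $f_{G,d}(\tilde p)$ in $\ell_d(G)$, and the block-triangular shape of $\diff g(p,0)$ displayed there transports it to the image of $g$. So your diagnosis is accurate: the argument needs an extra smoothness hypothesis (or one should fold the lemma back into the specific setting where that hypothesis is already in hand).
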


\begin{proof}
	Label the points in $f^{-1}(f(x))$ as $x_1,\ldots, x_k$,
	with $x_1 = x$.
	As a consequence of the inverse mapping theorem for holomorphic maps (see for example \cite[Chapter I, Theorem 7.5]{FG02}),
	pairwise disjoint open sets $U_1,\ldots , U_k \subset \mathbb{C}^m$ and smooth manifolds $V_1,\ldots,V_k \subset \mathbb{C}^n$ so that for each $j \in \{1,\ldots,k\}$ the following properties hold:
	(i) $x_j \in U_j$,
	(ii) the set $V_j$ is an open neighbourhood of $f(x_j)$ in the image of $f$,
	(iii) the tangent space of $V_j$ is the left kernel of $\diff f(x_j)$,
	and 
	(iv) the restricted map $f|_{U_j}^{V_j}$ is biholomorphic (i.e., invertible with holomorphic inverse).
	Since the left kernel of each Jacobian $\diff f(x_j)$ is identical,
	we may choose our sets so that $V_1 = \ldots = V_k$.
	Choose any $y \in U_1$.
	Since $f|_{U_1}^{V_1}$ is biholomorphic,
	$\rank \diff f(y) = \rank \diff f(x)$.
	For each $j \in \{1,\ldots,k\}$,
	we observe that, since $V_j=V_1$,
	there exists a unique point $y_j \in U_j$ so that $f(y_j)=f(y)$.
	As the sets $U_1,\ldots , U_k$ are pairwise disjoint,
	$| f^{-1}(f(y))| \geq k$.
\end{proof}

We are now ready to use the map $g$ to prove our second main theorem of the paper.

\begin{proof}[Proof of \Cref{t:sphereplane}]
	By \Cref{t:conecount},
	it suffices to prove that $c_d(G) \leq c_{d+1}(G*o)$.
	By \Cref{t:cone},
	it also suffices for us to prove the specific case where $G$ is $d$-rigid (and hence $G*o$ is $(d+1)$-rigid) with at least $d+1$ vertices.
	All notation we now use is in line with our prior notation for the section.
	
	Choose a general realisation $\tilde{p} \in X_{G,d}$ of $G$ and define $p = \lambda(\tilde{p}) \in Z_{G,d+1}$.
	It follows from \Cref{l:gmapflat}\ref{l:gmapflat4}, \Cref{l:gmapflat}\ref{l:gmapflat5} and \Cref{l:diffmaprealisations} that there exists an open neighbourhood
	$U \subset Z_{G,d+1} \times \mathbb{C}$ of $(p,0)$ (with respect to the metric topology) where
	for all $(p',r') \in U$ we have $| g^{-1} (g(p,0)) | \leq | g^{-1}(g(p',r'))|$.
	Since the map $\phi$ is an injective open map with dense image,
	it follows that there exists a general realisation $q \in X_{G*o,d+1}'$ where $\phi(q) \in U$.
	Hence, by \Cref{l:gmap}, \Cref{l:gmapflat}\ref{l:gmapflat3} and \Cref{l:xgd},
	we have that
	\begin{align*}
		2^{d+1} c_d(G) &= 2|\tilde{f}_{G,d}^{-1}(f_{G,d}(\tilde{p}))|
		\\
		&= 2|g^{-1}(g(p,0))|\leq 2|g^{-1}(g(\phi(q)))| \\
		&= |\tilde{f}_{G*o,d+1}^{-1}(f_{G*o,d+1}(q))| \\
		&= 2^{d+1} c_{d+1}(G*o).
	\end{align*}
	Thus $c_d(G) \leq c_{d+1}(G*o)$ as required.
\end{proof}

\begin{remark}\label{rem:alginfo}
	The map $p \mapsto |C_d(G,p)|$ has two important properties:
	(i) it is constant over the set of general realisations,
	and (ii) it is locally minimized at every regular realisation of $G$.
	Theoretically,
	any algebraic property that satisfies points (i) and (ii) will be amenable to the techniques given in this section.
\end{remark}

\section{How and when do spherical and planar realisation numbers differ?}\label{sec:compute}

In this section we compare the $d$-realisation number and spherical $d$-realisation numbers of graphs using computational means.
Since we are utilising computational methods,
we will, for the most part, restrict to results for minimally $d$-rigid graphs.
The reason for this is two-fold:
(i) all known deterministic algorithms for computing realisation numbers require that the graph is minimally 2-rigid,
and (ii) Gr\"{o}bner basis computational methods are too expensive even for comparably small graphs.

A common operation we use in this section is the \emph{($d$-dimensional) 0-extensions}.
This is the graph operation where a new vertex is added to a graph and is set to be adjacent to exactly $d$ vertices of the original graph.
It is well-known that, given a graph $G$ and a graph $G'$ formed from $G$ by a $d$-dimensional 0-extension, the graph $G'$ is (minimally) $d$-rigid if and only if $G$ is (minimally) $d$-rigid (see for example \cite[Proposition 5.1]{TayWhite85}).
As can be seen by the following result,
this operation also behaves very well with realisation numbers.
(See \Cref{sec:0ext} for a proof of this result.)

\begin{lemma}\label{l:0ext}
	If $G'$ is a 0-extension of a graph $G$,
	then $c_d(G') = 2c_d(G)$ and $c_d^*(G') = 2c^*_d(G)$.
\end{lemma}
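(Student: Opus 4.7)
The plan is to reduce both equalities to a statement about fibre sizes of the rigidity maps $\tilde{f}_{G,d}, \tilde{f}_{G',d}$ (respectively $\tilde{s}_{G,d}, \tilde{s}_{G',d}$ in the spherical case), and then invoke \Cref{l:xgd} (respectively \Cref{l:ygd}). First I would dispose of the trivial case where $G$ is not $d$-rigid: a 0-extension preserves non-$d$-rigidity, so $G'$ is also not $d$-rigid and both sides of each equality are infinite by \Cref{def:count}, \Cref{def:countsphere} and \Cref{p:dom}. Henceforth assume $G$ (and hence $G'$) is $d$-rigid with at least $d+1$ vertices.

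Let $v$ be the new vertex of $G'$ and $u_1, \ldots, u_d \in V$ its neighbours. Choose the distinguished vertices $v_1, \ldots, v_d$ used to define $X_{G,d}$ and $X_{G',d}$ to lie in $V$, so that restriction to $V$ gives a natural surjective morphism $\pi : X_{G',d} \to X_{G,d}$ with fibres isomorphic to $\mathbb{C}^d$. Pick a general $\tilde{p}' \in X_{G',d}$ and set $\tilde{p} := \pi(\tilde{p}')$, which is general in $X_{G,d}$. By \Cref{l:xgd} it suffices to prove
\[
\left| \tilde{f}_{G',d}^{-1}\bigl(\tilde{f}_{G',d}(\tilde{p}')\bigr) \right| \;=\; 2 \left| \tilde{f}_{G,d}^{-1}\bigl(\tilde{f}_{G,d}(\tilde{p})\bigr) \right|.
\]
Any $\tilde{q}'$ in the left-hand fibre is determined by $\tilde{q} := \pi(\tilde{q}') \in \tilde{f}_{G,d}^{-1}(\tilde{f}_{G,d}(\tilde{p}))$ together with a choice of $x := \tilde{q}'_v$ satisfying the $d$ quadratic equations $\|x - \tilde{q}_{u_i}\|^2 = \|\tilde{p}'_v - \tilde{p}_{u_i}\|^2$. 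Subtracting pairs produces $d-1$ linear equations in $x$; when $u_1, \ldots, u_d$ are affinely independent at $\tilde{q}$, these cut out a complex affine line, and the remaining quadratic equation restricted to this line generically contributes exactly $2$ solutions.

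The crux is therefore a genericity argument: for a general $\tilde{p}$, every $\tilde{q}$ in the fibre $\tilde{f}_{G,d}^{-1}(\tilde{f}_{G,d}(\tilde{p}))$ has $u_1, \ldots, u_d$ affinely independent, and the associated quadratic has $2$ distinct roots. Let $A \subset X_{G,d}$ be the proper subvariety on which $u_1, \ldots, u_d$ are affinely dependent. Since $G$ is $d$-rigid, $\tilde{f}_{G,d}$ is generically finite by \Cref{p:dom}, so $\dim \tilde{f}_{G,d}(A) \leq \dim A < \dim \ell_d(G)$, and hence $\tilde{f}_{G,d}^{-1}\bigl(\overline{\tilde{f}_{G,d}(A)}\bigr)$ is a proper Zariski closed subset of $X_{G,d}$ whose complement consists of realisations whose entire $\tilde{f}_{G,d}$-fibre avoids $A$. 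The non-degeneracy of the quadratic at each fibre point is a further Zariski open condition that holds generically by the same logic. Combining these and applying \Cref{l:xgd} gives $c_d(G') = 2 c_d(G)$.

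For the spherical case the setup is identical using $Y_{G,d}, Y_{G',d}$ and \Cref{l:ygd}. The constraints $\|p'_v - p_{u_i}\|^2 = \lambda_i$ together with $\|p'_v\|^2 = 1$ collapse to the $d$ linear equations $p_{u_i} \cdot p'_v = 1 - \lambda_i/2$ in $p'_v \in \mathbb{C}^{d+1}$; when $p_{u_1}, \ldots, p_{u_d}$ are linearly independent, these cut out a complex affine line which generically meets $\mathbb{S}^d_{\mathbb{C}}$ in exactly $2$ points. The same genericity argument applies, with $A \subset Y_{G,d}$ now being the locus of linear dependence among $p_{u_1}, \ldots, p_{u_d}$, a proper subvariety cut out by the vanishing of a $d \times d$ minor. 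The main obstacle in both settings is exactly this propagation of genericity across the whole fibre: without $d$-rigidity, and hence generic finiteness of $\tilde{f}_{G,d}$ or $\tilde{s}_{G,d}$, one could not guarantee that every realisation equivalent to $\tilde{p}$ lies in the ``good locus'' where the extension count is exactly $2$.
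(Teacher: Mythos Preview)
Your proposal is correct and follows the same overall strategy as the paper: reduce to showing each equivalent realisation $\tilde{q}$ of $G$ extends in exactly two ways to the new vertex, then invoke \Cref{l:xgd} (resp.\ \Cref{l:ygd}). The paper, however, makes one simplifying choice you do not: it takes the distinguished vertices $v_1,\ldots,v_d$ defining $X_{G,d}$ to \emph{be} the neighbours $u_1,\ldots,u_d$ of the new vertex. With that choice every $q\in X_{G,d}$ has $q_{u_1},\ldots,q_{u_d}$ lying in the hyperplane $\{x_d=0\}$, so the normal direction is always $e_d$ (automatically non-isotropic) and the two extensions are simply the two sign choices for the $d$-th coordinate; this is packaged as \Cref{l:0extvec}. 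Your route --- arbitrary $v_i$, then subtract equations to a line plus one quadratic --- is slightly more general but forces you to track the extra condition that the line direction is non-isotropic (which you do not mention explicitly, though it folds into your bad locus $A$). Conversely, your pushforward--pullback argument for propagating genericity across the fibre is more explicit than the paper's, which simply asserts that each $f_q$ is dominant without justifying that $q_{u_1},\ldots,q_{u_d}$ are affinely independent for every $q$ in the fibre.

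One imprecision to tighten: the phrase ``a further Zariski open condition that holds generically by the same logic'' for the discriminant is not quite right, since the discriminant depends on $\tilde{p}'_v$ as well as on $\tilde{q}$, so your $A$-argument on $X_{G,d}$ alone does not apply verbatim. The fix is easy (and is what the paper does with its sets $U$ and $U'$): first choose $\tilde{p}$ general so that every fibre point $\tilde{q}$ satisfies the affine-independence and non-isotropy conditions, and then choose $\tilde{p}'_v$ outside the finite union of discriminant loci determined by those finitely many $\tilde{q}$.
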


In what follows we use the combinatorial algorithms from \cite{PlaneCount,SphereCount} respectively their implementations \cite{ZenodoAlg,SphereAlg,RigiComp}.
Note again that the results from these computations are the double of the realisation numbers of what we consider in this paper.
We include computations for all minimally rigid graphs with at most 12 vertices. The respective numbers of realisations for the plane can be found in \cite{ZenodoData}.

\subsection{Ratio of spherical to planar realisation numbers}

In this subsection we evaluate the ratio of spherical $d$-realisation number to $d$-realisation number for minimally $d$-rigid graphs.
We begin by first proving that the minimum possible ratio is, due to our previous results, not an especially interesting value to look into.

\begin{proposition}\label{p:asymmin}
	For any pair of positive integers $n > d$,
	\begin{align*}
		\min \{ c^*_d(G) /c_d(G) : G \text{ is minimally $d$-rigid with $n$ vertices} \} = 1.
	\end{align*}
\end{proposition}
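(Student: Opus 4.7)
The plan is to exhibit, for every $n > d$, an explicit minimally $d$-rigid graph $G$ on $n$ vertices with $c_d^*(G) = c_d(G)$. Since Theorem~\ref{t:sphereplane} already guarantees $c_d^*(G)/c_d(G) \geq 1$ for all $G$, producing a single example for each $n$ forces the minimum to equal $1$.

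The natural base case is the complete graph $K_{d+1}$, which is minimally $d$-rigid on $d+1$ vertices and satisfies $c_d(K_{d+1}) = c_d^*(K_{d+1}) = 1$ by the definitions in \Cref{def:count,def:countsphere}. This handles the case $n = d+1$.

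For $n > d+1$, the strategy is to bootstrap from $K_{d+1}$ by repeatedly applying $d$-dimensional $0$-extensions. Recall that a $0$-extension preserves minimal $d$-rigidity, and by \Cref{l:0ext} it doubles both $c_d$ and $c_d^*$ simultaneously. Consequently, the ratio $c_d^*/c_d$ is invariant under $0$-extensions. Starting from $K_{d+1}$ and performing exactly $n - d - 1$ successive $0$-extensions produces a minimally $d$-rigid graph $G$ on $n$ vertices satisfying
\begin{equation*}
    c_d(G) = 2^{n-d-1} = c_d^*(G),
\end{equation*}
so the ratio for $G$ is precisely $1$.

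There is no real obstacle here; the proof is essentially a one-line combination of the base case $K_{d+1}$ with the multiplicative behaviour of $0$-extensions given by \Cref{l:0ext}. The only point worth stating carefully is that $0$-extensions indeed preserve minimal $d$-rigidity (cited earlier in the section) so that the constructed graph genuinely lies in the set over which the minimum is taken.
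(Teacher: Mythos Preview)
Your proof is correct and follows essentially the same approach as the paper: invoke \Cref{t:sphereplane} for the lower bound of $1$, start from $K_{d+1}$ with $c_d = c_d^* = 1$, and then apply successive $d$-dimensional $0$-extensions using \Cref{l:0ext} to obtain a minimally $d$-rigid graph on $n$ vertices with $c_d(G) = c_d^*(G) = 2^{n-d-1}$.
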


\begin{proof}
	By \Cref{t:sphereplane},
	the minimum for any given value of $n$ is at least 1.
	Define $G_{d+1}$ to be the complete graph with $d+1$ vertices.
	Then $c_d(G_{d+1}) = c^*_d(G_{d+1}) =1$.
	Now construct a sequence $G_{d+1}, G_{d+2},\ldots$ of minimally $d$-rigid graphs,
	whereby $G_{n+1}$ is formed from $G_n$ by a 0-extension.
	It now follows from \Cref{l:0ext} that for each $n >d$ we have $c_d(G_{n}) = c^*_d(G_{n}) = 2^{n - d - 1}$.
\end{proof}

We now turn from the minimum of the ratio to its maximum.
For any positive integers $n > d$,
define
\begin{align*}
	\theta_d(n) := \max \left\{ c^*_d(G) /c_d(G) : G \text{ is minimally $d$-rigid with $n$ vertices} \right\}.
\end{align*}
Here things seem to be possibly more interesting.
For example,
it follows from \Cref{fig:threeprismplane,fig:threeprismsphere} that $\theta_2(6) \geq 16/12 = 4/3$.
In fact we have $\theta_2(6) = 4/3$,
since every other minimally 2-rigid graph $G$ with 6 vertices has $c_2^*(G)=c_2(G) = 8$.

Our first easy observation about $\theta_d(n)$ is that it is increasing:
this is an immediate corollary of \Cref{l:0ext}.
Our next (slightly less easy) observation about $\theta_d(n)$ is that it is bounded above by an exponential function.

\begin{proposition}\label{p:asym}
	For each positive integer $d$,
	there exists a constant $\alpha \geq 1$ such that $\theta_d(n) = O(\alpha^n)$.
\end{proposition}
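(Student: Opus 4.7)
The plan is to combine two ingredients already available in the paper: the trivial lower bound $c_d(G) \geq 1$ for any $d$-rigid graph $G$ (since the class of a generic realisation $p$ contains at least $p$ itself, so $|C_d(G,p)| \geq 1$), and \Cref{t:conecount}, which lets me convert the spherical realisation number into a Euclidean one in one dimension higher. Together these reduce the statement to showing that $c_{d+1}(G*o) = O(\alpha^n)$ for some $\alpha$ depending only on $d$, uniformly over all minimally $d$-rigid graphs $G$ on $n$ vertices.

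To obtain such a bound, I would apply a Bézout-type argument to the morphism $\tilde{f}_{G*o, d+1} : X_{G*o,d+1} \to \mathbb{C}^{|E(G*o)|}$. When $G$ is minimally $d$-rigid, $G*o$ is minimally $(d+1)$-rigid by \Cref{t:cone}, so a short computation gives
\[
	\dim X_{G*o, d+1} \;=\; (d+1)(n+1) - \tbinom{d+2}{2} \;=\; (d+1)n - \tbinom{d+1}{2} \;=\; |E(G)| + n \;=\; |E(G*o)|.
\]
Thus a generic fibre of $\tilde{f}_{G*o,d+1}$ is cut out by $|E(G*o)|$ quadratic polynomials in as many variables on an affine space, so (an affine) Bézout bound, or the multihomogeneous variant explicitly developed for rigidity maps in \cite{Bartzos2020}, yields that the fibre contains at most $2^{|E(G*o)|}$ points. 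Dividing by the $2^{d+1}$ orthogonal symmetries given by \Cref{l:xgd} (applied in dimension $d+1$), I obtain
\[
	c_{d+1}(G*o) \;\leq\; 2^{|E(G*o)| - (d+1)} \;=\; 2^{(d+1)n - \binom{d+2}{2}},
\]
and setting $\alpha := 2^{d+1}$ gives $\theta_d(n) \leq c_d^*(G) = O(\alpha^n)$, as required.

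The main obstacle is making the Bézout step rigorous. A naive affine Bézout count only controls the projective intersection number, so one must argue (via genericity of the target in $\mathbb{C}^{|E(G*o)|}$, together with \Cref{p:dom}) that the extra solutions at infinity do not spoil the affine count, or appeal directly to the multihomogeneous bounds of \cite{Bartzos2020} and the mixed-volume bounds of \cite{Steffens2010}, which are formulated precisely for rigidity-type systems and bypass these subtleties. Since the statement only requires \emph{some} exponential constant, the precise value of $\alpha$ is unimportant, and any of these routes delivers a bound of the required shape.
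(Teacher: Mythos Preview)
Your argument is correct and takes a genuinely different route from the paper. The paper's proof is essentially a one-line citation: it invokes \cite{BartzosUpperBound} directly for the bound $\max\{c_d^*(G) : G \text{ minimally $d$-rigid on $n$ vertices}\} = O(\alpha^n)$, and then combines this with the trivial lower bound $c_d(G) \geq 1$. You share the second ingredient but replace the citation with a self-contained derivation: convert $c_d^*(G)$ into $c_{d+1}(G*o)$ via \Cref{t:conecount}, then bound the latter by a B\'ezout-type count on the Euclidean rigidity map $\tilde{f}_{G*o,d+1}$, exploiting that $X_{G*o,d+1}$ is a linear space of dimension equal to $|E(G*o)|$ when $G$ is minimally $d$-rigid. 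Your route is more explicit (it produces a concrete constant $\alpha = 2^{d+1}$) and stays within the paper's own toolkit, at the cost of the B\'ezout-at-infinity subtlety you already flag and correctly defer to \cite{Bartzos2020,Steffens2010}; the paper's route is shorter but offloads all the work to an external reference.
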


\begin{proof}
	As shown in \cite{BartzosUpperBound},
	there exists a constant $\alpha>1$ such that
	\begin{align*}
		\max \left\{ c^*_d(G) : G \text{ is minimally $d$-rigid with $n$ vertices} \right\} = O(\alpha^n).
	\end{align*}
	Since $c_d(G) \geq 1$ for any minimally $d$-rigid graph $G$ with at least $d+1$ vertices,
	it follows that $\theta_d(G) = O(\alpha^n)$ also.
\end{proof}

Following from \Cref{p:asym},
we define $\alpha_d$ to be the infimum of all values $\alpha \geq 1$ such that $\theta_d(n) = O(\alpha^n)$.
By \Cref{p:asymmin},
we have $\alpha_d \geq 1$ for any positive integer $d$.
An immediate corollary to \Cref{p:1dsame} is that $\alpha_1=1$.
To prove that $\alpha_d >1$ for any $d \geq 2$, it in fact suffices to prove that $\theta_d(n) >1$ for some positive integer $n$.

\begin{proposition}\label{p:lowerasym}
	Suppose that there exists a minimally $d$-rigid graph $G$ where $c^*_d(G)/c_d(G) >1$.
	Then $\alpha_d >1$.
\end{proposition}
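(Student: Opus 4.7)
Let $G_0$ be the hypothesised minimally $d$-rigid graph with $r := c^*_d(G_0)/c_d(G_0) > 1$ and $n_0 := |V(G_0)|$; note $n_0 \geq d+1$, since for $|V|\leq d$ the definitions force $c^*_d = c_d \in \{1,\infty\}$. The plan is to construct, for each $k \geq 1$, a minimally $d$-rigid graph $H_k$ with $|V(H_k)| = kn_0 + d$ and ratio $c^*_d(H_k)/c_d(H_k) = r^k$. Because $\theta_d$ is non-decreasing (immediate from \Cref{l:0ext}, as a $0$-extension of any witness graph with $n$ vertices yields a witness with $n+1$ vertices and the same ratio), this will yield $\theta_d(n) \geq r^{\lfloor(n-d)/n_0\rfloor}$ for all $n \geq d+n_0$, which forces $\alpha_d \geq r^{1/n_0} > 1$.

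The main tool will be the clique-sum along a $K_d$. If $G_1, G_2$ are minimally $d$-rigid graphs sharing exactly a common $K_d$ subgraph on a vertex set $W$, then $G := G_1 \cup_W G_2$ is again minimally $d$-rigid: the Maxwell edge count $|E_1|+|E_2|-\binom{d}{2}$ equals $d(|V_1|+|V_2|-d)-\binom{d+1}{2}$, and any infinitesimal motion of $G$ restricts to a trivial motion on each $G_i$, these trivial motions agreeing on $W$ and therefore coinciding globally (a short skew-symmetric-rank argument, using that skew-symmetric matrices have even rank but the difference must kill the span of $W$). Next, one shows that in both geometries,
\begin{align*}
c_d(G) = 2\,c_d(G_1)\,c_d(G_2) \qquad\text{and}\qquad c^*_d(G) = 2\,c^*_d(G_1)\,c^*_d(G_2).
\end{align*}
The factor $2$ appears uniformly as the cardinality of the stabilizer of a generic $K_d$-configuration inside the relevant congruence group ($O(d,\mathbb{C})\ltimes\mathbb{C}^d$ or $O(d+1,\mathbb{C})$), generated by reflection through the affine (resp.\ linear) span of $W$; once $W$ is pinned, equivalent realisations of $G$ decompose as a product over $G_1$ and $G_2$. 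Consequently, the ratio $c^*_d/c_d$ is exactly multiplicative under $K_d$-clique sums.

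Since $G_0$ need not contain a $K_d$, first enrich it: pick any vertices $u_1,\dots,u_d$ of $G_0$ and perform $d$ successive $0$-extensions, at step $k \in \{1,\dots,d\}$ attaching a new vertex $w_k$ adjacent to $\{w_1,\dots,w_{k-1}, u_k, u_{k+1},\dots,u_d\}$ (a valid $0$-extension, as $w_k$ has degree $d$). Every pair $w_iw_j$ with $i<j$ is then an edge (added at step $j$), so $\{w_1,\dots,w_d\}$ forms a $K_d$ in the enriched graph $G_0'$. By $d$ applications of \Cref{l:0ext}, $c_d(G_0')=2^d c_d(G_0)$ and $c^*_d(G_0')=2^d c^*_d(G_0)$, so the ratio is preserved: $c^*_d(G_0')/c_d(G_0')=r$. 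Define $H_k$ as the clique-sum of $k$ disjoint copies of $G_0'$ identified along the common $K_d$ on $\{w_1,\dots,w_d\}$; induction on $k$ via the multiplicativity formula gives $|V(H_k)|=kn_0+d$ and $c^*_d(H_k)/c_d(H_k)=r^k$, completing the construction.

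The principal obstacle will be the rigorous justification of the clique-sum multiplicativity formulae. Three ingredients are needed: first, verifying that a generic realisation of $G$ restricts to generic realisations of both $G_i$ (straightforward, since algebraic independence passes to subsets of coordinates); second, carrying out the stabilizer computation for a generic $K_d$-position to confirm it is $\mathbb{Z}/2$ in both the Euclidean and spherical settings; and third, reconciling the product decomposition of realisations with $W$ pinned with the quotient by the full congruence action, so that the factor $2$ appears uniformly on both sides and cancels cleanly in the final ratio.
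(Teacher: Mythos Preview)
Your approach is essentially the same as the paper's: enrich the given graph via $d$-dimensional $0$-extensions to produce a clique, then glue $k$ copies along that clique and observe that the ratio $c_d^*/c_d$ multiplies while the vertex count grows linearly. The one substantive difference is the size of the clique: you glue along a $K_d$, while the paper glues along a $K_{d+1}$ (built from an existing edge $v_0v_1$ by $d-1$ $0$-extensions rather than $d$). With $K_{d+1}$ the stabilizer of the shared clique in the congruence group is trivial, so the paper gets the cleaner formula $c_d(G_k)=c_d(G)^k$ (and likewise for $c_d^*$) with no factor to track; your $K_d$ version introduces the order-$2$ stabilizer and the formula $c_d(G_1\cup_W G_2)=2\,c_d(G_1)c_d(G_2)$, which is correct but requires the extra care you flag in your final paragraph (in particular, checking that the reflection through the affine span of $W$ acts freely on every equivalent realisation, which follows from \Cref{p:dom}\ref{p:dom3}). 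Both routes yield $\alpha_d>1$; the paper's $K_{d+1}$ choice just sidesteps the stabilizer bookkeeping.
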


\begin{proof}
	Fix $H$ to be any minimally $d$-rigid graph where $c^*_d(H)/c_d(H) >1$.
	If $H$ has $d$ vertices or less then $c^*_d(H)=c_d(H)=1$,
	thus $H$ has at least $d+1$ vertices and at least one edge.
	Choose any edge $v_0 v_1$ of $H$.
	Given $H_1 = H$,
	we inductively define the graphs $H_1, \ldots, H_d$ by constructing $H_{j+1}$ from $H_j$ by adding a new vertex $v_{j+1}$ adjacent to $v_0,\ldots,v_j$ and $d-j-1$ other vertices.
	Now fix $G := H_d$.
	Since $G$ is formed from $H$ by a sequence of $d$-dimensional 0-extensions,
	it is also minimally $d$-rigid.
	By \Cref{l:0ext} we have that
	\begin{align*}
		\frac{c_d^*(G)}{c_d(G)} = \frac{2^{d-1}c_d^*(H)}{2^{d-1}c_d(H)} = \frac{c_d^*(H)}{c_d(H)} >1.
	\end{align*}
	We now fix $S:=\{v_0,\ldots,v_d\}$ to be the constructed clique in $G$.
	
	Fix $v$ and $e$ to be the number of vertices and edges of $G$ respectively.
	We now construct for each positive integer $k$ the $n=(v-d)k+d$ vertex graph $G_k$ by gluing $k$ copies of $G$ at the clique $S$;
	see \Cref{fig:fan} for an example of $G_4$ when $G$ is the minimally 2-rigid 3-prism.
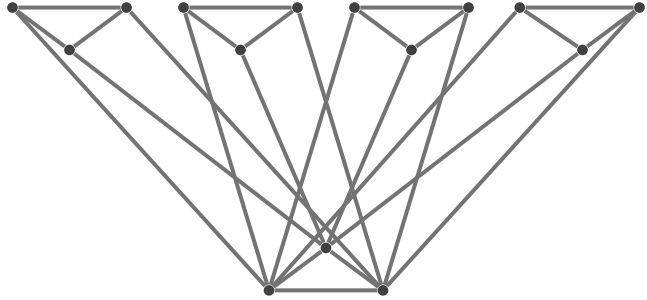
\begin{figure}[ht]
	\centering
		\begin{tikzpicture}[scale=0.75]
			\node[vertex] (a) at (-1,0) {};
			\node[vertex] (b) at (1,0) {};
			\node[vertex] (c) at (0,0.75) {};
			\node[vertex] (d) at (-5.5,5) {};
			\node[vertex] (e) at (-3.5,5) {};
			\node[vertex] (f) at (-4.5,4.25) {};
			\node[vertex] (g) at (-2.5,5) {};
			\node[vertex] (h) at (-0.5,5) {};
			\node[vertex] (i) at (-1.5,4.25) {};
			\node[vertex] (j) at (0.5,5) {};
			\node[vertex] (k) at (2.5,5) {};
			\node[vertex] (l) at (1.5,4.25) {};
			\node[vertex] (m) at (3.4,5) {};
			\node[vertex] (n) at (5.5,5) {};
			\node[vertex] (o) at (4.5,4.25) {};
			\draw[edge] (a)edge(b) (b)edge(c) (c)edge(a);
			\draw[edge] (d)edge(e) (e)edge(f) (f)edge(d);
			\draw[edge] (d)edge(a) (e)edge(b) (f)edge(c);
			\draw[edge] (g)edge(h) (h)edge(i) (i)edge(g);
			\draw[edge] (g)edge(a) (h)edge(b) (i)edge(c);
			\draw[edge] (j)edge(k) (k)edge(l)	(l)edge(j);
			\draw[edge] (j)edge(a) (k)edge(b) (l)edge(c);
			\draw[edge] (m)edge(n) (n)edge(o) (o)edge(m);
			\draw[edge] (m)edge(a) (n)edge(b) (o)edge(c);
		\end{tikzpicture}
	\caption{Gluing four copies of the 3-prism at a common triangle subgraph.}
	\label{fig:fan}
\end{figure}

	We first claim that each graph $G_k$ is minimally $d$-rigid.
	It is relatively intuitive that $G_k$ is $d$-rigid (for a rigorous proof of this statement see \cite[Theorem 2.5.2]{GraverServatius}).
	Since each graph $G_k$ has $k(v -d-1) + d+1$ vertices and
	\begin{align*}
		k\left(e- \binom{d+1}{2} \right) +\binom{d+1}{2} 
		&= k\left(\left(dv - \binom{d+1}{2}\right) - \binom{d+1}{2} \right) +\binom{d+1}{2}  \\
		&= d(k(v -d-1) + d+1) - \binom{d+1}{2} 
	\end{align*}
	edges,
	$G_k$ is also minimally $d$-rigid.
	
	We next claim that $c_d(G_k) = c_d(G)^k$ and $c^*_d(G_k) = c^*_d(G)^k$.
	In lieu of a technical proof, we sketch a proof of this claim as follows.
	Choose a general realisation $p$ of $G$ in $\mathbb{C}^d$ (respectively, $\mathbb{S}^d$).
	If we fix the vertices $v_0,\ldots,v_d$ and count the number of equivalent realisations of $(G,p)$,
	then we see that $(G,p)$ has $c_d(G)$ (respectively, $c^*_d(G)$) such equivalent realisations.
	Hence, each time we glue another copy of $G$ to go from $G_i$ to $G_{i+1}$,
	we must scale the number of realisations by $c_d(G)$ (respectively, $c^*_d(G)$).
	
	Given the graph $G_k$ has $n = k(v -d-1) + d+1$ vertices,
	we see that
	\begin{align*}
		\frac{c^*_d(G_k)}{c_d(G_k)} = \left( \frac{c^*_d(G)}{c_d(G)} \right)^k = \left( \frac{c^*_d(G)}{c_d(G)} \right)^{\frac{n-d-1}{v-d-1}} = \left( \frac{c^*_d(G)}{c_d(G)} \right)^{\frac{-d-1}{v-d-1}} \left( \left( \frac{c^*_d(G)}{c_d(G)} \right)^{\frac{1}{v-d-1}} \right)^n.
	\end{align*}
	Hence $\alpha_d \geq \left( c^*_d(G)/c_d(G)\right)^{\frac{1}{v-d-1}}  >1$.
\end{proof}

\begin{corollary}\label{c:asym12}
	$(4/3)^{3/8} \leq \alpha_2 \leq 2 \cdot 3^{1/2}$.
\end{corollary}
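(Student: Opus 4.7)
The plan is to prove the two bounds separately: the upper bound by invoking an existing asymptotic upper bound on spherical 2-realisation numbers, and the lower bound by applying the gluing construction implicit in the proof of \Cref{p:lowerasym} to a witness graph identified by computation.

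For the upper bound $\alpha_2 \leq 2\sqrt{3}$, I would invoke the asymptotic estimate of \cite{BartzosUpperBound}, which establishes $c_2^*(G) = O\bigl((2\sqrt{3})^n\bigr)$ for every minimally 2-rigid graph on $n$ vertices. Since $c_2(G) \geq 1$ for every 2-rigid graph with at least three vertices (the given realisation is equivalent to itself), the ratio $c_2^*(G)/c_2(G)$ is bounded by $c_2^*(G)$, so $\theta_2(n) = O\bigl((2\sqrt{3})^n\bigr)$ and $\alpha_2 \leq 2\sqrt{3}$ by the definition of $\alpha_2$ as an infimum.

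For the lower bound $(4/3)^{3/8} \leq \alpha_2$, I would repeat the construction from the proof of \Cref{p:lowerasym}, but applied directly to a graph that already contains a triangle (so that the preliminary clique-building 0-extensions are not needed). Specifically, if $H$ is a minimally 2-rigid graph on $v_H$ vertices containing a triangle and $r_H := c_2^*(H)/c_2(H)$, then gluing $k$ copies of $H$ along that triangle produces a minimally 2-rigid graph on $k(v_H - 3) + 3$ vertices with ratio $r_H^k$; sending $k \to \infty$ forces $\alpha_2 \geq r_H^{1/(v_H - 3)}$. It therefore suffices to exhibit a single witness $H$ satisfying $r_H^{1/(v_H - 3)} \geq (4/3)^{3/8}$. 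For instance, any 11-vertex minimally 2-rigid graph with a triangle and $r_H \geq 64/27 = (4/3)^3$ meets the bound with equality. Such a witness is identified by running the deterministic algorithms of \cite{PlaneCount,SphereCount} (implemented in \cite{ZenodoAlg,SphereAlg,RigiComp}) on the enumeration \cite{ZenodoData} of all minimally 2-rigid graphs with at most 12 vertices.

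The main obstacle will be the computational identification of the witness graph $H$: the structural step is already encapsulated in \Cref{p:lowerasym}, but the precise value $(4/3)^{3/8}$ cannot be obtained from the 3-prism alone (which gives only $(4/3)^{1/3}$ by triangle-gluing), so one genuinely needs to search the tabulated data for a graph whose ratio-to-vertex tradeoff attains the desired exponent. The upper bound, by contrast, is a direct translation of the known estimate from \cite{BartzosUpperBound} together with the trivial lower bound $c_2(G) \geq 1$.
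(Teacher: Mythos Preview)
Your proposal is correct and follows essentially the same approach as the paper: the upper bound via the known asymptotic bound on $c_2^*$ together with $c_2\geq 1$, and the lower bound via the triangle-gluing construction from \Cref{p:lowerasym} applied to a computationally identified 11-vertex witness (the paper names it explicitly as graph 5366995734673421 with $c_2=864$, $c_2^*=2048$, so $r_H=64/27$ exactly). One minor bibliographic point: the paper attributes the explicit constant $2\sqrt{3}$ to \cite[Theorem 18]{Bartzos2023} rather than \cite{BartzosUpperBound}.
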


\begin{proof}
	The upper bound for $\alpha_2$ follows from the method employed in \Cref{p:asym}
	with the upper bound for $c_2^*$ over all $n$ vertex minimally 2-rigid graphs being given by 
	\begin{align*}
		8 \cdot 3^{-7/2}\cdot (2 \cdot 3^{1/2})^{n-2};
	\end{align*}
	see \cite[Theorem 18]{Bartzos2023} (remember that our $d$-realisation number is exactly half of the defined $d$-realisation number used in \cite{Bartzos2023}).

	It follows from the proof of \Cref{p:lowerasym} that we can maximise our lower bound for $\alpha_2$ by searching for minimally 2-rigid graphs $G$ with $v$ vertices that contain a triangle where the value $\left( c^*_2(G)/c_2(G)\right)^{\frac{1}{v-3}} $ is high.
	In \Cref{tab:lowerboundgraphs} we have collected some examples where this value is high.
	The name of each graph comes from an integer representation of its adjacency matrix,
	where we take the entries of the upper triangular part (since we always have loop-free graphs) and consider the sequence of row-wise entries as binary digits.
	For example, the triangle graph $K_3$ can be written as $(1,1,1)_2\hat=7$,
	and the 3-prism (\Cref{fig:threeprismplane}) can be written as $(1,1,1,1,0,1,1,1,0,1,1,0,0)_2\hat=7916$. See \cite{LowerBounds} or \cite{ZenodoData} for more details.
	The graphs in the table are those that achive the highest value for $(c_2^*/c_2)^{1/(v-3)}$ with the respective number of vertices.
\begin{table}[ht]
  \centering
  \begin{tabular}{lllllll}
		\toprule
		$v$ & $G$                  & $c_2$ & $c_2^*$ & $c_2^*/c_2$        & $(c_2^*/c_2)^{1/(v-3)}$ \\\midrule
		6   & 7916                 & 12    & 16      & \bnr{1.3333}{colG!50!white} & \bnrsm{1.10064}{20}{colB!50!white}     \\
		7   & 1256267              & 24    & 32      & \bnr{1.3333}{colG!50!white} & \bnrsm{1.07457}{20}{colB!50!white}     \\
		8   & 170957470            & 64    & 96      & \bnr{1.5}{colG!50!white}    & \bnrsm{1.08447}{20}{colB!50!white}     \\
		9   & 2993854888           & 160   & 288     & \bnr{1.8}{colG!50!white}    & \bnrsm{1.10292}{20}{colB!50!white}     \\
		10  & 4847160401729        & 400   & 768     & \bnr{1.92}{colG!50!white}   & \bnrsm{1.09767}{20}{colB!50!white}     \\
		11  & 5366995734673421     & 864   & 2048    & \bnr{2.3704}{colG!50!white} & \bnrsm{1.11391}{20}{colB!50!white}     \\
		12  & 37615476241376327552 & 2016  & 5120    & \bnr{2.5397}{colG!50!white} & \bnrsm{1.10911}{20}{colB!50!white}     \\\bottomrule
  \end{tabular}
  \caption{Graphs and the ratios we obtain from their number of realisations.}
  \label{tab:lowerboundgraphs}
\end{table}

	Fix $G$ to be the 11 vertex graph 5366995734673421, pictured in \Cref{fig:bestforfan}.
	From observation of the table, $G$ has the highest value for $\left( c^*_2/c_2\right)^{\frac{1}{v-3}} $ at $(2048/864)^{\frac{1}{11-3}} \approx 1.11391$.
	Hence, $\alpha_2 \geq (4/3)^{3/8} = (2048/864)^{1/8}$ as required.
	(Note that there can be graphs with more vertices which give a better bound but have not been computed yet.)
\end{proof}

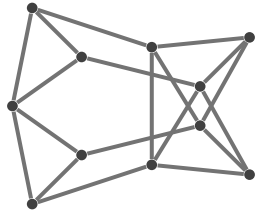
\begin{figure}[ht]
  \centering
  \begin{tikzpicture}[scale=1.3]
		\node[vertex] (1) at (0.7, -0.5) {};
		\node[vertex] (2) at (0.7, 0.5) {};
		\node[vertex] (3) at (0.2, 1) {};
		\node[vertex] (4) at (0.2, -1) {};
		\node[vertex] (5) at (2.4, 0.7) {};
		\node[vertex] (6) at (2.4, -0.7) {};
		\node[vertex] (7) at (0., 0.) {};
		\node[vertex] (8) at (1.9, -0.2) {};
		\node[vertex] (9) at (1.9, 0.2) {};
		\node[vertex] (10) at (1.41, -0.6) {};
		\node[vertex] (11) at (1.41, 0.6) {};
		\draw[edge] (1)edge(4) (1)edge(7) (1)edge(8) (2)edge(3) (2)edge(7)
		(2)edge(9) (3)edge(7) (3)edge(11) (4)edge(7) (4)edge(10) (5)edge(8)
		(5)edge(9) (5)edge(11) (6)edge(8) (6)edge(9) (6)edge(10) (8)edge(11)
		(9)edge(10) (10)edge(11);
	\end{tikzpicture}
	\caption{A graph (known as 5366995734673421) with 864 realisations in the plane and 2048 on the sphere which gives a bound for the triangle-fan of $(2048/864)^{1/8} = (4/3)^{3/8}$.}
	\label{fig:bestforfan}
\end{figure}

By rounding the lower bound down and the upper bound up, \Cref{c:asym12} informs us that $1.1139 \leq \alpha_2 \leq 3.4642$.
It is conjectured in \cite{LowerBounds} that $c_2(G) \geq 2^{|V|-3}$.
If this conjecture is true, we could immediately improve our upper bound to be roughly $1.7321$.

We conclude the subsection by making the following conjecture.

\begin{conjecture}
	For every $d>1$ we have $\alpha_d >1$.
\end{conjecture}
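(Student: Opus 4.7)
By Proposition 7.5, the conjecture reduces to the following problem: for each $d \geq 3$, exhibit a single minimally $d$-rigid graph $G_d$ with $c_d(G_d) < c_d^*(G_d)$, since the fan construction of that proposition then propagates the strict inequality into an exponential lower bound on $\theta_d(n)$. The base case $d = 2$ is already Corollary 7.6. My plan is to produce the required witness graphs for each $d \geq 3$ through a combination of structural reasoning and computer-aided search.

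Two structural obstructions immediately restrict the search space. By Corollary 5.4, every coned graph $G_d = H * o$ satisfies $c_d(G_d) = c_d^*(G_d)$, so the witness $G_d$ cannot have a universal vertex. By Lemma 7.1, every $d$-dimensional 0-extension multiplies both $c_d$ and $c_d^*$ by the same factor $2$, so appending a degree-$d$ vertex cannot create a strict inequality. Hence every vertex of $G_d$ must have degree strictly between $d$ and $|V(G_d)|-1$. In particular, for $d = 3$, the smallest possible candidates are graphs on $\geq 6$ vertices in which every degree is at least $4$; on exactly $6$ vertices the unique such minimally $3$-rigid graph is the octahedron $K_{2,2,2}$.

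With these constraints in mind, I would investigate two candidate families. The first consists of ``generalised $d$-prisms'' built from two copies of $K_{d+1}$ glued along a common $(d-1)$-face, joined by a matching between the remaining vertices, together with the minimum number of additional edges needed to reach the sparsity count $|E| = d|V| - \binom{d+1}{2}$; this specialises to the $3$-prism when $d = 2$. The second family consists of highly symmetric vertex-transitive minimally $d$-rigid graphs, whose automorphism groups can yield additional distinct realisations on the sphere thanks to the richer action of $O(d+1, \mathbb{C})$ compared with the Euclidean group. For each candidate I would first verify minimal $d$-rigidity, either combinatorially or through a direct rank computation of the rigidity matrix, and then attempt to establish strict inequality of the realisation numbers.

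The main obstacle is this final step, because no deterministic algorithm is currently known for computing $c_d$ or $c_d^*$ when $d \geq 3$ and Gr\"obner basis methods quickly become infeasible. A viable approach is to combine a multihomogeneous B\'ezout or mixed-volume upper bound on $c_d(G_d)$ \cite{Steffens2010, Bartzos2020} with a certified lower bound on $c_d^*(G_d)$ obtained by explicitly producing distinct real spherical realisations from the symmetries of $G_d$. The deepest difficulty is producing witnesses uniformly for all $d \geq 3$ simultaneously. A promising route would be to develop an inductive construction that lifts a strict inequality from dimension $d$ to dimension $d+1$ without passing through coning; for instance, one might substitute a carefully chosen rigid block at the cone vertex of $H * o$, where $H$ is a minimally $(d-1)$-rigid witness, in such a way that minimal $d$-rigidity is preserved while the spherical--Euclidean equality forced by Corollary 5.4 is broken. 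Formulating such an operation, and verifying that it creates rather than destroys the strict inequality, is the genuinely new ingredient any complete proof of the conjecture would need.
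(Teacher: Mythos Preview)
The statement is a \emph{conjecture} in the paper, not a theorem; the paper offers no proof, only a brief discussion of why the problem is hard. Your proposal is likewise not a proof but a research plan, and you essentially say so yourself. The structural observations you make are correct and match the paper's discussion: Proposition~7.5 reduces the question to exhibiting a single minimally $d$-rigid witness with $c_d^*(G) > c_d(G)$ for each $d \geq 3$; Corollary~5.4 rules out coning as a way of propagating a witness between dimensions; and Lemma~7.1 rules out degree-$d$ vertices. The paper also reports that Gr\"obner-basis computations on small graphs in dimensions $d \geq 3$ found \emph{no} witnesses at all (every graph tested had $c_d^* = c_d$), so the computer-search component of your plan has already been attempted without success.

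There is also a concrete weakness in your proposed certification step. You suggest bounding $c_d(G)$ from above via B\'ezout or mixed-volume estimates and $c_d^*(G)$ from below by exhibiting explicit real spherical realisations coming from the symmetries of $G$. But graph symmetries produce real Euclidean realisations exactly as readily as spherical ones, so they give no leverage on the \emph{difference} $c_d^* - c_d$; and B\'ezout/mixed-volume bounds are typically far from tight, so there is no reason to expect the chain $(\text{upper bound on } c_d) < (\text{lower bound on } c_d^*)$ to close. The honest summary is that no method, structural or computational, is currently known to separate $c_d$ from $c_d^*$ in any dimension $d \geq 3$, and your proposal does not supply one.
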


As can be seen from \Cref{p:lowerasym}, it suffices for us to find a single minimally $d$-rigid graph in each dimension $d>2$ where $c^*_d(G) >c_d(G)$.
It follows from \Cref{cor:conecount2} that the problem cannot be solved by merely finding a suitable graph for one dimension and then coning to obtain similar suitable graphs in higher dimensions.
Since there is no deterministic algorithm for higher dimensions,
we could only use polynomial system solving tools (like for instance Gröbner basis) with random edge lengths.
As well as only being able to give a probabilistic answer,
this method is computationally infeasible and can only be done for small numbers of vertices.
For the graphs we were able to compute using this method we saw that $c^*_d(G)=c_d(G)$ always, 
but this is most likely because they were too small for the values to begin differing.

\subsection{Exploring data sets}

The next computational question we approach is the following:
how often do the spherical $d$-realisation number and the $d$-realisation number agree for minimally $d$-rigid graphs?
Interestingly these two numbers seem to differ more than they agree.
\Cref{tab:differences} shows the amount of minimally 2-rigid graphs 
for which the two realisation numbers differ.
We solely consider those minimally 2-rigid graphs with minimal degree 3 since the removal of a degree 2 vertex alters both the spherical 2-realisation number and the 2-realisation number by a factor of $1/2$.

\begin{figure}[ht]
  \centering
  \begin{tikzpicture}
    \newcommand{\s}{10}
    \foreach \nn/\gt/\ge/\gi/\e/\i [evaluate=\nn as \n using \nn/2] in {6/2/1/1/0.5/0.5,7/4/1/3/0.25/0.75,8/32/7/25/0.21875/0.78125,9/264/42/222/0.159091/0.840909,10/3189/330/2859/0.103481/0.896519,11/46677/3063/43614/0.0656212/0.934379,12/813875/32855/781020/0.0403686/0.959631}
    {
        \node[labelsty,colfg,anchor=east] at (0,-\n) {$n=\nn$};
        \fill[colB] (0,-\n-0.2) rectangle (\s*\e,-\n+0.2);
        \fill[colR] (\s*\e,-\n-0.2) rectangle (\s*\e+\s*\i,-\n+0.2);
        \node[slabelsty,colbg,anchor=west] at (0,-\n) {\ge};
        \node[slabelsty,colbg,anchor=east] at (\s*\e+\s*\i,-\n) {\gi};
        \node[slabelsty,colfg,anchor=west] at (\s*\e+\s*\i,-\n) {\gt};
    }
    \node[labelsty,colB,anchor=west] at (0,-2.5) {\# graphs with $c_2=c_2^*$};
    \node[labelsty,colR,anchor=east] at (\s,-2.5) {\# graphs with $c_2\neq c_2^*$};
  \end{tikzpicture}

  \caption{The number of minimally 2-rigid graphs (up to isomorphism) with $n$ vertices and minimum degree 3 for which $c_2(G)$ is the same as/different from $c_2^*(G)$.}
  \label{tab:differences}
\end{figure}

In light of this experimental data,
we believe that the following conjecture is true.

\begin{conjecture}
	Let $d$ be an integer greater than 1.
	Let $A_{n,d}$ (respectively, $B_{n,d}$) be the set of all minimally $d$-rigid graphs (up to isomorphism) with $n$ vertices and minimum degree $2d-1$ where $c^*_d(G)=c_d(G)$ (respectively, $c^*_d(G)\neq c_d(G)$).
	Then $|A_{n,d}|/ |B_{n,d}| \rightarrow 0$ as $n \rightarrow \infty$.
\end{conjecture}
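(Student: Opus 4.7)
The plan is to attack this conjecture by combining a structural sufficient condition with an asymptotic counting argument in the style of extremal graph theory. The statement is equivalent to $|A_{n,d}|/T_{n,d} \to 0$, where $T_{n,d} := |A_{n,d}| + |B_{n,d}|$ is the total number of minimally $d$-rigid graphs on $n$ vertices with minimum degree $2d-1$. Since $T_{n,d}$ grows at some exponential rate (via the growth of $\max c_d^*$ used in the proof of \Cref{p:asym}), it suffices to show that $|A_{n,d}|$ grows at a strictly smaller exponential rate.

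First I would identify a forbidden substructure $H$ such that any minimally $d$-rigid graph $G$ containing $H$ as a rigid block attached along a $d$-clique satisfies $c_d(G) < c_d^*(G)$. The proof of \Cref{p:lowerasym} already shows that realisation numbers are multiplicative under $d$-clique gluing: if $G$ decomposes as $G_1 \cup_K G_2$ along a $d$-clique $K$, then $c_d(G) = c_d(G_1) c_d(G_2)$ and $c_d^*(G) = c_d^*(G_1) c_d^*(G_2)$, so the ratio $c_d^*(G)/c_d(G)$ factors accordingly. For $d=2$ one could take $H$ to be the 3-prism (with $c_2 = 12 < 16 = c_2^*$). The second step would be to show that asymptotically almost all members of $T_{n,d}$ contain such a block $H$. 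A natural route is a Henneberg-constructive enumeration: every minimally $d$-rigid graph is built from a base $(d+1)$-clique by a sequence of Henneberg extensions, and one would estimate the probability of introducing a 3-prism-like block in a uniformly random Henneberg sequence, via an entropy or transfer-matrix argument yielding the required rate separation.

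The hard part will be controlling the counting in the second step. Unconditional asymptotic enumeration of minimally 2-rigid (Laman) graphs has no clean closed form, and the restriction to minimum degree $2d-1$ further complicates any Henneberg-based argument because degree-$d$ vertices are excluded, so the corresponding Henneberg sequences must avoid all $0$-extensions, breaking any simple Markovian structure. Moreover, the sufficient condition in the first step, while plausible, must be made precise: a proper formalization of ``rigid block'' in dimension $d > 2$ requires working with the abstract $d$-rigidity matroid, whose decomposition theory is much weaker than Laman's in dimension $2$. An alternative is a purely probabilistic route: fix a natural model of random minimally $d$-rigid graphs (for instance, uniform over Henneberg sequences of length $n-d-1$) and show that a sample from this model almost surely has $c_d < c_d^*$. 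The experimental data in \Cref{tab:differences} strongly supports the conjecture, but no existing framework seems to immediately deliver the required rate separation, and genuinely new combinatorial or probabilistic ideas appear necessary.
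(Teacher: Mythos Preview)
The statement you are attempting to prove is a \emph{conjecture} in the paper, not a theorem: the authors offer no proof whatsoever, only the experimental evidence in \Cref{tab:differences}. There is therefore nothing in the paper to compare your proposal against. Your own final paragraph concedes as much, noting that ``genuinely new combinatorial or probabilistic ideas appear necessary''; what you have written is an outline of a possible attack, not a proof.

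On the merits of the outline itself, there is a concrete error worth flagging. You claim that $T_{n,d}$ grows at some exponential rate ``via the growth of $\max c_d^*$ used in the proof of \Cref{p:asym}''. This is a non sequitur: \Cref{p:asym} bounds the \emph{realisation number} of a single graph, which says nothing about the \emph{number of non-isomorphic minimally $d$-rigid graphs} with a given vertex count and minimum degree. The exponential growth of $T_{n,2}$ is plausible from the data, but you would need a separate enumeration argument to establish it, and for $d>2$ even the existence of infinitely many minimally $d$-rigid graphs with minimum degree exactly $2d-1$ is not entirely trivial. Beyond this, your two-step plan (find a gadget $H$ with $c_d(H)<c_d^*(H)$, then show almost every graph in $T_{n,d}$ contains $H$ as a clique-attached block) is reasonable as a heuristic, but the second step is genuinely open: there is no known asymptotic enumeration of minimally $d$-rigid graphs with prescribed minimum degree, and the Henneberg route you sketch is obstructed precisely because the minimum-degree-$(2d-1)$ constraint forbids $0$-extensions, as you yourself note.
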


After discussing the number of graphs for which the realisation counts differ, we are also interested in how they differ.
\Cref{fig:distribution12} shows this relation for all 813875 minimally 2-rigid graphs with 12 vertices and minimum degree 3.
We observe that there are 9916 different pairs $(c_2(G),c^*_2(G))$ (ignoring repetition) that occur.
Only five of those pairs have equal coordinates (i.e., $c_2(G)=c^*_2(G)$):
$(512,512)$, $(768,768)$, $(869,869)$, $(960,960)$ and $(1024,1024)$.
The majority (30789) of graphs for which $c_2(G)=c^*_2(G)$ have 512 realisations.
In the figure we colour coded the amount of occurrences of the pairs $(c_2(G),c^*_2(G))$,
with blue indicating few occurrences and red implying many occurrences.
The most frequent pair is $(768,1024)$, which occurs for 76025 graphs.
The pair with the largest difference is $(2496, 6144)$ which is obtained by a single graph.
Including the graph indicated in the last row of \Cref{tab:lowerboundgraphs}, there are 20 graphs with the pair $(2016, 5120)$, which gives the largest ratio.

In \Cref{fig:ratios} we analyse more deeply the ratios $c^*_2(G)/c_2(G)$ for minimally 2-rigid graphs with minimal degree 3.
We can see, confirming \Cref{p:asymmin}, that the minimum achievable ratio is 1.
The maximum ratio in the range is achieved by some graphs with 12 vertices, one of which is given in \Cref{tab:lowerboundgraphs}.
Interestingly, the median ratio varies between 1.39 and 1.4 depending on vertex number.
Although the maximum achievable ratio is increasing exponentially (see \Cref{c:asym12}),
the range of the quartiles do not seem to vary much as the number of vertices is increased.
Note, however, that the size of the graphs considered is rather limited.

\begin{figure}[ht]
 \centering
 \includegraphics[width=6cm]{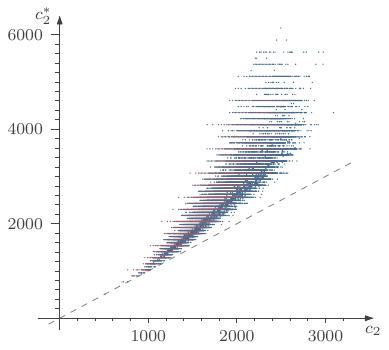}
 \caption{The distribution of pairs $(c_2(G),c^*_2(G))$ for all minimally 2-rigid graphs $G$ with 12 vertices and minimum degree three. We use colour to represent the number of occurrences of a given pair: the more often a point appears in the list of all possible pairs, the more red its coordinate representation is in our plot. All points lie above the dashed line representing the equality $c_2(G)=c^*_2(G)$, showing the main result of the paper.}
 \label{fig:distribution12}
\end{figure}

\begin{figure}[ht]
 \centering
 \begin{tikzpicture}[yscale=1.2,boxplot/.style={draw=colB,fill=colB!50!white,rounded corners=0.5pt}]
  \foreach \n [count=\i] in {8,...,12}
  {
    \draw[aline] (\i,0.9) node[below,alabelsty] {$\n$};
  }
  \foreach \y [count=\i] in {1,1.5,...,2.5}
  {
    \draw[bline] (0,\y)--(-0.1,\y) node[left,alabelsty] {\y};
    \draw[bline] (0,\y)--(6,\y);
  }
  \node[rotate=-90,alabelsty] at (6.25,1.75) {ratio};
  \node[alabelsty] at (3,2.75) {$|V|$};
  \foreach \min/\q/\median/\p/\max [count=\i] in {1./1.06667/1.14286/1.33333/1.5, 1./1.09091/1.17647/1.33333/1.8, 1./1.14286/1.26316/1.33333/1.92, 1./1.14286/1.33333/1.41176/2.37037, 1./1.21212/1.33333/1.48837/2.53968}
  {
    \draw[boxplot] (\i,\min)--(\i,\q) (\i,\p)--(\i,\max) (\i-0.125,\min)--(\i+0.125,\min) (\i-0.125,\max)--(\i+0.125,\max);
    \draw[boxplot] (\i-0.25,\q) rectangle (\i+0.25,\p);
    \draw[boxplot] (\i-0.25,\median)--(\i+0.25,\median);
  }
 \end{tikzpicture}
 \caption{A box plot of the distribution of ratios $c^*_2(G)/c_2(G)$ for minimally 2-rigid graphs with between 8 to 12 vertices and minimal degree 3.}
 \label{fig:ratios}
\end{figure}
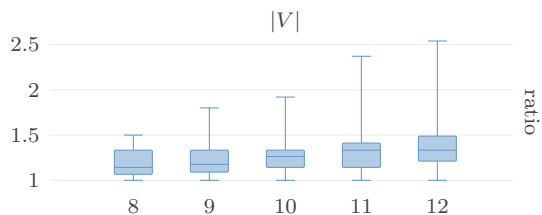

\addcontentsline{toc}{section}{Acknowledgments}
\section*{Acknowledgements}

The authors would like to thank Matteo Gallet, Dániel Garamvölgyi and Jan Legersk\'{y} for their helpful discussions and valuable feedback.

Both authors were supported by the Austrian Science Fund (FWF): P31888.
SD was also supported by the Heilbronn Institute for Mathematical Research.

\phantomsection
\addcontentsline{toc}{section}{References}
\bibliographystyle{plainurl}
\bibliography{References}

\appendix

\section{Dominant morphisms}
\label{sec:dom}

Dominant morphisms can be defined in a variety of different but equivalent ways.

\begin{theorem}[{\cite[Section AG, Theorem 17.3]{borel91}}]\label{borel91}
	Let $X \subset \mathbb{C}^m$ be an algebraic set and $Y \subset \mathbb{C}^n$ be a variety.
	Then the following are equivalent for any morphism $f :X \rightarrow Y$:
	\begin{enumerate}
		\item $f$ is dominant.
		\item For some irreducible component $X'$ of $X$, there exists a point $x \in X'$ such that $x$ is a non-singular point of $X'$ and $\rank \diff f(x) = \dim Y$.
		\item There exists a Zariski open subset $U \subset X$ where for each $x \in U$, $x$ is a non-singular point of $X$ and $\rank \diff f(x) = \dim Y$.
	\end{enumerate}
\end{theorem}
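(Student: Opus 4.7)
The plan is to prove the cyclic chain of implications (iii) $\Rightarrow$ (ii) $\Rightarrow$ (i) $\Rightarrow$ (iii). The first implication is essentially tautological: any $x$ in the Zariski open set $U$ is non-singular in $X$, hence lies in a unique irreducible component $X'$ of $X$, and the rank condition transfers immediately to the pair $(X',x)$.

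For (ii) $\Rightarrow$ (i), I would appeal to the holomorphic inverse function theorem. Near a non-singular point $x$ of $X'$, the variety $X'$ is locally biholomorphic to a complex ball of dimension $\dim X'$, and $f|_{X'}$ is holomorphic with $\rank \diff f(x) = \dim Y$. Therefore the image under $f|_{X'}$ of a small analytic neighbourhood of $x$ contains an open piece of a $\dim Y$-dimensional complex submanifold of $\mathbb{C}^n$. Since this piece sits inside $Y$, the Zariski closure $\overline{f(X')}$ is an irreducible subvariety of $Y$ of dimension at least $\dim Y$; as $Y$ is irreducible of this same dimension, $\overline{f(X')} = Y$, and dominance of $f$ follows.

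The main obstacle is (i) $\Rightarrow$ (iii), for which I would invoke generic smoothness (the algebraic Sard theorem), genuinely using that the characteristic of $\mathbb{C}$ is zero. Since $X$ has only finitely many irreducible components and $f(X)$ is Zariski dense in the irreducible variety $Y$, at least one component $X'$ must already satisfy $\overline{f(X')} = Y$. Generic smoothness applied to the dominant morphism $f|_{X'}\colon X' \to Y$ yields a non-empty Zariski open $V \subseteq Y$ over which the restriction to the smooth locus $X'^{\mathrm{sm}}$ is a submersion; hence $U' := (f|_{X'^{\mathrm{sm}}})^{-1}(V)$ is a non-empty Zariski open subset of $X'$ on which $\rank \diff f(x) = \dim Y$. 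Writing $U' = W \cap X'$ for some Zariski open $W \subseteq X$ (possible because the complement of $U'$ in $X'$ is closed in $X$) and setting $U := W \setminus \bigcup_{X'' \neq X'} X''$ produces a Zariski open subset of $X$ whose points lie in $X'$ alone, are non-singular in $X$, and satisfy the rank condition. I expect the bookkeeping around multiple irreducible components and singular points to be the main technical hazard; in positive characteristic an everywhere-inseparable dominant morphism would violate (iii), confirming that the characteristic $0$ hypothesis cannot be omitted from this step.
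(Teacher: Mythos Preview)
The paper does not supply its own proof of this statement: it is quoted as \cite[Section AG, Theorem 17.3]{borel91} and used as a black box in the appendix, so there is no argument in the paper to compare your proposal against.

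That said, your sketch is the standard route and is sound. The cycle (iii) $\Rightarrow$ (ii) $\Rightarrow$ (i) $\Rightarrow$ (iii) is the natural organisation; the use of the holomorphic inverse function theorem for (ii) $\Rightarrow$ (i) and generic smoothness in characteristic zero for (i) $\Rightarrow$ (iii) are exactly the ingredients one expects. Your handling of the multiple irreducible components is correct: removing the other components from the open set $W$ does yield a non-empty Zariski open subset of $X$ consisting of points that are smooth in $X$ (not merely in $X'$) and satisfy the rank condition. One small point worth making explicit is that condition (iii) should be read with $U$ non-empty (otherwise the equivalence is vacuous in one direction); your argument implicitly assumes this, and it is consistent with how the paper uses the result.
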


It follows immediately from \Cref{borel91} that for any varieties $X,Y$,
the existence of a dominant morphism from $X$ to $Y$ implies $\dim X \geq \dim Y$.
As can be seen by \Cref{t:deg},
more powerful statements can be attained relating to $f$ if $\dim X=\dim Y$.
To prove \Cref{t:deg}, we require the following two results.

\begin{corollary}[{\cite[Section 8, Corollary 1]{mumford}}]\label{mumford}
	Let $X \subset \mathbb{C}^m$ and $Y \subset \mathbb{C}^n$ be varieties and $f:X \rightarrow Y$ be a dominant morphism.
	Then there exists a non-empty Zariski open subset $U \subset Y$ such that $U \subset f(X)$, and for every $y \in U$, every irreducible component of the algebraic set $f^{-1}(y)$ has dimension $\dim X - \dim Y$.
\end{corollary}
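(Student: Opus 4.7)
The plan is to reduce to a local, affine setting and then combine generic flatness with the constancy of fiber dimension for flat morphisms. The conclusion is local on $Y$, so I would first shrink $Y$ to a non-empty affine open, and then shrink $X$ to an affine open whose image is dense in the chosen affine part of $Y$ (possible because $f$ is dominant and $X$ is irreducible). This reduces the problem to the case where both $X$ and $Y$ are affine and irreducible, at which point we are analysing a dominant morphism of finitely generated integral $\mathbb{C}$-algebras.

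Next I would invoke the generic flatness theorem: for a finite-type morphism between irreducible Noetherian schemes, there exists a non-empty open subset $U \subset Y$ over which the restriction $f \colon f^{-1}(U) \to U$ is flat. This is the deep technical input; the classical proof proceeds by Noetherian induction on $\dim Y$ combined with Noether normalization applied to suitable generators of the coordinate ring of $X$ over that of $Y$.

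Once flatness over $U$ is in hand, I would apply the equidimensionality theorem for flat morphisms: for any flat morphism of finite type between integral Noetherian schemes, at every point $z$ the local dimension formula
\[
\dim_z f^{-1}(f(z)) \;=\; \dim_z X - \dim_{f(z)} Y
\]
holds. Because $X$ and $Y$ are varieties and hence equidimensional at every point, this yields that for every $y \in U$ each irreducible component of $f^{-1}(y)$ has dimension $\dim X - \dim Y$. Finally, Chevalley's theorem on constructible images guarantees that $f(X)$ is constructible in $Y$, and dominance of $f$ together with irreducibility of $Y$ promotes this to the fact that $f(X)$ contains a non-empty open subset of $Y$; intersecting with $U$ gives the non-empty open $U \subset f(X)$ required by the statement.

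The main obstacle is generic flatness itself, which carries all of the hard commutative algebra; everything else is essentially formal given that input. A secondary subtlety is ensuring that no spurious lower-dimensional components appear in the fibers over $U$, which is exactly what flatness, rather than a weaker hypothesis like upper semicontinuity of fiber dimension alone, is needed to rule out.
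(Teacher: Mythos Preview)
The paper does not supply its own proof of this corollary: it is quoted verbatim from Mumford's \emph{Red Book} \cite[Section 8, Corollary 1]{mumford} and used as a black box in the proof of \Cref{t:deg}. So there is nothing to compare your argument against on the paper's side.

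That said, your outline is a correct route to the result, though it differs from Mumford's own argument. Mumford proceeds directly via Noether normalisation: writing the coordinate ring of $X$ as a finite extension of a polynomial ring over the coordinate ring of $Y$, he reduces to the case of a finite dominant map composed with a projection, where the fibre dimensions are visibly constant on a dense open set. Your approach instead packages the hard commutative algebra into the generic flatness theorem and then reads off the conclusion from the fibre-dimension formula for flat morphisms. Both are standard; yours is arguably the more modern presentation, at the cost of invoking a heavier theorem (generic flatness) than is strictly necessary. Your use of Chevalley's theorem to ensure $U \subset f(X)$ is fine, though once you have flatness over $U$ you already know $f|_{f^{-1}(U)} \to U$ is open (flat morphisms of finite type are open), so surjectivity onto $U$ is immediate without a separate appeal to Chevalley.
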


\begin{corollary}[{\cite[Proposition 7.16]{Harris}}]\label{c:harris}
	Let $X \subset \mathbb{C}^m$ and $Y \subset \mathbb{C}^n$ be varieties and $f:X \rightarrow Y$ be a dominant morphism.
	Suppose that there exists a non-empty Zariski open subset $U \subset Y$ where $|f^{-1}(y)|<\infty$ for every $y \in Y$.
	Then there exists a $k \in \mathbb{N}$ and a non-empty Zariski open subset $U' \subset U$ where $|f^{-1}(y)| = k$ for every $y \in U'$.
\end{corollary}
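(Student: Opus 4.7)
The plan is to show that the generic fibre size equals the degree of the function-field extension $\mathbb{C}(Y) \hookrightarrow \mathbb{C}(X)$ induced by pullback along $f$. First, combining the finiteness hypothesis with \Cref{mumford} forces $\dim X = \dim Y =: n$: the open set $V \subset Y$ given by that corollary consists of points $y$ where every component of $f^{-1}(y)$ has dimension $\dim X - \dim Y$, and finiteness of $f^{-1}(y)$ for $y \in U \cap V$ then forces this difference to be zero. Since $f$ is dominant and $\dim X = \dim Y$, the extension $\mathbb{C}(X)/\mathbb{C}(Y)$ is finite algebraic; set $k := [\mathbb{C}(X):\mathbb{C}(Y)]$.

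Next, by the primitive element theorem (available in characteristic $0$), I would choose $\alpha \in \mathbb{C}(X)$ with $\mathbb{C}(X) = \mathbb{C}(Y)(\alpha)$. Taking $\alpha$ to be a generic $\mathbb{C}$-linear combination of the coordinate functions of $X \subset \mathbb{C}^m$, I can simultaneously arrange that $\alpha$ is regular on $X$ and that it separates the points of $f^{-1}(y)$ for $y$ in a non-empty Zariski open subset of $U$ (possible since the fibres are finite). Let $P(T) = T^k + a_{k-1}(y)T^{k-1} + \cdots + a_0(y) \in \mathbb{C}(Y)[T]$ be the minimal polynomial of $\alpha$ over $\mathbb{C}(Y)$, and shrink $U$ to a non-empty open subset $U'$ on which each $a_i$ is regular, the discriminant of $P(y,T)$ is non-zero, and $\alpha$ separates points of every fibre. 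For any $y \in U'$, the assignment $x \mapsto \alpha(x)$ then embeds $f^{-1}(y)$ into the $k$-element set $\{t \in \mathbb{C} : P(y,t)=0\}$, yielding the upper bound $|f^{-1}(y)| \leq k$.

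The main obstacle is establishing the matching lower bound $|f^{-1}(y)| \geq k$, which requires showing that every root of $P(y,\cdot)$ actually arises as $\alpha(x)$ for some $x \in f^{-1}(y)$. My approach would be to study the morphism $g := (f,\alpha)\colon X \to Y \times \mathbb{C}$. The hypersurface cut out by $P$ in $Y \times \mathbb{C}$ is irreducible of dimension $n$, and because $\alpha$ generates $\mathbb{C}(X)$ over $\mathbb{C}(Y)$ the Zariski closure of $g(X)$ coincides with this hypersurface. By Chevalley's theorem, $g(X)$ is constructible and dense in the hypersurface, hence contains a non-empty Zariski open subset $W$ of it. The complement of $W$ inside the hypersurface has dimension strictly less than $n$, so its image in $Y$ under the projection $(y,t) \mapsto y$ is constructible of dimension $< n$ and is thus contained in a proper closed subset $Z \subsetneq Y$. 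Replacing $U'$ with $U' \setminus Z$, for every $y$ in this further shrunk open set each of the $k$ roots of $P(y,\cdot)$ lifts to a point of $f^{-1}(y)$, giving $|f^{-1}(y)| = k$ and completing the argument.
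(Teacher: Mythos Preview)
The paper does not give its own proof of this statement; it simply cites it as \cite[Proposition 7.16]{Harris} and uses it as a black box in the proof of \Cref{t:deg}. So there is nothing in the paper to compare against directly.

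Your argument is the standard one and is essentially what Harris does: identify the generic fibre cardinality with the degree $k=[\mathbb{C}(X):\mathbb{C}(Y)]$ via a primitive element and its minimal polynomial. The overall structure is sound. Two places deserve a little more care. First, the claim that a \emph{single} generic linear form $\alpha$ separates the points of $f^{-1}(y)$ for all $y$ in a non-empty open subset of $U$ needs a dimension count on the incidence variety $\{(\alpha,x,x'): f(x)=f(x'),\ x\neq x',\ \alpha(x)=\alpha(x')\}$; this is routine but should be stated rather than asserted. Second, for the lower bound you implicitly use that the hypersurface $\{P=0\}\subset Y\times\mathbb{C}$ is irreducible (so that the closure of $g(X)$ is all of it, not just a component). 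This follows from irreducibility of $P$ in $\mathbb{C}(Y)[T]$ together with Gauss's lemma after clearing denominators over a suitable open subset of $Y$, but it is worth saying explicitly. With these two points filled in, the proof is complete.
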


\begin{proof}[Proof of \Cref{t:deg}]
	It is immediate that \ref{t:deg3} implies \ref{t:deg2}.
	Fix $U \subset Y$ to be the Zariski open set from \Cref{mumford}.
	An algebraic set is a finite set if and only if it is zero dimensional.
	Hence, \ref{t:deg1} and \ref{t:deg2} are equivalent.
	Suppose that $\dim X=\dim Y$.
	Fix $U' \subset U$ to be the non-empty Zariski open subset from \Cref{c:harris}.
	Define the set
	\begin{align*}
		C := \{x \in X : x \text{ is singular or } \rank \diff f(x) <n\}.
	\end{align*}
	By \Cref{borel91},
	$C$ is a proper Zariski closed subset of $X$.
	As $\dim C < \dim X = \dim Y$,
	the set $f(C)$ is not dense in $Y$.
	It now follows that the complement of the Zariski closure of $f(C)$ in $U$ is a non-empty Zariski open subset of $Y$.
	Hence, \ref{t:deg1} implies \ref{t:deg3},
	concluding the proof.
\end{proof}

\section{The effect of 0-extensions on realisation numbers}
\label{sec:0ext}

In this section we prove \Cref{l:0ext}.
The specific case where $d=2$ was originally proven in \cite{BorceaStreinu}.
We restrict to the non-spherical case throughout this section since the proof is almost identical.

\begin{lemma}\label{l:0extvec}
	Let $p_0,\ldots,p_d \in \mathbb{C}^d$ be affinely independent points where $[p_j]_k = 0$ for all $1 \leq j \leq k \leq d$.
	Then there exists exactly one point $x \in \mathbb{C}^d\setminus \{p_0\}$ which is a solution to the set of equations
	\begin{align}\label{eq:0ext1}
		\| x - p_j \|^2 = \| p_0 - p_j \|^2, \qquad j \in \{1,\ldots,d\}.
 	\end{align}
\end{lemma}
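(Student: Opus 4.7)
The plan is to reduce the system to linear equations by exploiting the triangular coordinate structure of the $p_j$'s, and then solve a resulting one-variable quadratic.

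First I would unpack the coordinate hypothesis. Taking $j=1$ in $[p_j]_k=0$ gives $[p_1]_k=0$ for all $k$, so $p_1=\mathbf{0}$; and for each $j\geq 2$ the point $p_j$ lies in $\mathbb{C}^{j-1}\times\{0\}^{d-j+1}$. In particular all of $p_1,\ldots,p_d$ lie inside the hyperplane $H:=\mathbb{C}^{d-1}\times\{0\}$. Since $p_0,p_1,\ldots,p_d$ are affinely independent and $p_1=\mathbf{0}$, the $d$ vectors $p_0,p_2,\ldots,p_d$ are linearly independent, so they form a basis of $\mathbb{C}^d$. Consequently $p_2,\ldots,p_d$ are $d-1$ linearly independent vectors lying in the $(d-1)$-dimensional space $H$, so they span $H$; moreover $[p_0]_d\neq 0$ (otherwise $p_0\in H$ as well, contradicting the basis statement).

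Next I would linearise the system \eqref{eq:0ext1}. Because $p_1=\mathbf{0}$, the $j=1$ equation reads simply $\|x\|^2=\|p_0\|^2$. Expanding the $j\geq 2$ equations as $\|x\|^2-2x\cdot p_j=\|p_0\|^2-2p_0\cdot p_j$ and subtracting the $j=1$ relation yields the linear system
\begin{equation*}
(x-p_0)\cdot p_j=0,\qquad j=2,\ldots,d.
\end{equation*}
Writing $y:=x-p_0$, the previous bullet shows $y\cdot v=0$ for every $v\in H$ (by bilinearity of the form $\cdot$ together with $p_2,\ldots,p_d$ spanning $H$). This forces $[y]_1=\cdots=[y]_{d-1}=0$, so $y=t\,e_d$ for a single scalar $t\in\mathbb{C}$.

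Finally I would plug $x=p_0+te_d$ back into $\|x\|^2=\|p_0\|^2$. Since $\|p_0+te_d\|^2-\|p_0\|^2=2t[p_0]_d+t^2$, the remaining equation is the scalar quadratic $t(2[p_0]_d+t)=0$. Because $[p_0]_d\neq 0$, its two roots $t=0$ and $t=-2[p_0]_d$ are distinct; the first recovers $x=p_0$ (excluded), and the second yields the unique point $x=p_0-2[p_0]_d\,e_d$, which is the reflection of $p_0$ across $H$.

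The only step that uses the hypotheses nontrivially is the spanning claim $\langle p_2,\ldots,p_d\rangle=H$ together with $[p_0]_d\neq 0$; this is where affine independence is essential, and it is really the only potential obstacle — once established, the remaining argument is a short algebraic computation.
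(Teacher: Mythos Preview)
Your proof is correct and complete; the linearisation step, the spanning argument for $H$, and the final quadratic are all sound.

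It is, however, a genuinely different route from the paper's. The paper argues as follows: since any solution $x$ yields two ordered point sets $\{x,p_1,\ldots,p_d\}$ and $\{p_0,p_1,\ldots,p_d\}$ with identical pairwise squared distances, there is (by a cited result on complex congruences) a matrix $M\in O(d,\mathbb{C})$ fixing each $p_j$ and sending $p_0$ to $x$; the triangular structure of $p_1,\ldots,p_d$ then forces $M$ to be either the identity or the diagonal reflection $\diag(1,\ldots,1,-1)$. Your argument instead reduces the system by direct algebra to the scalar quadratic $t(2[p_0]_d+t)=0$ and reads off the unique nontrivial root. Your approach is more self-contained (it avoids invoking the external congruence result, which is itself somewhat delicate over $\mathbb{C}$) and yields the explicit formula $x=p_0-2[p_0]_d e_d$; the paper's approach is more conceptual, identifying the second solution immediately as the orbit of $p_0$ under the stabiliser of $p_1,\ldots,p_d$ in $O(d,\mathbb{C})$. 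Both end at the same geometric picture: the unique second solution is the reflection of $p_0$ across the hyperplane $H$.
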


\begin{proof}
	First note that we must have that $[p_0]_d \neq 0$ for $p_0,\ldots,p_d \in \mathbb{C}^d$ to be affinely independent.
	Let $x \in \mathbb{C}^d$ be a solution to the equations in \cref{eq:0ext1}.
	Then the set of points $\{x,p_1,\ldots,p_d\}$ have the same set of pairwise distances between them as the set of points $\{p_0,p_1,\ldots,p_d\}$.
	Hence, there exists a map $M \in O(d,\mathbb{C})$, where $Mp_j=p_j$ for each $j \in \{1,\ldots,d\}$ and $Mp_0 = x$;
	see \cite[Section 10]{Gortler2014} for more details.
	Since $M$ is invariant over $p_1,\ldots,p_d$, it follows that $M$ is either the identity matrix or $M$ is the diagonal matrix with $M_{jj}=1$ for each $j<d$ and $M_{dd}=-1$.
	The result now follows immediately.
\end{proof}

\begin{proof}[Proof of \Cref{l:0ext}]
	For the linear spaces $X_{G,d}$ and $X_{G',d}$,
	fix the vertices $v_1,\ldots,v_d \in V$ to be the vertices adjacent to the new vertex $u$ that is appended during the 0-extension operation.	
	Fix a general realisation $p \in X_{G,d}$ of $G$.
	For each $q \in \tilde{f}_{G,d}^{-1}(f_{G,d}(p))$,
	define the dominant morphism
	\begin{align*}
		f_q : \mathbb{C}^d \rightarrow \mathbb{C}^d, ~ x \mapsto \left(\|x - q_{v_j}\|^2\right)_{j =1}^d
	\end{align*}
	and the non-empty Zariski open set $U_q \subset \mathbb{C}^d$ of points not contained in the affine span of $q_{v_1},\ldots,q_{v_d}$.
	Since each map $f_q$ is dominant,
	it follows from \Cref{mumford} that there exists a non-empty Zariski open set $U \subset \mathbb{C}^d$ such that 
	\begin{align*}
		U \subset \bigcap \left\{ f_q(\mathbb{C}^d) : q \in \tilde{f}_{G,d}^{-1}(f_{G,d}(p)) \right\}.
	\end{align*}
	From this we note that the set
	\begin{align*}
		U' := \bigcap \left\{ U_q \cap f_q^{-1}(U) : q \in \tilde{f}_{G,d}^{-1}(f_{G,d}(p)) \right\}
	\end{align*}
	is a non-empty Zariski open set.
	Hence, there exists a general realisation $p' \in X_{G',d}$ of $G'$ with $p'_v=p_v$ for all $v \in V$ and $p'_u \in U'$.
	By applying \Cref{l:0extvec} to every realisation in $X_{G',d}$ that is equivalent to $p'$,
	we see that
	\begin{align*}
		\left|\tilde{f}_{G',d}^{-1}\left(f_{G',d}(p') \right) \right| = 2\left|\tilde{f}_{G,d}^{-1}\left(f_{G,d}(p) \right) \right|.
	\end{align*}
	The result now follows from \Cref{l:xgd}.
\end{proof}

\end{document}